\documentclass[11pt]{amsart}

\usepackage{color}
\usepackage[pdftex]{graphicx}
\usepackage{todonotes}
\usepackage{indentfirst,pict2e}

\usepackage{amsfonts,amssymb}

\usepackage[leqno]{amsmath}
\usepackage{amsthm}
\usepackage{latexsym}

\def\docpdftitle{sln level 1 conformal blocks divisors on M0,n}
\usepackage[
	hyperindex,
	pagebackref,
	pdftex,
	pdftitle={\docpdftitle},
	pdfauthor={Maxim Arap, Angela Gibney, James Stankewicz, and David Swinarski},
	pdfdisplaydoctitle,
	pdfpagemode=UseNone,
	breaklinks=true,
	extension=pdf,
	bookmarks=false,
	plainpages=false,
	colorlinks,
	linkcolor=linkblue,
	citecolor=linkblue,
	urlcolor=linkblue,
	pdfmenubar=true,
	pdftoolbar=true,
	pdfpagelabels,
	pdfpagelayout=SinglePage,
	pdfview=Fit,
	pdfstartview=Fit
]{hyperref}

\hypersetup{pdftitle={sl\_n level 1 conformal blocks divisors on M\_0,n}}

	\definecolor{linkred}{rgb}{0.7,0.2,0.2}
	\definecolor{linkblue}{rgb}{0,0.2,0.6}

\newcommand{\myneturltilde}[2]{\href{#1}{\color{linkblue}{#2}}}

\newcommand{\neturl}[1]{\href{#1}{{\sffamily{\texttt{#1}}}}}
\newcommand{\neturltilde}[2]{\href{#1}{{\sffamily{\texttt{#2}}}}}

\usepackage[backrefs,msc-links,nobysame]{amsrefs}

\makeatletter

\def\bib@div@mark#1{%
 \@mkboth{{#1}}{{#1}}%
	}
\def\print@backrefs#1{%
 \space\SentenceSpace$\leftarrow$\csname br@#1\endcsname
}
\catcode`\'=11 
\renewcommand{\PrintAuthors}[1]{%
 \ifx\previous@primary\current@primary
  \sameauthors\@empty
 \else
  \def\current@bibfield{\bib'author}%
		  \PrintNames{}{}{\scshape #1}%
 \fi
}
\catcode`\'=12
\def\MRhref#1#2{%
 \begingroup
  \parse@MR#1 ()\@empty\@nil%
  \href{\MR@url}{\texttt{\@tempd\vphantom{()}}}%
  \ifx\@tempe\@empty
  \else
   \ \href{\MR@url}{\texttt{(\@tempe)}}%
  \fi
 \endgroup
}%
\def\MR#1{%
 \relax\ifhmode\unskip\spacefactor3000 \space\fi
 \begingroup
  \strip@MRprefix#1\@nil
  \edef\@tempa{\@nx\MRhref{MR\@tempa}{\@tempa}}%
 \@xp\endgroup
 \@tempa
}
\makeatother

\numberwithin{equation}{section}

\newtheoremstyle{claim}
{}
{}
{\itshape}
{}
{\bfseries}
{}
{1em}
{}
\theoremstyle{claim}
\newcounter{letteredtheoremcounter}

\newtheorem{theorem}[equation]{Theorem}
\newtheorem{letteredtheorem}[letteredtheoremcounter]{Theorem}
\newtheorem{theorem*}{Theorem}
\newtheorem{definition}[equation]{Definition}

\newtheorem{lemma}[equation]{Lemma}
\newtheorem{corollary}[equation]{Corollary}

\newtheorem{proposition}[equation]{Proposition}

\newtheoremstyle{explain}
{}
{}
{}
{}
{\scshape}
{}
{.5em}
{}
\theoremstyle{explain}
\newtheorem{example}[equation]{Example}
\newtheorem{remark}[equation]{Remark}

\newcommand{\C}{\mathbb{C}}
\newcommand{\R}{\mathbb{R}}
\newcommand{\Z}{\mathbb{Z}}
\newcommand{\N}{\mathbb{N}}
\newcommand{\Q}{\mathbb{Q}}

\newcommand{\Pro}{\mathbb{P}}

\newcommand{\w}{{\mathbf{w}}}
\newcommand{\dblq}{{/\!/}}

\newcommand{\M}{\overline{\mathcal{M}}} 
\newcommand{\Mgn}{\overline{\mathcal{M}}_{g,n}}
\newcommand{\F}{F}

\newcommand{\xout}[1]{\widehat{#1}}

\newcommand{\Nef}{\operatorname{Nef}}

\newcommand{\SymNef}{\operatorname{SymNef}}
\newcommand{\Ghat}{\widehat{\mathcal{G}}}
\newcommand{\familyG}{\mathcal{G}}
\newcommand{\NS}{\operatorname{NS}}
\newcommand{\Pic}{\operatorname{Pic}}

\newcommand{\sL}{\mathfrak{sl}}
\newcommand{\SL}{\operatorname{SL}}
\newcommand{\VV}[3]{\mathbb{V}(#1;#2;#3)}


\setlength{\oddsidemargin}{0in}     
\setlength{\evensidemargin}{0in}    
\setlength{\topmargin}{-.5in}       
\setlength{\footskip}{.5in}
\setlength{\textwidth}{6.5in}
\setlength{\textheight}{9in}


\begin{document}

\pagenumbering{arabic}

\title{$\sL_n$ level 1 conformal blocks divisors on $\M_{0,n}$}

\author{Maxim Arap}
\email{arapmv@math.uga.edu}

\author{Angela Gibney}
\email{agibney@math.uga.edu}

\author{James Stankewicz}
\email{jstankew@math.uga.edu}

\author{David Swinarski}
\email{davids@math.uga.edu}

\subjclass[2000]{Primary 14D21, 14E30 \\Secondary 14D22, 81T40}
\keywords{moduli space, vector bundles, conformal field theory}

\date{\today}

\begin{abstract}We study a family of  semiample
 divisors on the moduli space $\M_{0,n}$ that come from the theory of
conformal blocks for the Lie algebra $\sL_n$ and level 1.  The divisors we study are invariant under the action of $S_n$ on $\M_{0,n}$.  We compute their classes and
prove that they generate extremal rays in the cone of symmetric nef
divisors on $\M_{0,n}$. In particular,
these divisors define birational contractions of $\M_{0,n}$, which we show
factor through reduction morphisms to moduli spaces of weighted
pointed curves defined by Hassett.
\end{abstract}

\maketitle

\section{Introduction}
The moduli spaces $\M_{g,n}$ of Deligne-Mumford pointed stable curves of genus $g$ are central objects of study in algebraic geometry and mathematical physics.  
Given a simple Lie algebra $\mathfrak{g}$, a positive integer $\ell$ called the {\em{level}},  and an $n$-tuple of dominant integral weights $\w = (\lambda_{1},\ldots,\lambda_{n})$ with each $\lambda_{i}$ of level $ \leq \ell$, the WZW model of conformal field theory can be interpreted as defining vector bundles on $\M_{g,n}$ whose fibers are the so-called vector spaces of conformal blocks.  These vector bundles were first constructed by Tsuchiya, Ueno, and Yamada \cite{Ueno}; their ranks are computed by the famous Verlinde formulae \cite{Verlinde}, \cite{Faltings}.

 In this work, we study the divisors $D^n_{1,\w}$ on $\M_{0,n}$ associated to the conformal blocks bundles given by $\sL_{n}$, level 1, and weights $\w = (\omega_{j_1},\ldots,\omega_{j_n})$.  Here the $\omega_i$ denote the fundamental dominant weights.  Fakhruddin showed that for $\sL_n$ level 1, conformal blocks vector bundles have rank 1 \cite{Fakh}*{5.2.5}.   If the set of weights $\w=(\omega_{j},\ldots, \omega_j)$ is $S_n$-symmetric, then we write $D^n_{1,j}$ for the corresponding divisor.

Conformal blocks bundles on $\M_{0,n}$ are globally generated (\cite{Fakh}*{Lemma 2.2}) and hence they define morphisms to projective varieties $\M_{0,n} \rightarrow X$.  The morphisms determined by the conformal blocks bundles given by  $\sL_{2}$, level $\ell$, and weights $\w$ were shown by Fakhruddin to factor through the contraction maps from $\M_{0,n}$ to the weighted pointed moduli spaces defined by Hassett (\cite{Fakh}*{Prop. 4.7}).  We prove an analogous statement for conformal blocks bundles given by  $\sL_{n}$, level $1$, and weights $\w$.   Namely, in Lemma \ref{slnHassettSpace} we identify what we call the \emph{minimal Hassett space $\M_{0,\mathcal{A}}$ determined by $D^n_{1,\w}$}, and we prove the following result about the associated contractions (see Section \ref{slnmorphisms}).

 \begin{letteredtheorem} \label{thmA} Let $\M_{0,\mathcal{A}}$ be the minimal Hassett space determined by $D^n_{1,\w}$.  Then the morphism $\M_{0,n} \rightarrow X$ associated to the divisor $D^n_{1,\w}$ factors through the natural birational contraction $\rho_{\mathcal{A},N}: \M_{0,n} \rightarrow \M_{0,\mathcal{A}}$ defined by Hassett.     
\end{letteredtheorem}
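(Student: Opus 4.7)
The plan is to apply the standard rigidity principle for morphisms defined by semiample divisors: since $D^n_{1,\w}$ is semiample by \cite{Fakh}*{Lemma 2.2}, the induced morphism $\phi \colon \M_{0,n} \rightarrow X$ contracts a proper connected curve $C$ if and only if $D^n_{1,\w}\cdot C = 0$. To show that $\phi$ factors through $\rho_{\mathcal{A},N}$, it therefore suffices to show that every positive-dimensional fiber of $\rho_{\mathcal{A},N}$ is contracted by $\phi$, which in turn reduces to verifying the numerical statement that $D^n_{1,\w}\cdot C = 0$ for each curve $C$ in a fiber of $\rho_{\mathcal{A},N}$.

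First, I would give a concrete description of the fibers of $\rho_{\mathcal{A},N}$. The Hassett reduction contracts precisely those boundary strata parametrizing curves with a component whose marked points have total weight at most one; the positive-dimensional fibers of $\rho_{\mathcal{A},N}$ are rational varieties whose one-cycles are generated by the F-curves lying in the contracted boundary strata. Consequently, the task reduces to showing $D^n_{1,\w}\cdot F = 0$ for every F-curve $F$ that is contracted by $\rho_{\mathcal{A},N}$.

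Next, I would evaluate $D^n_{1,\w}\cdot F$ for each such F-curve using the Fakhruddin intersection/factorization formulas for conformal blocks divisors on $\M_{0,n}$. On a given F-curve $F_{N_1,N_2,N_3,N_4}$ the intersection number is a sum, indexed by triples of fundamental weights assigned to the three internal nodes, whose summands are products of ranks of $\sL_n$ level $1$ conformal blocks bundles on four-pointed genus zero spaces. Since these ranks are either $0$ or $1$, governed by the congruence condition on sums of weight indices modulo $n$, the vanishing condition $D^n_{1,\w}\cdot F = 0$ is an entirely combinatorial condition on the partition $(N_1,N_2,N_3,N_4)$ and the weights $\w$.

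The final and most delicate step is to check that this combinatorial vanishing condition matches precisely the contraction criterion of $\rho_{\mathcal{A},N}$, i.e.\ that the minimal weights $\mathcal{A}$ identified in Lemma \ref{slnHassettSpace} are calibrated so that $\rho_{\mathcal{A},N}$ contracts exactly those F-curves on which the Fakhruddin computation vanishes. This matching is the main obstacle: it requires a careful case analysis based on the partition sizes $|N_k|$ and the distribution of the fundamental weights $\omega_{j_i}$ among them, and it is where the specific choice of $\mathcal{A}$ from Lemma \ref{slnHassettSpace} plays its essential role. Once the combinatorics are aligned, rigidity produces the desired factorization $\phi = \psi \circ \rho_{\mathcal{A},N}$.
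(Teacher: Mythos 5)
Your overall strategy is the same as the paper's: reduce the factorization to the statement that $D^n_{1,\w}\cdot F=0$ for every $\F$-curve $F$ contracted by $\rho_{\mathcal{A},N}$ (the paper cites Fakhruddin's Lemma 4.6 for the fact that every effective curve contracted by $\rho_{\mathcal{A},N}$ is a combination of contracted $\F$-curves), and then verify this vanishing with Fakhruddin's Proposition 5.2. However, your final step is stated incorrectly in a way that would derail the argument. You propose to show that $\rho_{\mathcal{A},N}$ contracts \emph{exactly} those $\F$-curves on which $D^n_{1,\w}$ vanishes. That biconditional is false in general: as the paper notes (Remark \ref{Giansiracusa}), the divisors $D^n_{1,j}$ typically contract strictly more curves than $\rho_{\mathcal{A},N}$ does (e.g. $D^n_{1,2}$ maps $\M_{0,n}$ all the way down to $(\Pro^1)^n\dblq\SL_2$). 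Only the one inclusion --- contracted by $\rho_{\mathcal{A},N}$ implies degree zero against $D^n_{1,\w}$ --- is needed for the factorization, and attempting the converse would fail.

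Moreover, the "careful case analysis based on the partition sizes and the distribution of the weights" that you flag as the main obstacle is in fact a two-line computation, and you should carry it out rather than defer it. Order the blocks so that $|N_1|\le|N_2|\le|N_3|\le|N_4|$. The Hassett contraction criterion says $F_{N_1,N_2,N_3,N_4}$ is contracted precisely when $\sum_{i\in N_1\cup N_2\cup N_3}\tfrac{j_i}{n}\le 1$. With $\nu_k$ the residue mod $n$ of $\sum_{i\in N_k}j_i$, this gives $\nu_1+\nu_2+\nu_3\le n$, while $\nu_4<n$ by definition; hence $\sum_k\nu_k<2n$, and Proposition \ref{Fakh5.2} forces $D^n_{1,\w}\cdot F=0$. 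This is exactly where the choice of weights $\mathcal{A}=(\tfrac{j_1}{n},\dots,\tfrac{j_n}{n})$ in Definition \ref{slnHassettSpace} is used, and no further case analysis is required.
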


In Proposition \ref{nclass} we give the classes of the divisors $D^n_{1,j}$, and we learn more about their associated morphisms by finding their position in the cone of nef divisors on $\M_{0,n}$.  Recall that a divisor $D$ on a proper variety $X$ is \emph{nef} if $D$ nonnegatively intersects every curve on $X$. The set of classes of nef divisors on $X$ generates a closed cone $\Nef(X)$ in the N\'eron-Severi  space $\NS(X)_{\R}$. 
  (Note that on $\M_{0,n}$, algebraic and rational equivalence are the same, so $\Pic(\M_{0,n})_{\Q} \cong \NS(\M_{0,n})_{\Q}$, \cite{KeelIntersection}.)   The divisors $D^{n}_{1,j}$  are $S_n$-invariant, and are elements of  the cone of symmetric nef divisors, $\SymNef(\M_{0,n}) = \Nef(\M_{0,n}) \cap \Pic(\M_{0,n})_{\R}^{S_n}$.  It is known that $S_n$-invariant nef divisors on $\M_{0,n}$ are big (cf. \cite{KeelMcKernanContractible}*{Thm. 1.3(2)}, \cite{GibneyNumerical}*{Prop. 4.5}), and so the associated morphisms are birational.     

\begin{letteredtheorem}\label{extremal} 
The divisors $D^n_{1,j}$ span extremal rays of $\SymNef(\M_{0,n})$ for $j=2$, $\ldots$, $\lfloor \frac{n}{2}\rfloor$.
\end{letteredtheorem}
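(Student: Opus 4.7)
The plan is to show, for each fixed $j \in \{2, \ldots, \lfloor n/2 \rfloor\}$, that the face of $\SymNef(\M_{0,n})$ containing $[D^n_{1,j}]$ is one-dimensional. Since $\Pic(\M_{0,n})^{S_n}_\R$ has dimension $\lfloor n/2 \rfloor - 1$ (with basis the symmetrized boundary classes $B_2, \ldots, B_{\lfloor n/2 \rfloor}$), and $F$-curves pair nonnegatively against every class in $\SymNef$, it suffices to exhibit $\lfloor n/2 \rfloor - 2$ symmetric $F$-curves on which $D^n_{1,j}$ has degree zero and whose classes are linearly independent in the symmetric $N_1$: the face of $\SymNef$ cut out by these curves then has dimension at most $1$, and contains the nonzero class $[D^n_{1,j}]$, so it is exactly the ray spanned by $[D^n_{1,j}]$.

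First I would make explicit the intersection number $D^n_{1,j} \cdot F_{(n_1, n_2, n_3, n_4)}$ for every $4$-part partition $n = n_1 + n_2 + n_3 + n_4$, using Fakhruddin's formula applied to $\sL_n$ at level $1$ with weights $(\omega_j, \ldots, \omega_j)$. Because level-$1$ fusion for $\sL_n$ is computed by addition of fundamental weights modulo $n$, the vanishing locus of this intersection number should reduce to a clean condition on the residues of $j\, n_i$ modulo $n$; this supplies a large pool of $F$-curves that can be mined for linear independence.

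Next I would use Theorem \ref{thmA} as a conceptual guide: every $F$-curve contracted by the Hassett reduction $\rho_{\mathcal{A}, N}$ pairs to zero with $D^n_{1,j}$, and the weight data defining $\M_{0,\mathcal{A}}$ suggests natural candidate families, for instance $F$-curves of shape $(1, 1, k, n-k-2)$ and analogous families with one large part and three small parts adapted to $j$. From this supply I would select $\lfloor n/2 \rfloor - 2$ symmetric $F$-curves and, using the class formula from Proposition \ref{nclass} together with the standard intersection numbers of $F$-curves against symmetric boundary, assemble a $(\lfloor n/2 \rfloor - 2) \times (\lfloor n/2 \rfloor - 1)$ integer matrix. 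The final step is to verify that this matrix has full rank; with a triangular choice of families parametrized by one integer, the matrix should be in echelon form up to reordering, making the rank check immediate.

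The main obstacle will be the combined choice in step two: one must select $F$-curves whose zero-intersection with $D^n_{1,j}$ can be read off from the congruences of step one \emph{and} whose pairings against $B_2, \ldots, B_{\lfloor n/2 \rfloor}$ assemble into a full-rank matrix, uniformly in $j$ and $n$. The case $j = n/2$ for even $n$, and boundary cases such as $j = 2$, are likely to require a slightly different family than the generic range, so some case analysis is to be expected; otherwise the argument reduces to a concrete but manageable combinatorial calculation, guided throughout by the factorization through $\M_{0,\mathcal{A}}$ that Theorem \ref{thmA} provides.
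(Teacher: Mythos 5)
Your overall strategy is the paper's: since $\dim \Pic(\M_{0,n})^{S_n}_{\R}=g=\lfloor n/2\rfloor-1$, one exhibits $g-1$ linearly independent symmetric $\F$-curves meeting $D^n_{1,j}$ in degree zero, and extremality of the ray follows because $D^n_{1,j}$ is nef and every class in $\SymNef$ pairs nonnegatively with $\F$-curves. Your first step (extracting the vanishing condition from Fakhruddin's level-one formula as a congruence condition on the residues $jn_i \bmod n$) and your choice of candidate curves (the curves $F_{i,1,1}$, replaced where necessary by curves with small parts adapted to $j$, namely $F_{i,k,k}$ with $k=\lfloor n/j\rfloor$) also match the paper.

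The genuine gap is in your final step. The assertion that ``with a triangular choice of families parametrized by one integer, the matrix should be in echelon form up to reordering, making the rank check immediate'' is false, and this is exactly where the paper's real work lies. The rows coming from the substituted curves $F_{i,k,k}$, when expanded in the basis $\{F_{q,1,1}\}$ (or equivalently paired against $B_2,\ldots,B_{g+1}$), have long supports with entries on both sides of the diagonal growing linearly in $k$ --- see Propositions \ref{case 1} and \ref{case 2} --- so no reordering makes the matrix triangular. The paper must first derive a general change-of-basis formula for $F_{a,b,c,d}$ in terms of the $F_{j,1,1}$ (Proposition \ref{gamma j}, requiring the inverse intersection matrix of Lemma \ref{inverseintersectionmatrixlemma}), then analyze the combinatorial pattern in which the $k,k$-curves occur (the $(k-1)$ and $(k-2)$ pairs of Proposition \ref{k geq 3 pairs}, with separate treatment of how the family begins and ends), and finally prove the determinant is nonzero by setting up recurrences for the entries produced by successive row reductions and establishing sign and monotonicity properties of the resulting sequences; the case $k=2$ alone splits into thirteen subcases. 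Your case analysis instinct is pointed at the wrong place: $j=2$ and $j\mid n$ are the \emph{easy} cases (there the matrix really is close to the identity), while the generic range $r=n-jk>0$ is where the nontrivial determinant computation is unavoidable. As written, your plan would stall at the rank verification without the additional machinery.
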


To prove Theorem \ref{extremal}, for each divisor $D^{n}_{1,j}$ we define a
family of rational curves $\mathcal{G} = \mathcal{G}^{n}_{1,j}$
(cf. Definitions \ref{k geq 3 family} and \ref{k=2 family}).    Given $j \in \{2, \ldots, \lfloor \frac{n}{2} \rfloor\}$, the family $\mathcal{G}^{n}_{1,j}$ consists of $\lfloor \frac{n}{2} \rfloor -1$  independent effective curves (thought of as classes in $H_2(\M_{0,n},\Q)$)  each of which  intersects  the divisor $D^{n}_{1,j}$ in degree zero.  In fact, the curves in $\mathcal{G}^{n}_{1,j}$ are \emph{$\F$-curves} (cf. \ref{curvessection}), which are special, combinatorially defined rational curves in the moduli space $\M_{0,n}$.  

In order to show that the curves in $\mathcal{G} =
\mathcal{G}^{n}_{1,j}$ are independent, which is the hardest part of
this paper, we use a convenient basis of $\F$-curves (cf. Proposition
\ref{C1}).   In Proposition \ref{gamma j},  we give a formula for
expressing any $\F$-curve in that basis.  This formula may be a useful
tool for other purposes. 

We now say a few words about how we came to study the family of divisors $\{ D^{n}_{1,j}\}$.  

Fakhruddin has recently found recursive formulas for the classes of general conformal blocks divisors on $\M_{0,n}$ \cite{Fakh}.  Swinarski implemented these formulas in a free and open source  \texttt{Macaulay 2} \cite{Macaulay} package called \texttt{ConfBlocks} \cite{ConfBlocks}, which 
can compute classes of conformal blocks divisors for many different Lie algebras and levels.  After some experimentation using the programs \texttt{ConfBlocks}, \texttt{NefWiz} \cite{NefWiz}, and \texttt{polymake} \cite{Polymake}, our group noticed that the divisors $D^{n}_{1,j}$ often spanned extremal rays of the nef cone.  

We also observed that many of the divisors $D^{n}_{1,j}$ lie in a part of the nef cone that was not previously well understood.   More precisely: the Ray Theorem of Keel and McKernan \cite{KeelMcKernanContractible} and its extension by Farkas and Gibney \cite{FarkasGibney} gives good understanding of the cone in $\NS(\M_{0,n})_{\R}$ generated by log canonical divisors.  If a divisor $D$ is log canonical and intersects every $\F$-curve nonnegatively, then $D$ is nef, and by \cite{BCHM}, semiample.   However, many of the divisors $D^{n}_{1,j} $ are not log canonical, and hence proving that they are nef and semiample requires different methods.    

We were also able to identify the images of the morphisms associated to some of the divisors $D^{n}_{1,j}$.  For instance, if $n$ is even, $D^{n}_{1,2}$ is the pullback of $\lambda$ along a map $\M_{0,n} \rightarrow \M_g$, and hence this linear series maps $\M_{0,n}$ to the Satake compactification $A_{g}^{*}$ \cite{AGS}.  The image of the morphism associated to the divisor $D^{n}_{1,n/2}$ is the GIT quotient $(\Pro^{1})^{n} \dblq \SL(2)$ with the symmetric linearization \cite{AGS}.  These examples piqued our interest in understanding the geometry of the morphisms associated to the divisors $D^{n}_{1,j}$; these morphisms have been studied further by Giansiracusa (cf.  Remark \ref{Giansiracusa} and \cite{Giansiracusa}).

In a related paper, Alexeev, Gibney, and Swinarski study conformal blocks divisors $D^{2}_{\ell,1}$ for $\sL_2$, level $\ell$, and weights $(1,\ldots,1)$ when $n$ is even \cite{AGS}.  We compare and contrast that paper and this one.

One aspect of the divisors in the families $\{ D^{n}_{1,j} \}$ and $\{ D^{2}_{\ell,1} \}$ that makes them amenable to study is that we have good understanding of the ingredients which are needed to use Fakhruddin's formulas for Chern classes.  Specifically, the \emph{fusion rules}, which give the ranks of the conformal blocks for $n=3$ marked points, are combinatorial and relatively easy if $\mathfrak{g}=\sL_2$ or if $\mathfrak{g}=\sL_k$ and $\ell=1$.  For general simple Lie algebras and levels, the fusion rules can be much more complicated, making it correspondingly more difficult to obtain general formulas.


In order to study the $\sL_2$ divisors in \cite{AGS}, we needed to find a useful formula for the intersection of the divisors with $\F$-curves.  This required application of conformal blocks tools such as factorization.  On the other hand, for the $\sL_n$ level $1$ divisors, Fakhruddin gives a combinatorial formula for the intersection of the divisors with $\F$-curves \cite{Fakh}*{5.2}.  

In \cite{AGS} it is shown that the $\sL_2$ divisors $D^{2}_{\ell,1}$ lie on the boundary of the symmetric nef cone, and that for all $n \geq 10$, four of these divisors span extremal rays of the symmetric nef cone.  It was relatively easy to find sets of curves which  intersect these divisors in degree zero and to prove independence of these families.  In contrast, all of the $\sL_n$ divisors $D^{n}_{1,j}$, $j=2,\ldots,\lfloor n/2 \rfloor$ span extremal rays of the symmetric nef cone.  Finding families of curves which these divisors intersect in degree zero and proving the independence of these families of curves was considerably more difficult in the $\sL_n$ case.  So, although the divisors considered in the present paper and in \cite{AGS} are closely related, the types of results we needed to prove were different for the two families.

\textit{Outline of the paper:}  In Section \ref{background} we establish the notation and conventions we will follow, giving basic definitions and references about conformal blocks bundles and $\M_{0,n}$.  In Section \ref{slnmorphisms} we prove Theorem \ref{thmA} on factoring morphisms through Hassett spaces; this section may be may be read independently of the rest of the paper.  In Section \ref{newtools} we introduce some new tools for working with curves and divisors on $\M_{0,n}$; the main result here is that we invert the matrix of intersection numbers between natural bases of symmetric divisors and symmetric curves on $\M_{0,n}$.  This allows us to write divisors in the natural basis given their intersection numbers with $\F$-curves.  Likewise, we may write an arbitrary symmetric $\F$-curve class in the natural basis of symmetric curve classes.  In Section \ref{divisor classes} we give explicit formulas for the intersection numbers and divisor classes of $D^{n}_{1,j}$.   In Section \ref{special cases section} we show that $D^{n}_{1,j}$ is extremal in the symmetric nef cone if $j=1, 2,3,4$ or if $j | n$.  In Section \ref{k geq 3 structure} we define and study the family $\mathcal{G}$ which is used to show the extremality of $D^{n}_{1,j}$ when $\lfloor n/j \rfloor \geq 3$, and in Section \ref{k geq 3 independence} we prove the extremality of $D^n_{1,j}$ in this case.  We outline a parallel analysis for the case $\lfloor n/j \rfloor =2$ in Sections \ref{k=2 structure} and \ref{k=2 independence}.  Examples and the \texttt{Macaulay 2} code used to compute them are provided in each case; see  \neturltilde{http://www.math.uga.edu/~davids/agss/index.html}{http://www.math.uga.edu/$\sim$davids/agss/index.html} or follow the hyperlinks in this .pdf file.

\subsection*{Acknowledgements}
This project was initiated during Spring Semester 2009 under the Vertical Integration
of Research and Education (VIGRE) Program sponsored by the National
Science Foundation (NSF) at the Department of Mathematics at the University
of Georgia (UGA). We would like to acknowledge the NSF VIGRE grant DMS-03040000
 for partial financial support of this project.  In addition to the authors, the Spring 2009 UGA VIGRE Algebraic Geometry group included Tyler Kelly, David Krumm, Lev Konstantinovsky and Brandon Samples.  We would also like to acknowledge  the participants of the UGA Conformal Blocks Seminar, which in addition to the authors included Valery Alexeev, Brian Boe, Bill Graham, Elham Izadi, and Robert Varley.  Finally we would like to thank Noah Giansiracusa, Daniel Krashen, Chris Manon, and Michael Thaddeus for helpful conversations.

\section{Background: Conformal blocks, \texorpdfstring{$\M_{0,n}$}{M\_0,n}, cones of divisors and curves}\label{background}
In this section we give the basic notation and definitions we use and give references on conformal blocks, the moduli space of curves and basic notions in birational geometry.  We claim no originality for any of the results discussed below; we hope that this information will be useful to algebraic geometers and conformal blocks readers.

\subsection{Conformal blocks bundles} \label{introsection}
 We refer the reader to  the papers \cite{Beauville}, \cite{Faltings}, \cite{Fakh}, \cite{Looijenga}, \cite{Ueno}, for details and proofs of the statements and facts about conformal blocks which we use in the sequel.

We begin with an informal description of the conformal blocks vector bundles.  Let $\mathfrak{g}$ be a Lie algebra.  Let $\ell \in \N$ be a positive integer, called the \textit{level}.  
Let $\w = (\lambda_{1}, \ldots, \lambda_{n})$ be an $n$-tuple of dominant integral weights in the Weyl alcove of level $\ell$ in the root system for $\mathfrak{g}$.

Let $\widehat{\mathfrak{g}}$ be the affine Lie algebra associated to $\mathfrak{g}$:   
\[  \widehat{\mathfrak{g}} = (\ \mathfrak{g} \otimes \C((z_{i})) )\oplus \C c.
\]

Just as in the case of finite-dimensional Lie algebras, to each weight $\lambda_{i}$ we may associate an irreducible  $\widehat{\mathfrak{g}}$-module $\mathcal{H}_{\lambda_{i}}$.  Let $\mathcal{H}_{\w}$ be the tensor product of these modules for the weights in $\w$.

We can describe  the fiber of the conformal blocks bundle $\VV{\mathfrak{g}}{\ell}{\w}$ over a point $(C, p_{1},\ldots, p_n) \in M_{g,n}$ as follows:  Let $U = C-\{p_{1}, \ldots, p_{n} \}$.  Choose a local coordinate $z_{i}$ near each $p_{i}$; this gives ring homomorphisms $\mathcal{O}(U) \rightarrow \C((z_{i}))$.

Define a $\mathfrak{g} \otimes \mathcal{O}(U)$ action on $\mathcal{H}_{\w}$ as follows:
\[   (X \otimes f) \cdot (v_{1} \otimes \cdots \otimes v_{n}) = \sum_{i=1}^{n}  v_{1} \otimes \cdots \otimes (X \otimes f_{p_{i}}) v_{i} \otimes \cdots \otimes v_{n}
\]
Then the fiber of $\VV{\mathfrak{g}}{\ell}{\w}$ over the point $(C, p_{1},\ldots, p_n)$ is the vector space of coinvariants of this $\mathfrak{g} \otimes \mathcal{O}(U)$ action on $\mathcal{H}_{\w}$.

While $\mathfrak{g} \otimes \mathcal{O}(U)$ and $\mathcal{H}_{\w}$ are infinite-dimensional, the vector space of coinvariants is finite-dimensional.  These fibers form an algebraic vector bundle on $M_{g,n}$.  (In particular, the dimension of the vector space of coinvariants does not depend on the curve $C$ or the points $\{ p_{i} \}$, but only on the genus $g$ and the number of marked points $n$.)

The construction can be extended to nodal curves, yielding an algebraic vector bundle $\VV{\mathfrak{g}}{\ell}{\w}$ on $\M_{g,n}$.  Here we record a few additional facts about these bundles which are most relevant to our paper. 

\begin{enumerate}
\item The fibers over nodal curves admit very specific direct sum decompositions known as the \emph{factorization rules}.
\item The vector bundle $\VV{\mathfrak{g}}{\ell}{\w}$ on $\Mgn$ admits a projectively flat connection.  
\item When $g=0$, the connection satisfies the KZ equations.  
\item When $g=0$, the vector bundle $\VV{\mathfrak{g}}{\ell}{\w}$ is globally generated, and hence its determinant line bundle is nef.
\end{enumerate}

\subsection{Symmetric divisors and curves on \texorpdfstring{$\M_{0,n}$}{M\_0,n}}

\subsubsection{Divisors on \texorpdfstring{$\M_{0,n}$}{M\_0,n}}
The symmetric group $S_{n}$ acts on $\M_{0,n}$ by permuting the order of the marked points.  In this paper we shall almost exclusively work with $S_n$-symmetric divisors, and so we take a moment to recall the basic tools for these.  A standard reference is \cite{KeelMcKernanContractible}.

The boundary of $\M_{0,n} $ is the locus  parametrizing nodal curves.  The irreducible components of the boundary are denoted $\Delta_{I,I^{c}}$ and are indexed by partitions of the set $\{1,\ldots,n\}$ into two subsets $I,I^{c}$ each of size at least two.  Then $\Delta_{I,I^{c}}$ is the closure of the locus in the moduli space parametrizing curves with two irreducible components meeting at a single node, having  points with labels in $I$ on one component, and points with labels in $I^c$ on the other component.  We write $\delta_{I}$ for the class of the divisor $\Delta_{I,I^{c}}$.  The boundary classes $\{ \delta_{I} \}$ span $\Pic(\M_{0,n})$; the relations between them can be found in \cite{KeelIntersection}.

Let $\Pic(\M_{0,n})_{\Q} :=  \Pic(\M_{0,n}) \otimes \Q$, and let $\Pic(\M_{0,n})_{\Q}^{S_{n}}$ denote the vector subspace of $S_{n}$-symmetric divisor classes on $\M_{0,n}$.    Let $2 \leq k \leq \lfloor \frac{n}{2} \rfloor$.  The divisors $B_{k} = \sum_{|I|=k} \delta_{I}$ are $S_{n}$-symmetric and form a basis of  $\Pic(\M_{0,n})_{\Q}^{S_{n}}$.  They span $\Pic(\M_{0,n})_{\Q}^{S_{n}}$ because they are the images of the $\Delta_I$ under symmetrization.  One easy way to see that they are independent is to compute their intersection numbers with curves in the moduli space; see for instance \ref{inverseintersectionmatrixlemma}.  

We define $g$ in the following way: If $n$ is even, then $g = n/2 -1$, so that $n=2g+2$.  If $n$ is odd, then $g=\lfloor n/2 \rfloor-1$, so that $n=2g+3$.    These choices for $g$ are convenient because we have $\dim \Pic(\M_{0,n})_{\Q}^{S_{n}} = g$ in both cases.   

\subsubsection{Curves on $\M_{0,n}$}\label{curvessection}  There is a set of rational curves on $\M_{0,n}$ called \textbf{$\F$-curves} which are especially useful and important.  Collectively, $\F$-curves sweep out the locus in $\M_{0,n}$ corresponding to curves having at least $n-4$ nodes.  They can be defined combinatorially in terms of their dual graphs.  Let $G$ be a tree with $n$ labelled leaves that is trivalent at every vertex except one, where it is 4-valent.  Then the locus  in $\M_{0,n}$ parametrizing curves with dual graph $G$ is an open rational curve, given by the cross ratio of the four special points on the 4-valent vertex.  The closure of this locus is called an $\F$-curve, in honor of both Faber and Fulton.  Deleting the 4-valent vertex of the graph $G$ partitions the leaves $\{1,\ldots,n\}$ into four nonempty subsets $I_{1}$, $I_{2}$, $I_{3}$, $I_{4}$, and the homology class of the $\F$-curve depends only on this partition.  We denote this $F_{I_1,I_2,I_3,I_4}$.  The classes of $\F$-curves span $H_{2}(\M_{0,n},\Q)$.

Our primary use of $\F$-curves will be in intersecting them with symmetric divisor classes on $\M_{0,n}$.    In particular,  we do not need to know the partition $\{1,\ldots,n\} = I_{1} \amalg I_{2} \amalg I_{3} \amalg I_{4}$; we only need to know the cardinalities $\#I_{1}$, $\#I_{2}$, $\#I_{3}$, $\#I_{4}$.  We call these four numbers the \emph{shape} of the partition. (Given two $\F$-curve classes whose partitions have the same shape, the intersection with the class of $D$ will be the same, since $D$ is symmetric.)    Thus, a partition $a+b+c+d=n$ of the integer $n$ into four positive integers determines an $\F$-curve class, up to $S_{n}$ symmetry.  We interpret the symbol $F_{a,b,c,d}$ as a weighted average of $\F$-curve classes over all partitions of shape $a,b,c,d$.  In the sequel we often omit the fourth index $d=n-a-b-c$ and write $F_{a,b,c} = F_{a,b,c,d}$.  

A key intersection formula we need is due to Keel and McKernan:
\begin{lemma}[\cite{KeelMcKernanContractible}*{Cor. 4.4}] \label{Keel McKernan formula} Let $D = \sum_{k=2}^{\lfloor \frac{n}{2} \rfloor} \alpha_{k} B_{k}$ be a symmetric divisor class on $\M_{0,n}$.  Let $a+b+c+d=n$ be a partition of $n$ into 4 positive parts.  Then 
\[   D \cdot F_{a,b,c,d} = -\alpha_{a}-\alpha_{b}-\alpha_{c}-\alpha_{d}+\alpha_{a+b}+\alpha_{a+c}+\alpha_{a+d}
\]
where if $t>\lfloor \frac{n}{2} \rfloor$, we define $\alpha_{t} = \alpha_{n-t}$.
\end{lemma}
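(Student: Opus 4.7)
The plan is to reduce the intersection to a sum over individual boundary divisors. Writing $B_k = \sum_{|I|=k} \delta_I$, I obtain $D = \sum_J \alpha_{|J|}\, \delta_J$ where $J$ runs over boundary partitions and the coefficient $\alpha_{|J|}$ is unambiguous via the symmetry $\alpha_t = \alpha_{n-t}$. The task then becomes computing $\delta_J \cdot F$ for each boundary divisor, where $F = F_{I_1, I_2, I_3, I_4}$ is a representative F-curve of shape $(a,b,c,d)$.

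The key step is to show that
\[
\delta_J \cdot F = \begin{cases} +1 & \text{if } \{J, J^c\} = \{I_a \cup I_b,\, I_c \cup I_d\} \text{ for some pairing of } \{1,2,3,4\}, \\ -1 & \text{if } J \text{ or } J^c \text{ equals some } I_a \text{ (necessarily with } |I_a| \geq 2\text{)}, \\ \phantom{+}0 & \text{otherwise.} \end{cases}
\]
I plan to verify this by viewing $F$ as the image of the attaching map $\phi : \M_{0,4} \to \M_{0,n}$ that glues four fixed trees with leaf sets $I_1, \ldots, I_4$ to the four marked points of $\M_{0,4}$, and computing $\deg \phi^*\delta_J$ case by case. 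If $J$ splits some $I_a$ nontrivially, the rigid structure of the subtree $T_a$ forces $F \cap \delta_J = \emptyset$, giving $0$. If $\{J, J^c\}$ matches a pairing, then $\phi^*\delta_J$ is supported at the unique corresponding boundary point of $\M_{0,4}$ with multiplicity one, giving $+1$.

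The delicate case is $J = I_a$: here $F \subset \delta_{I_a}$, so the intersection equals the degree of the restricted normal bundle. Using the product decomposition $\delta_{I_a} \cong \M_{0, I_a \cup \{q\}} \times \M_{0, I_a^c \cup \{q'\}}$ with self-intersection $\delta_{I_a}|_{\delta_{I_a}} = -\psi_q - \psi_{q'}$, and recognizing that $F$ lies in $\delta_{I_a}$ as $\{\mathrm{pt}\} \times \M_{0,4}$ (the first factor is constant because the subtree $T_a$ is rigid, while the second factor records the cross-ratio at the central $4$-valent vertex), only the $-\psi_{q'}$ term contributes. Since $\psi$-classes have degree $1$ on $\M_{0,4}$ and pull back naturally along the gluing map, this yields $\delta_{I_a} \cdot F = -1$.

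Summing the contributions then gives the formula: the three pairings contribute $\alpha_{a+b} + \alpha_{a+c} + \alpha_{a+d}$ (picking one side of each pairing and reindexing by $\alpha_t = \alpha_{n-t}$), and the up-to-four single-slot cases contribute $-\alpha_a - \alpha_b - \alpha_c - \alpha_d$ (with the convention $\alpha_1 = 0$ implicit to absorb the absence of $\delta_{I_a}$ when $|I_a| = 1$). The main obstacle is the case $J = I_a$: it requires recognizing that $F$ is contained in $\delta_{I_a}$, identifying precisely which factor of the product decomposition contains the nontrivial image of $F$, and tracking the $\psi$-class restriction correctly; the other two cases are essentially combinatorial.
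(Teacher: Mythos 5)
Your proposal is correct, but note that the paper does not prove this lemma at all: it is quoted from Keel--McKernan (their Cor.~4.4), so there is no internal argument to compare against. What you have written is the standard direct verification, and the three intersection numbers you assert ($+1$ for the three pairings, $-1$ when $J$ or $J^c$ equals some $I_a$ with $|I_a|\geq 2$, and $0$ otherwise) are exactly right, as is the final bookkeeping (the convention $\alpha_t=\alpha_{n-t}$ converts $\alpha_{c+d}$ into $\alpha_{a+b}$, and the implicit $\alpha_1=0$ absorbs the singleton parts). One small imprecision: in the ``$J$ splits some $I_a$ nontrivially'' case it is not always true that $F\cap\Delta_J=\emptyset$ --- if $J\subsetneq I_a$ happens to be an edge-cut of the particular tree $T_a$ you glued in, then $F$ is entirely \emph{contained} in $\Delta_J$. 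The intersection number is still $0$: either choose the trees $T_a$ so that $J$ is not an edge-cut (legitimate, since the class of $F_{I_1,I_2,I_3,I_4}$ depends only on the partition, not on the trees), or run the same excess-intersection computation you use for $J=I_a$ and observe that now both $\psi_q$ and $\psi_{q'}$ restrict to degree $0$, because the attaching point $q'$ sits inside a part of size at least $2$ of the induced F-curve rather than being one of the four special points on the moving component. With that caveat addressed, the argument is complete.
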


\subsubsection{Cones of curves and divisors}\label{symmetricFcone}
The cohomology ring of $\M_{0,n}$ is well-understood; it was first described by  Keel \cite{KeelIntersection}.  However the birational geometry of $\M_{0,n}$ is not completely understood and is known to be quite complicated.  An important open question in algebraic geometry is to determine the nef cone of $\M_{0,n}$.  

Recall that a divisor $D$ on a space $X$ is \emph{nef} if $D \cdot C \geq 0$ for every effective curve $C\subset X$. The nef cone is the closure of the ample cone --- those divisors that (after taking suitably large multiples) yield embeddings of $X$ in projective space.  

We define a cone $F_{0,n} \subset \Pic(\M_{0,n})_{\R}$ called the $\F$-cone as the cone of divisors which nonnegatively intersect the $\F$-curves:
\begin{equation}  F_{0,n} := \{ D \in \Pic(\M_{0,n})_{\R} \mid D \cdot F_{I_1,I_2,I_3,I_4} \geq 0 \mbox{ for all partitions $\{1,\ldots,n\} = I_{1} \amalg I_{2} \amalg I_{3} \amalg I_{4}$ } \}.
\end{equation}
In particular, every nef divisor must intersect the $\F$-curves nonnegatively, so we have an inclusion $\Nef(\M_{0,n}) \subset F_{0,n}$.  For $n\leq 7$, this inclusion is an equality, but at the time of this writing, it is not known whether this inclusion is an equality for $n >7$ \cite{Larsen}.

The cone $F_{0,n}$ is combinatorially defined but intractable for large values of $n$; the Picard number of $\M_{0,n}$ is $2^{n-1} - \binom{n}{2}-1$ \cite{KeelIntersection}*{p. 550}.  Thus for $n \geq 7$ it is not practical to experiment with the full cone $F_{0,n}$.  This is one reason we are led to consider a smaller cone: Let 
\begin{equation}
\widetilde{F}_{0,n} := F_{0,n} \cap \Pic(\M_{0,n})_{\R}^{S_n}.
\end{equation}
Then $\widetilde{F}_{0,n}$ contains the cone $\SymNef(\M_{0,n})$ of $S_n$-symmetric nef divisors, and is much simpler since its ambient space is only $g$-dimensional, and there are fewer partitions of integers $a+b+c+d=n$ than partitions of the set $\{1,\ldots,n\}$.

Studying $\widetilde{F}_{0,n}$ sheds light not only on $\M_{0,n}$, but also on $\M_{g}$: Gibney, Keel, and Morrison showed that the birational geometry of $\M_g$ is determined by the cone $\widetilde{F}_{0,g}$ associated to $\M_{0,g}$ \cite{GibneyKeelMorrison}.

In this paper we will study nef divisors that span extremal rays of the cone $\widetilde{F}_{0,n}$.  Recall that the ambient vector space $\Pic(\M_{0,n})_{\R}^{S_n}$ of $S_{n}$-symmetric divisors is $g$-dimensional.  Let $\mathcal{F}$ be a family of $g-1$ independent $\F$-curves $\mathcal{F}(i)$.  Then $\mathcal{F}$  determines a ray in $\Pic(\M_{0,n})_{\R}^{S_n}$ given by the intersection of the hyperplanes $\mathcal{F}(i) \cdot D = 0$.    If this ray is spanned by a divisor which intersects all the $\F$-curves nonnegatively, then this ray must be extremal in the $\F$-cone $\widetilde{F}_{0,n}$.  We also know that the divisors $D^{n}_{1,j}$, being conformal blocks divisors, are nef.  Thus, since they are nef and span extremal extremal rays of $\widetilde{F}_{0,n}$, they span extremal rays of $\SymNef(\M_{0,n})$.

Here is an example:

\begin{example}
Let $\mathcal{F} = \{ F_{1,1,j} \mid 1 \le j \le g-1 \}$. The independence of all the curves in the family $\mathcal{F}$ will be shown in Section \ref{newtools}.  

We will see in Section \ref{special cases section} that the divisor $D^n_{1,2}$ intersects every curve in $\mathcal{F}$ in degree zero.  Therefore, this divisor spans an extremal ray in the symmetric nef cone.
\end{example}

\section{\texorpdfstring{$\sL_n$ level $1$}{sl\_n level 1} conformal blocks and Hassett's spaces} \label{slnmorphisms}

 In this section we show that to any conformal blocks divisor $D^{n}_{1,\w}$, one may associate a  minimal Hassett space $\M_{0,\mathcal{A}}$ such that
the morphism defined by the linear series given by $D^{n}_{1,\w}$ factors through the birational contraction $\rho_{\mathcal{A},N}: \M_{0,n} \rightarrow \M_{0,\mathcal{A}}$.   When the weights are symmetric, we also show that the associated morphism factors through a particular log canonical model of $\M_{0,n}$.  Fakhruddin has proved an analogous statement for the conformal blocks divisors given by $\sL_{2}$ \cite{Fakh}*{Prop. 4.7}.

\textit{Notation.}  We refer to Hassett's paper \cite{HassettWeighted} for the definition of the moduli spaces of weighted pointed stable curves $\M_{0,\mathcal{A}}$ with weights $\mathcal{A}$.  We write $N= (1,\ldots,1)$.  In \cite{HassettWeighted}*{Theorem 4.1}, Hassett defines natural birational morphisms  $\rho_{\mathcal{A}, N}: \M_{0,n} \rightarrow \M_{0,\mathcal{A}}$ which may be described as follows:  given a point $[(C,p_1,\ldots,p_n)] \in \M_{0,n}$, its image under $\rho_{\mathcal{A}, N}$ is obtained by successively collapsing components of $C$ along which $K_{C}+\sum_{i=1}^n a_i p_i$ fails to be ample.

\begin{definition}\label{slnHassettSpace}  Let $\w=(\omega_{j_1},\ldots, \omega_{j_n})$ be an $n$-tuple of fundamental dominant weights of $\sL_n$ satisfying  $j_i \in \{1,\ldots,\lfloor \frac{n}{2} \rfloor\}$ for each $i \in \{1,\ldots,n\}$, and $\sum_{i=1}^{n} j_i \geq 2n$.  
Let $D^{n}_{1,\w}$ be the conformal blocks divisor for $\sL_{n}$, level $1$, and weights $\w$.   Let $\mathcal{A}$ be the set of weights $(\frac{j_1}{n},\ldots,\frac{j_n}{n})$.  We say that $\M_{0,\mathcal{A}}$  is the \emph{ minimal Hassett space determined by $D^n_{1,\w}$} and call $\rho_{\mathcal{A},N}: \M_{0,n} \rightarrow \M_{0, \mathcal{A}}$  the \emph{associated maximal contraction}.
\end{definition}

To check that $\mathcal{A}$ determines a Hassett weighted moduli space, one needs to know that $\frac{j_i}{n} \in (0,1]$ for each $i \in \{1,\ldots,n\}$, and that $\sum_{i=1}^{n}\frac{j_i}{n} \ge 2$.   
The hypotheses on $\w$ ensure that these conditions on $\mathcal{A}$ are satisfied. 

 \begin{theorem}\label{factor}   Let $\w=(\omega_{j_1},\ldots, \omega_{j_n})$ be an $n$-tuple of fundamental dominant weights of $\sL_n$ satisfying  $j_i \in \{1,\ldots,\lfloor \frac{n}{2} \rfloor\}$ for each $i \in \{1,\ldots,n\}$, and $\sum_{i=1}^{n} j_i \geq 2n$.   Let $\M_{0,\mathcal{A}}$ be the minimal Hassett space determined by $D^n_{1,\w}$.  Then the morphism $\M_{0,n} \rightarrow X$ associated to the divisor $D^n_{1,\w}$ factors through the natural birational contraction $\rho_{\mathcal{A},N}:\M_{0,n} \rightarrow \M_{0,\mathcal{A}}$ defined by Hassett.     
\end{theorem}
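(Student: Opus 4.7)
The plan is to apply a standard factoring criterion: a morphism $\M_{0,n} \to X$ defined by a semi-ample divisor $D$ factors through a birational contraction $\rho$ if and only if $D \cdot C = 0$ for every irreducible curve $C \subset \M_{0,n}$ contracted by $\rho$. Since conformal blocks divisors are semi-ample by \cite{Fakh}*{Lemma 2.2}, it suffices to verify this numerical vanishing on all curves in the fibers of $\rho_{\mathcal{A},N}$.

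First I would identify the exceptional locus of $\rho_{\mathcal{A},N}$. Hassett's construction collapses precisely those boundary divisors $\Delta_{I,I^c} \subset \M_{0,n}$ for which the light side satisfies $\sum_{i \in I} a_i \leq 1$, equivalently $\sum_{i \in I} j_i \leq n$. On such a $\Delta_{I,I^c}$, which up to the clutching map is isomorphic to $\M_{0,|I|+1} \times \M_{0,|I^c|+1}$, the map $\rho_{\mathcal{A},N}$ contracts the light factor $\M_{0,|I|+1}$. Thus it is enough to show that $D^n_{1,\w}|_{\Delta_{I,I^c}}$ is pulled back from the surviving factor $\M_{0,|I^c|+1}$ whenever $\sum_{i \in I} j_i \leq n$.

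Next I would apply the factorization rules for conformal blocks at the node. The restriction $\VV{\sL_n}{1}{\w}|_{\Delta_{I,I^c}}$ decomposes as a direct sum indexed by node weights $\omega_\mu$; for $\sL_n$ level $1$ the fusion rules force all but one summand to vanish, the surviving one corresponding to $\mu \equiv -\sum_{i \in I} j_i \pmod n$ and taking the form $V_I \boxtimes V_{I^c}$, with $V_I$ and $V_{I^c}$ rank-$1$ $\sL_n$ level-$1$ conformal blocks bundles on the two factors. Taking first Chern classes gives
\[
D^n_{1,\w}|_{\Delta_{I,I^c}} \;=\; \pi_1^{*}\, c_1 V_I \;+\; \pi_2^{*}\, c_1 V_{I^c},
\]
so it is enough to show $c_1 V_I = 0$ on the contracted factor.

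The main obstacle is this final vanishing. On $\M_{0,|I|+1}$ the weights are $\{\omega_{j_i}\}_{i \in I}$ together with the node weight $\omega_{n - \sum_{i \in I} j_i}$, and so their $j$-indices sum to exactly $n$. In this minimal regime, Fakhruddin's explicit formula for the $\sL_n$ level-$1$ conformal blocks divisor class \cite{Fakh}*{5.2} reduces to an expression in partial sums $\sum_{i \in J} j_i$, each of which lies in $[0,n]$; no ``overflow'' terms appear, the boundary coefficients vanish identically, and one concludes $c_1 V_I = 0$ as a divisor class. (When some $j_i = 0$ one may instead invoke propagation of vacua to realize $V_I$ as a pullback of a bundle from a still smaller moduli space.) Combining this with the symmetric statement on $\M_{0,|I^c|+1}$ when $\sum_{i \in I^c} j_i \leq n$ shows that $D^n_{1,\w}$ is trivial on every curve in every fiber of $\rho_{\mathcal{A},N}$, and the factoring criterion from the first paragraph produces the required factorization through $\rho_{\mathcal{A},N}$.
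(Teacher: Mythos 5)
Your proposal is correct in outline but takes a genuinely different route from the paper's. The paper argues entirely numerically on $\F$-curves: it cites Fakhruddin's Lemma 4.6 (every effective curve contracted by $\rho_{\mathcal{A},N}$ is a linear combination of contracted $\F$-curves), observes that $F_{N_1,N_2,N_3,N_4}$ is contracted exactly when $\sum_{i \in N_1\cup N_2 \cup N_3} j_i \le n$, and then kills $D^n_{1,\w}\cdot F$ in one line from Proposition \ref{Fakh5.2}: the three light $\nu_i$ sum to at most $n$ and $\nu_4 < n$, so $\sum_k \nu_k = 2n$ is impossible. You instead work divisor-theoretically, restricting to the boundary divisors $\Delta_{I,I^c}$ whose light side collapses, using factorization and the level-$1$ fusion rules to split $D^n_{1,\w}|_{\Delta_{I,I^c}}$ into pullbacks, and reducing to the vanishing of the level-$1$ class on the light factor, where the weight indices sum to exactly $n$. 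That vanishing is true, and its cleanest proof is again the $\F$-curve computation: since the actual partial sums of the indices total $n$ and each $\nu_k$ is at most the corresponding actual sum, one gets $\sum_k \nu_k \le n < 2n$, so every $\F$-curve intersection vanishes and the class is zero. Your route buys a stronger structural conclusion ($D^n_{1,\w}$ restricted to each collapsed boundary divisor is pulled back from the surviving factor), but two steps need tightening before it is a complete proof: (i) the claim that every irreducible curve contracted by $\rho_{\mathcal{A},N}$ lies in some such $\Delta_{I,I^c}$ and is contracted by the projection to the heavy factor is exactly the point the paper disposes of by citing Fakhruddin's Lemma 4.6, and you assert it without justification; and (ii) the ``no overflow'' vanishing is the crux of your argument and is dispatched by hand-waving --- replace it with the explicit estimate $\sum_k \nu_k \le n$ above. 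With those two points made precise, your proof is complete.
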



 In the proof of Theorem \ref{factor}, we shall see that if $F=F_{N_1,N_2,N_3,N_4}$ is any $\F$-curve that is contracted by $\rho_{\mathcal{A},N}$, then $D^n_{1,\omega} \cdot F =0$.
For the reader's convenience, we state just what we need of Fakhruddin's Proposition $5.2$ in order to show that the divisors $D^n_{1,\w}$ contract the same $\F$-curves as the Hassett contractions $\rho_{\mathcal{A},N}$.

\begin{proposition}\label{Fakh5.2} \cite{Fakh}*{Prop. 5.2} 
Let $\w=(\omega_{i_1}, \omega_{i_2}, \ldots, \omega_{i_n})$ with $0 \le i_j < n$ for $j=1,2,\ldots, n$, where $\omega_{0}=0$.  Let $\F$
be an $\F$-curve on $\M_{0,n}$ corresponding to a partition $\{1,2,\ldots, n\}=\cup_{k=1}^{4}N_k$.  Let $\nu_k$ be the representative in $\{0,1,\ldots, n-1\}$ of $\sum_{j \in N_k}i_j$ modulo $n$.
If $D^{n}_{1,\w} \cdot F \ne 0$, then $\sum_{k} \nu_k=2n$.
\end{proposition}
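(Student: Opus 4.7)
The plan is to reduce the intersection $D^{n}_{1,\w}\cdot F$ via factorization to a single degree computation on $\M_{0,4}$, and then to show by Casimir/level-one numerology that this degree vanishes unless $\sum_k\nu_k=2n$.

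First I would apply the factorization/intersection formula for conformal blocks on an $\F$-curve. The $\F$-curve $F$ comes from a degenerate stable curve with a central four-pointed $\mathbb{P}^1$ and four attached rational tails carrying the marked points in $N_1,N_2,N_3,N_4$. Factorization expresses
$$D^{n}_{1,\w}\cdot F \;=\; \sum_{\mu_1,\mu_2,\mu_3,\mu_4}\Biggl(\prod_{k=1}^{4}\operatorname{rk}\mathbb{V}\bigl(\sL_n,1,(\omega_{i_j})_{j\in N_k},\mu_k^{*}\bigr)\Biggr)\cdot\deg\mathbb{V}\bigl(\sL_n,1,\mu_1,\mu_2,\mu_3,\mu_4\bigr),$$
where the sum runs over quadruples of level-one dominant weights of $\sL_n$ and the last factor is the degree on $\M_{0,4}$.

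Next I invoke the level-one fusion rule for $\sL_n$: the rank of $\mathbb{V}(\sL_n,1,\omega_{a_1},\ldots,\omega_{a_m})$ is $1$ if $\sum_s a_s\equiv 0\pmod n$ and $0$ otherwise, and the dual of $\omega_j$ is $\omega_{n-j}$. Applied to the $k$-th tail, the rank is nonzero exactly when $\mu_k=\omega_{\nu_k}$, and this choice is unique. Hence every quadruple $(\mu_1,\ldots,\mu_4)$ other than $(\omega_{\nu_1},\ldots,\omega_{\nu_4})$ contributes $0$, and
$$D^{n}_{1,\w}\cdot F \;=\; \deg\mathbb{V}\bigl(\sL_n,1,\omega_{\nu_1},\omega_{\nu_2},\omega_{\nu_3},\omega_{\nu_4}\bigr)\quad\text{on }\M_{0,4}.$$
The rank-one hypothesis on $D^{n}_{1,\w}$ gives $\sum_j i_j\equiv 0\pmod n$, so $\sum_k\nu_k\equiv 0\pmod n$, and since each $\nu_k\in\{0,1,\ldots,n-1\}$ we have $\sum_k\nu_k\in\{0,n,2n,3n\}$.

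The remaining task is to show vanishing in the cases $0$, $n$, $3n$. The case $\sum\nu_k=0$ forces all $\nu_k=0$ and gives the trivial bundle. The case $\sum\nu_k=3n$ reduces to the case $\sum\nu_k=n$ by dualizing all four weights simultaneously, since $\omega_j\mapsto\omega_{n-j}$ sends a rank-one conformal blocks bundle to its dual and thus negates and then restores the degree sign, while sending $\sum\nu_k=3n$ to $\sum(n-\nu_k)=n$. For the genuinely nontrivial case $\sum\nu_k=n$, I would use the explicit first-Chern-class formula for a rank-one conformal blocks line bundle on $\M_{0,4}$ in terms of the Casimir eigenvalues $c(\omega_j)=\tfrac{j(n-j)}{n}$ of $\sL_n$ at level one. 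The degree reduces to a rational expression in $n$ and the $\nu_k$, and I expect it to factor as a product of terms measuring how far the quadruple is from the "maximal" configuration $\sum\nu_k=2n$; checking that this product vanishes precisely when $\sum\nu_k\in\{0,n,2n\setminus 2n,3n\}\setminus\{2n\}$ is the core arithmetic to verify.

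The principal obstacle is making the Chern-class computation on $\M_{0,4}$ explicit enough to read off the vanishing. Everything upstream — factorization, the $\sL_n$ level-one fusion rules, and the duality symmetry — is essentially formal and reduces the question to that single computation. Once the Casimir formula is written down, the verification becomes elementary combinatorics in $\nu_1,\nu_2,\nu_3,\nu_4$, and I would also cross-check it against the simplest nontrivial instance (say $n=3$, $\nu=(1,1,1,0)$) to rule out sign or normalization errors.
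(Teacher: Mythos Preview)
The paper does not give its own proof of this statement: it is quoted from Fakhruddin and used as input. So there is nothing in the paper to compare against beyond the citation. Your outline is the standard route to results of this type, and is essentially how Fakhruddin himself proceeds: factorize onto $\M_{0,4}$, use the level-one $\sL_n$ fusion rule to single out the unique surviving quadruple $(\omega_{\nu_1},\ldots,\omega_{\nu_4})$, and then compute that four-point degree directly.

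Your reduction is correct, but you stop exactly where the content is. In the case $\sum_k\nu_k=n$ you only say you ``expect'' the degree formula to vanish; you should actually do it. On $\M_{0,4}$ the rank-one first Chern class is
\[
\deg\mathbb{V}(\sL_n,1,\omega_{\nu_1},\ldots,\omega_{\nu_4})
\;=\;\sum_{k=1}^{4} h(\nu_k)\;-\;h(\mu_{12})-h(\mu_{13})-h(\mu_{14}),
\]
where $h(j)=\tfrac{j(n-j)}{2n}$ is the conformal weight of $\omega_j$ and $\mu_{1a}$ is the unique internal label at the corresponding boundary point. If some $\nu_k=0$ the bundle is a pullback from $\M_{0,3}$ and has degree $0$; if all $\nu_k>0$ and $\sum\nu_k=n$, every pair sum satisfies $\nu_a+\nu_b<n$, so $\mu_{1a}=\nu_1+\nu_a$, and a two-line symmetric-function identity (using $(a+b)(c+d)+(a+c)(b+d)+(a+d)(b+c)=2\sum_{p<q}\nu_p\nu_q$) gives $\deg=0$. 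That is the entire computation, and it should be written out rather than promised.

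One smaller point: your duality step is phrased incorrectly. Replacing each $\omega_{\nu_k}$ by $\omega_{n-\nu_k}$ gives an \emph{isomorphic} conformal blocks bundle, not the dual line bundle, so nothing is ``negated and then restored''. Equivalently, $h(j)=h(n-j)$ makes the degree formula above manifestly invariant under $\nu_k\mapsto n-\nu_k$, which is all you need to pass from $\sum\nu_k=3n$ to $\sum\nu_k=n$.
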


 \begin{proof}[Proof of Theorem \ref{factor}]
Let $f^n_{1,\w}: \M_{0,n} \rightarrow Y$ be the morphism given by $D^n_{1,\w}$ and $\rho_{\mathcal{A},N}$ be the contraction of $\M_{0,n}$ onto the minimal Hassett moduli space  $\M_{0,\mathcal{A}}$.  To show that $f^n_{1,\w}$ factors through $\rho_{\mathcal{A},N}$ we would like to define a morphism $f:\M_{0,\mathcal{A}} \rightarrow Y$, such that $f^n_{1,\w}=f \circ \rho_{\mathcal{A},N}$.  Set theoretically, this will be the map
$f(y)=f^n_{1,\w}(\rho_{\mathcal{A},N}^{-1}(y))$.  This is well defined as long as whenever $X$ is a curve on $\M_{0,n}$ that is contracted by $\rho_{\mathcal{A},N}$, then $X$ is also contracted by $f^n_{1,\w}$.  We show that this
is true for $\F$-curves.  Since every effective curve that is contracted by $\rho_{\mathcal{A},N}$ is a linear combination of the $\F$-curves that get contracted (\cite{Fakh}, Lemma $4.6$), the map is well defined.  It remains to check that the intersection $D^n_{1,\w} \cdot F_{N_1,N_2,N_3,N_4} = 0$ for all $\F$-curves $F_{N_1,N_2,N_3,N_4}$ that are contracted by  $\rho_{\mathcal{A},N}$.  

An $\F$-curve $F=F_{N_1,N_2,N_3,N_4}$ is contracted by $\rho_{\mathcal{A},N}$ precisely when $\sum_{i \in N_1 \cup N_2 \cup N_3} \frac{j_i}{n} \le 1$.  (Here we order the indices so that $|N_1| \le |N_2| \le |N_3| \le |N_4|$.)    Using Fakhruddin's notation from Proposition \ref{Fakh5.2}, this gives $\sum_{i =1}^3 \nu_i  \le n$ and by definition, $\nu_4 <n$.  In order for the intersection $D^n_{1, \w} \cdot F$ to be nonzero, the sum $\sum_{i=1}^4 \nu_i$ must be $2n$, which is impossible.

\end{proof}

\begin{remark}\label{Giansiracusa}  Typically, the divisors $D^n_{1,j}$ contract more curves than the contraction maps $\rho_{\mathcal{A},N}$ from $\M_{0,n}$ to the minimal Hassett space $\M_{0,\mathcal{A}}$ determined by $D^n_{1,j}$.  
For instance, the morphism of the divisor $D^n_{1,2}$ has image equal to $(\mathbb{P}^1)^n\dblq SL_2$ (cf. \cite{AGS}).     
In \cite{GiansiracusaSimpson}, Giansiracusa and Simpson study moduli spaces of pointed conics $Con(n) \dblq \SL_3$ and describe birational morphisms  $\overline{\mathcal{M}}_{0,n} \rightarrow \mathtt{Con}(n) \dblq \SL_3$.  While talking with Giansiracusa, we noticed that for certain linearizations,  the curves that get contracted under the morphisms that he and Simpson study are the same curves that get contracted by $D^n_{1,3}$.   Giansiracusa has generalized this observation further, identifying the images of the morphisms given by the $D^n_{1,j}$ with GIT quotients (\cite{Giansiracusa}).
\end{remark}

\section{New tools for working with curves and divisors on \texorpdfstring{$\M_{0,n}$}{M\_0,n}}\label{newtools} 
The main objects of study in this work are certain semiample divisors that come from conformal blocks.  One of the ways we can study these divisors is by understanding their intersections with families of independent $\F$-curves in $H_{2} (\M_{0,n} ,\Q)$.  We give an example below.  

 \begin{proposition}\label{C1} Let $n=2g+2$, or $n=2g+3$.  The set of curves $\mathcal{F}=\{F_{1,1,i}: 1 \le i \le g\}$ is independent.
 \end{proposition}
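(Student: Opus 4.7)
The plan is to reduce the claim to a square matrix computation: since $|\mathcal{F}| = g = \dim \Pic(\M_{0,n})_\Q^{S_n}$, the curves in $\mathcal{F}$ are linearly independent as symmetric classes if and only if the $g \times g$ pairing matrix $(B_k \cdot F_{1,1,i})$, with $2 \le k \le \lfloor n/2 \rfloor$ and $1 \le i \le g$, is nonsingular. Equivalently, I would show that the only symmetric divisor $D = \sum_k \alpha_k B_k$ satisfying $D \cdot F_{1,1,i} = 0$ for every $i \in \{1,\ldots,g\}$ is $D = 0$, and I would work with this second formulation.

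To read off the intersection numbers I would apply Lemma~\ref{Keel McKernan formula} with $(a,b,c,d) = (1,1,i,n-2-i)$ and use the standard conventions $\alpha_1 = 0$ and $\alpha_t = \alpha_{n-t}$. The vanishing condition then becomes the three-term recurrence
\[
\alpha_{i+2} - 2\alpha_{i+1} + \alpha_i \;=\; \alpha_2.
\]
For $1 \le i \le g-1$ all three indices lie inside the basis range $\{1,\ldots,g+1\}$, so the reflection convention is inactive. The characteristic polynomial $(x-1)^2$ together with the initial datum $\alpha_1 = 0$ then forces the closed form $\alpha_i = \binom{i}{2}\alpha_2$ throughout the basis range.

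The decisive step is the remaining equation at $i = g$, the one place where the reflection identification is genuinely invoked: $\alpha_{g+2}$ equals $\alpha_g$ when $n = 2g+2$ and $\alpha_{g+1}$ when $n = 2g+3$. Substituting the closed form into this boundary equation yields $-(2g+1)\alpha_2 = 0$ in the even case and $-(g+1)\alpha_2 = 0$ in the odd case. In either situation the integer coefficient of $\alpha_2$ is nonzero, so $\alpha_2 = 0$, and consequently every $\alpha_i = 0$ and $D = 0$.

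The only real delicacy is careful bookkeeping of the reflection identification $\alpha_t = \alpha_{n-t}$ at the single boundary index $i = g$; conceptually, that boundary equation supplies precisely the additional constraint needed to eliminate the one-parameter family of solutions to the homogeneous recurrence, and everything else is a routine linear-recurrence calculation.
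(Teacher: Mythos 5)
Your proof is correct, and at the level of strategy it coincides with the paper's: both arguments reduce the independence of $\mathcal{F}$ to the nonsingularity of the $g\times g$ intersection matrix $M=(B_k\cdot F_{1,1,i})$. Where you differ is in how nonsingularity is certified. The paper exhibits the explicit inverse $N=M^{-1}$ (Lemma \ref{inverseintersectionmatrixlemma}, whose verification is left as an exercise), in part because that inverse is reused later to express divisors and $\F$-curves in the natural bases (Theorem \ref{nclass}, Proposition \ref{gamma j}). You instead show $\ker M$ is trivial: Lemma \ref{Keel McKernan formula} turns $D\cdot F_{1,1,i}=0$ into the recurrence $\alpha_{i+2}-2\alpha_{i+1}+\alpha_i=\alpha_2$, whose solution with $\alpha_1=0$ is $\alpha_i=\binom{i}{2}\alpha_2$, and the single boundary equation at $i=g$, where the reflection $\alpha_t=\alpha_{n-t}$ genuinely intervenes, yields $-(2g+1)\alpha_2=0$ for $n=2g+2$ and $-(g+1)\alpha_2=0$ for $n=2g+3$; I have checked both boundary coefficients and they are correct. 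Your argument is complete and self-contained where the paper's key computation is deferred to an exercise, but it establishes only invertibility and does not produce the matrix $N$ itself, which the paper needs for its subsequent computations; the minor caveat that the ``only if'' direction of your opening equivalence requires the pairing between symmetric divisors and symmetric curve classes to be perfect is harmless, since only the ``if'' direction is used.
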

(We claim no originality for this result.  It appears, for instance, in  \cite{GibneyNumerical} and 
\cite{AlexeevSwinarski}.  We include a proof because it illustrates
our techniques well.  See the related paper \cite{AGS} for a proof by a different method.)

We prove Proposition \ref{C1} by studying matrices of intersection numbers, and answer the following question about divisors at the same time:  Suppose we have a symmetric divisor $D$ on $\M_{0,n}$.  Given the intersection numbers of $D$ with the $\F$-curves $\{ F_{i,1,1,n-i-2} \}$, how can we write $D$ in the basis of $\Pic(\M_{0,n})_{\Q}^{S_n}$ consisting of the divisors $B_{2}, \ldots, B_{\lfloor n/2 \rfloor}$?

Our goal is to find coefficients $b_j$ such that $D = \sum_{i=2}^{g+1} b_{i} B_{i}$.  (Note the indexing on the right: there are $g$ divisors, starting with $B_{2}$ and ending with $B_{g+1}$.)

Each curve $F_{i,1,1}$ yields an equation:
\[   D \cdot F_{i,1,1} = (b_{2} B_{2} + b_{3} B_{3} + \cdots + b_{g+1} B_{g+1} ) \cdot F_{i,1,1} = b_{2} (B_{2} \cdot F_{i,1,1} )+ b_{3} ( B_{3}\cdot F_{i,1,1} ) + \cdots +  b_{g+1} (B_{g+1}\cdot F_{i,1,1} )
\] 

We can write this as a matrix equation:
\begin{displaymath}
\left( \begin{array}{cccc}
B_{2} \cdot F_{1,1,1}  & B_{3} \cdot F_{1,1,1} & \cdots & B_{g+1} \cdot F_{1,1,1} \\
B_{2} \cdot F_{2,1,1}  & B_{3} \cdot F_{2,1,1} & \cdots & B_{g+1} \cdot F_{2,1,1} \\
\vdots & \vdots & \cdots & \vdots\\
B_{2} \cdot F_{g,1,1}  & B_{3} \cdot F_{g,1,1} & \cdots & B_{g+1} \cdot F_{g,1,1}
\end{array} \right)
\left( \begin{array}{c}
b_{2}\\
b_{3}\\
\vdots\\
b_{g+1}
\end{array} \right)
= 
\left( \begin{array}{c}
D \cdot F_{1,1,1}\\
D \cdot F_{2,1,1}\\
\vdots \\
D \cdot F_{g,1,1}\\
\end{array}
\right).
\end{displaymath}

Let $M$ be the matrix of intersection numbers $(F_{i,1,1} \cdot B_{j})$, and let $N:=M^{-1}$.  Then we have 
\begin{displaymath}
\left( \begin{array}{c}
b_{2}\\
b_{3}\\
\vdots\\
b_{g+1}
\end{array} \right)
= N 
\left( \begin{array}{c}
D \cdot F_{1,1,1}\\
D \cdot F_{2,1,1}\\
\vdots \\
D \cdot F_{g,1,1}\\
\end{array}
\right).
\end{displaymath}

The following lemma, whose proof is an easy exercise, gives a formula for $N$.  

\begin{lemma} \label{inverseintersectionmatrixlemma} If $n$ is odd, then
\begin{displaymath}
N_{rs} = \left\{\begin{array}{l}
\rule{0pt}{18pt} \displaystyle  \frac{r(r+1)}{2(g+1)} - r+s \qquad \mbox{if $s < r$}\\
\rule{0pt}{18pt} \displaystyle  \frac{r(r+1)}{2(g+1)} \qquad \mbox{if $s\geq r$}
\end{array} \right.
\end{displaymath}
If $n$ is even, then:
\begin{displaymath}
N_{rs} = \left\{\begin{array}{l}
\rule{0pt}{18pt} \displaystyle\frac{r(r+1)}{(2g+1)} -r+s\qquad \mbox{if $s<g, s < r$}\\
\rule{0pt}{18pt}\displaystyle \frac{r(r+1)}{(2g+1)} \qquad \mbox{if $s<g, s \geq r$}\\
\rule{0pt}{18pt} \displaystyle \frac{r(r+1)}{2(2g+1)} \qquad \mbox{if $s=g$}\\
\end{array} \right.
\end{displaymath}
\end{lemma}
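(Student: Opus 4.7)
My plan is to verify $MN = I$ directly by matrix multiplication. First, I would compute the entries of $M$ explicitly using Lemma \ref{Keel McKernan formula} applied to the partition $(r,1,1,n-r-2)$, with the standing conventions $\alpha_1 = 0$ and $\alpha_t = \alpha_{n-t}$ for $t > \lfloor n/2 \rfloor$. Taking $D = B_{s+1}$ (so $\alpha_{s+1} = 1$ and all other basis coefficients vanish) yields
\[ M_{rs} = -[s = r-1] + 2[s = r] - [s = r+1] + [s = 1] \qquad (1 \le r \le g-1), \]
where $[\cdot]$ denotes the Iverson bracket. The last row $r = g$ is slightly different because $\alpha_{g+2}$ reduces via the symmetry: for $n$ odd, $\alpha_{g+2} = \alpha_{g+1}$, so $M_{gs} = -[s = g-1] + [s = g] + [s = 1]$; for $n$ even, $\alpha_{g+2} = \alpha_g$, so $M_{gs} = -2[s = g-1] + 2[s = g] + [s = 1]$. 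In either case $M$ has at most four nonzero entries per row.

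Given this sparsity, for $r < g$ the product reduces to $(MN)_{rs} = -N_{r-1,s} + 2N_{r,s} - N_{r+1,s} + N_{1,s}$ (with the convention $N_{0,s} := 0$). I would then decompose each entry $N_{rs} = A_{rs} + B_{rs}$, where $A_{rs} = r(r+1)/C_s$ is the ``quadratic'' part (with $C_s = 2(g+1)$ for $n$ odd, $C_s = 2g+1$ for $n$ even and $s < g$, and $C_s = 2(2g+1)$ for $n$ even and $s = g$) and $B_{rs} = s - r$ if $s < r$ (and $s < g$ in the $n$ even case), otherwise zero. For fixed $s$, the $A$-contribution vanishes from the discrete identity
\[ -(r-1)r + 2r(r+1) - (r+1)(r+2) + 1 \cdot 2 = 0, \]
that is, the second difference of $r \mapsto r(r+1)$ is $-2$, which is exactly offset by the constant $+N_{1,s}$ coming from the $[s = 1]$ term in $M$. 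The $B$-contribution yields $\delta_{rs}$ by a short case check on whether $s$ lies below, at, or above $r$; for example, at $s = r$ the only nonzero $B$-term is $-B_{r+1,r} = 1$.

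The boundary row $r = g$ is handled analogously, by direct substitution into $(MN)_{gs} = N_{1,s} - N_{g-1,s} + N_{g,s}$ (for $n$ odd) or $(MN)_{gs} = N_{1,s} - 2N_{g-1,s} + 2N_{g,s}$ (for $n$ even); in each case the answer is $\delta_{gs}$. For instance, in the $n$ odd case at $s = g$ one computes
\[ \frac{1}{g+1} - \frac{(g-1)g}{2(g+1)} + \frac{g}{2} = \frac{2 - (g-1)g + g(g+1)}{2(g+1)} = 1, \]
and the companion sums for $s < g$ collapse to $0$ after the linear corrections are included. The main obstacle is purely bookkeeping: one has to track interior rows versus the boundary row $r = g$, interior columns versus the boundary column $s = g$ (when $n$ is even), the subcases of $s$ relative to $r$, and the parity of $n$. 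Each subcase is a one-line check; the single structural observation that makes the inverse formula come out so cleanly is the compatibility between the constant $+[s = 1]$ term in $M$ and the discrete Laplacian of $r(r+1)$ appearing in $N$.
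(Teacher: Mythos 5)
Your proposal is correct, and it fills in a verification that the paper explicitly leaves to the reader (the lemma is stated with the remark that its proof ``is an easy exercise''), so there is no competing argument in the paper to compare against. I checked your formula for $M$ against Lemma \ref{Keel McKernan formula} (including the boundary row $r=g$, where $\alpha_{n-g-2}$ reduces differently for $n$ odd versus $n$ even) and against the displayed examples for $n=12,13$, and the cancellation of the quadratic part via the second difference of $r(r+1)$ together with the case analysis on the linear part does yield $MN=I$ in all subcases.
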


\begin{example}  Let $n=13$. Then 
\begin{displaymath}%
M = \left(
\begin{array}{rrrrr}
3 & -1 & 0 & 0 & 0\\
0 & 2 & -1 & 0 & 0 \\
1 & -1 & 2 & -1 & 0 \\
1 & 0 & -1 & 2 & -1 \\
1 & 0 & 0 & -1 & 1 \\
\end{array} \right), \qquad \qquad N = \frac{1}{6}
\left(
\begin{array}{rrrrr}
1 & 1 & 1 & 1 & 1 \\
-3 & 3 & 3 & 3 & 3 \\
-6 & 0 & 6 & 6 & 6 \\
-8 & -2 & 4 & 10 & 10 \\
-9 & -3 & 3 & 9 & 15
\end{array} \right).
\end{displaymath}
\end{example}

\begin{example} \label{n=12 example} Let $n=12$.  Then 
\begin{displaymath}
M = \left(
\begin{array}{rrrrr}
3 & -1 & 0 & 0 & 0\\
0 & 2 & -1 & 0 & 0 \\
1 & -1 & 2 & -1 & 0 \\
1 & 0 & -1 & 2 & -1 \\
1 & 0 & 0 & -2 & 2
\end{array} \right), \qquad \qquad N = \frac{1}{11}
\left(
\begin{array}{rrrrr}
2 & 2 & 2 & 2 & 1\\
-5 & 6 & 6 & 6 & 3 \\
-10 & 1 & 12 & 12 & 6\\
-13 & -2 & 9 & 20 & 10\\
-14 & -3 & 8 & 19 & 15
\end{array} \right).
\end{displaymath}
\end{example}

In particular, since $M$ is invertible, it has full rank, and this proves Proposition \ref{C1}.

\subsection{Writing $\F$-curves \texorpdfstring{$F_{a,b,c,d}$}{F\_a,b,c,d} in the basis \texorpdfstring{$\{ F_{j,1,1} \}$}{\{F\_j,1,1\}}}

Since the $\F$-curves $\{ F_{j,1,1} \}_{j=1}^{g}$ form a basis for $H_{2}(\M_{0,n},\Q)^{S_n}$, we may write $F_{a,b,c,d}  = \sum_{j=1}^{g} \gamma_j F_{j,1,1}$.  In this subsection, we obtain a general formula for the coefficients $\gamma_j$.  

Let $P := N^{t}$ be the transpose of the matrix $N$ of Lemma \ref{inverseintersectionmatrixlemma}: 

\begin{equation} \label{matrix P}
P_{s,t} = \left\{ \begin{array}{ll}
\rule{0pt}{18pt} \frac{t(t+1)}{n-1} - t + s & \mbox{if $s<g$ and $s<t$} \\
\rule{0pt}{18pt} \frac{t(t+1)}{n-1}  & \mbox{if $s<g$ and $s\geq t$} \\
\rule{0pt}{18pt} \frac{t(t+1)}{n-1}  & \mbox{if $s=g$ and $n$ odd} \\
\rule{0pt}{18pt} \frac{t(t+1)}{2(n-1)}  & \mbox{if $s=g$ and $n$ even} \\
\end{array} \right.
\end{equation}

Suppose $\sum_{j=1}^{g} \gamma_j F_{j,1,1} = F_{a,b,c,d}$.  We can intersect both sides of this equation 
with the divisors $B_{2},\ldots,B_{g+1}$ and solve for
the vector  $\vec{\gamma} $ of numbers $\gamma_{j}$:
\begin{displaymath} \vec{\gamma} = P \left( \begin{array}{c}
B_2 \cdot F_{a,b,c,d}\\
B_3 \cdot F_{a,b,c,d}\\
\vdots \\
B_{g+1} \cdot F_{a,b,c,d}\\
\end{array}
\right).
\end{displaymath}

By combining this formula for $P$ with Keel and McKernan's formula for intersection numbers (see \ref{Keel McKernan formula}) above, we obtain the following formula for $\gamma_{j}$.  Note that the function $A$ below arises from combining all the fractional parts of \ref{matrix P}; the function $B$ arises from the remaining terms in line 1 of \ref{matrix P}.

\begin{proposition} \label{gamma j}  Suppose that $a \leq b \leq c \leq d$.  
Define three piecewise linear functions $A$, $B$, $f$ as follows:
\begin{eqnarray}
A(a,b,c,d) & :=  &\left\{ \begin{array}{ll}
0 & \mbox{if $d > g+1$} \\
2a & \mbox{if $a+d \leq g+1$} \\
n-2d & \mbox{if $d \leq g+1$ and $a+c \leq g+1 < a+d$} \\
2b & \mbox{if $d \leq g+1$ and $a+b \leq g+1 < a+c$} \\
n-2a & \mbox{if $d \leq g+1$ and $g+1 < a+b$} \\
\end{array} \right. \nonumber \\
f(x,n) & := & \left\{ \begin{array}{ll}
x & \mbox{if $x \leq g+1$} \\
n-x & \mbox{if $x > g+1$}
\end{array} \right. \nonumber \\
B(j,a,b,c,d) & := & \sum_{t \in \{a,b,c,d\}} \max\{ (f(t)-1-j), 0 \} -\sum_{ u \in \{b,c,d\}}  \max \{  f(a+u)-1-j, 0 \}
\end{eqnarray}

Then $\gamma_j$, the coefficient on $F_{j,1,1}$ when $F_{a,b,c,d}$ is written in the $\{F_{j,1,1}\}$ basis, is
\begin{equation}
\gamma_j =  \left\{ \begin{array}{ll}
A(a,b,c,d)+B(j,a,b,c,d) & \mbox{if $n$ is odd, or $n$ even, $j \neq g$} \\
\rule{0pt}{18pt} \frac{1}{2}A(a,b,c,d)+B(j,a,b,c,d) & \mbox{if $n$ even, $j =g$} \\
\end{array} \right.
\end{equation}
\end{proposition}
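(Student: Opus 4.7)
My plan is to decompose the matrix product $\vec\gamma = P\vec v$, where $\vec v_t = B_{t+1}\cdot F_{a,b,c,d}$, into two pieces corresponding to $A$ and $B$.

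The starting point is the Keel--McKernan formula (Lemma \ref{Keel McKernan formula}). Applied to $D = B_{t+1}$, and folding the convention $\alpha_s = \alpha_{n-s}$ into the function $f$, it gives
\[
B_{t+1}\cdot F_{a,b,c,d} \;=\; \sum_{u\in\{b,c,d\}}\mathbf{1}[f(a+u)=t+1] \;-\; \sum_{u\in\{a,b,c,d\}}\mathbf{1}[f(u)=t+1].
\]
Plugging this into $\gamma_j = \sum_{t=1}^g P_{j,t}\,(B_{t+1}\cdot F_{a,b,c,d})$ and interchanging the summations collapses the sum over $t$, yielding
\[
\gamma_j \;=\; \sum_{u\in\{b,c,d\}} P_{j,\,f(a+u)-1} \;-\; \sum_{u\in\{a,b,c,d\}} P_{j,\,f(u)-1},
\]
with the convention $P_{j,0}:=0$ (invoked only when $f(x)=1$, i.e.\ $x\in\{1,n-1\}$).

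Next I would split the matrix entry. Reading off \eqref{matrix P}, one has $P_{j,t}=\tfrac{t(t+1)}{n-1}-\max\{t-j,0\}$ whenever $j<g$, or when $j=g$ and $n$ is odd (the max being automatically zero in that subcase); and $P_{g,t}=\tfrac{1}{2}\cdot\tfrac{t(t+1)}{n-1}$ when $n$ is even. The contribution of the piecewise-linear term $-\max\{t-j,0\}$ matches $B(j,a,b,c,d)$ on the nose by direct comparison with its definition (and also vanishes in the $n$-even, $j=g$ case, since $f(x)\le g+1$ forces every $\max$ to be zero). Writing $h(x):=(f(x)-1)f(x)/(n-1)$, this reduces the entire proposition to the single identity
\[
A(a,b,c,d) \;=\; \sum_{u\in\{b,c,d\}} h(a+u) \;-\; \sum_{u\in\{a,b,c,d\}} h(u),
\]
with the factor $\tfrac{1}{2}$ in the exceptional case matching the halving of $A$ in the statement.

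The main obstacle is proving this last identity, which I would handle by case analysis on which of the six values $a,b,c,d,a+b,a+c,a+d$ exceed the threshold $g+1$. The ordering $a\le b\le c\le d$ and the constraint $a+b+c+d=n$ make the five cases listed in the definition of $A$ exhaustive. Setting $T(x):=x(x-1)$ so that $h(x)=T(f(x))/(n-1)$, the elementary identity $T(x+y)=T(x)+T(y)+2xy$ lets one expand each sum and then invoke $a+b+c+d=n$ to collapse it. For instance, when $a+d\le g+1$ no folding occurs and the expression evaluates to $2a(n-1)/(n-1)=2a$; when $d>g+1$, folding $a+d$ to $b+c$ and $d$ to $a+b+c$ makes everything telescope to $T(a+b+c)-T(a+b+c)=0$. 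The bulk of the work is the bookkeeping: tracking which arguments get folded in each of the five cases and verifying that the resulting algebraic identities reduce to the compact closed forms $0$, $2a$, $n-2d$, $2b$, $n-2a$ respectively.
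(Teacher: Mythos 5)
Your proposal is correct and follows essentially the same route as the paper: the paper derives Proposition \ref{gamma j} exactly by combining the formula \eqref{matrix P} for $P$ with the Keel--McKernan intersection formula of Lemma \ref{Keel McKernan formula}, with $A$ arising from the fractional parts $\tfrac{t(t+1)}{n-1}$ of $P$ and $B$ from the remaining $-t+s$ terms. Your write-up in fact supplies more detail (the splitting $P_{j,t}=\epsilon_j\tfrac{t(t+1)}{n-1}-\max\{t-j,0\}$ and the case-by-case verification of the identity for $A$) than the paper, which leaves this computation to the reader.
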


\begin{example}  Let $n= 12$.  Suppose we want to compute $F_{1,2,2}$
  in the basis $\{ F_{j,1,1} \}$.  The matrix $N$ is given in Example
  \ref{n=12 example}.  We compute
\begin{displaymath}
\vec{\gamma} = \frac{1}{11} \left(
\begin{array}{rrrrr}
2 & -5 & -10 & -13 & -14 \\
2 & 6 & 1 & -2 & -3 \\
2 & 6 & 12 & 9 & 8 \\
2 & 6 & 12 & 20 & 19 \\
1 & 3 & 6 & 10 & 15 \\
\end{array} \right)
\left(
\begin{array}{r}
-2 \\
2\\
1\\
-1 \\
0
\end{array} \right)
 = (-1,1,1,0,0)
\end{displaymath}
Thus, 
\[  F_{1,2,2,7} = -F_{1,1,1,9} + F_{2,1,1,8} + F_{3,1,1,7}.
\]
Alternatively, using Proposition \ref{gamma j} above, we find: $A(1,2,2,7)=0$, $B(1,1,2,2,7)=-1$, \\$B(2,1,2,2,7) = B(3,1,2,2,7)=1$, and $B(4,1,2,2,7)=B(5,1,2,2,7)=0$.
\end{example}

\begin{remark}
When $n$ is even, we can show $A(a,b,c,d)$ is even, and it follows that  the numbers $\gamma_j$ are integers.
\end{remark}

\subsection{Some useful formulas}
In this subsection we give more explicit formulas for some curves which appear often in the
sequel.  These cover many, but not all, of the curves we work with; any example not covered by the propositions below can be calculated
from Proposition \ref{gamma j} instead.  The formulas below are easily obtained from Proposition \ref{gamma j}.

\begin{proposition}\label{case 1}
\mbox{} \\
Suppose $2k < g+1-2k$.
Suppose that $i+2k \leq g+1$ and $i \neq g+1-2k$ if $n$ is even.  Then $F_{i,k,k} = \sum_{j=1}^{g} \gamma_{j} F_{j,1,1}$, where the coefficients $\gamma_{j}$ are given below.

If $i<k$:
\begin{equation}
\gamma_j = \left\{ \begin{array}{ll}
-j-1 & \mbox{ if $j<i$}                              \\
-i & \mbox{ if $ i \leq j < k$}                     \\
2j+2-i-2k &\mbox{  if $k \leq j < i+k$}   \\
i & \mbox{ if $i+k \leq j < 2k$ }                 \\
i+2k-j-1& \mbox{ if $2k \leq j < i+2k$ }\\
0 & \mbox{ if $i+2k \leq j$ }                   
\end{array} \right.
\end{equation}

If $k \leq i<2k$:
\begin{equation}
\gamma_j = \left\{ \begin{array}{ll}
 -j-1 & \mbox{ if $j<k$  }                              \\
 j+1-2k & \mbox{ if $k \leq j < i$ }                \\
 2j+2-i-2k& \mbox{ if $i \leq j < 2k$ }        \\
 j+1-i & \mbox{ if $2k \leq j < i+k$  }           \\
 i+2k-j-1 & \mbox{ if $i+k \leq j < i+2k$ }  \\
 0 & \mbox{ if $i+2k \leq j$         }                 
\end{array} \right.
\end{equation}

If $2k \leq i \leq g+1-2k$:
\begin{equation}
\gamma_j = \left\{ \begin{array}{ll}
 -j-1 &\mbox{ if $j<k$} \\
 j+1-2k &\mbox{ if $k \leq j < 2k$} \\
 0&\mbox{ if $2k \leq j < i$}\\
 j+1-i & \mbox{ if $i \leq j < i+k$}\\
i+2k-j-1&\mbox{ if $i+k \leq j < i+2k$}\\
0 &\mbox{ if $i+2k \leq j$ }
\end{array} \right.
\end{equation}

\end{proposition}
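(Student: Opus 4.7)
The plan is to derive each formula directly from Proposition \ref{gamma j} by expanding the piecewise-linear function $B(j,a,b,c,d)$ on the sorted tuple corresponding to $(i,k,k,n-i-2k)$; nothing beyond Proposition \ref{gamma j} and careful case analysis is needed.

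First I would verify that the hypotheses force $A(a,b,c,d) = 0$. Since $i + 2k \leq g+1$, the largest entry is $d := n - i - 2k \geq n - (g+1)$, and under either $n$ odd or ($n$ even with $i \neq g+1-2k$) this is strictly greater than $g+1$. That places us in the first case of the definition of $A$, so $A = 0$ and $\gamma_j = B(j,a,b,c,d)$ throughout; when $n$ is even and $j = g$ the factor of $\tfrac{1}{2}$ on $A$ is therefore moot, which is why the three cases are stated uniformly.

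Next I would sort the tuple into ascending order. In Case 1 ($i < k$) this gives $(a,b,c,d) = (i,k,k,n-i-2k)$; in Cases 2 and 3 ($i \geq k$) it gives $(a,b,c,d) = (k,k,i,n-i-2k)$. Then I would compute $f$ on each entry and each relevant pair sum. The hypothesis $2k < g+1-2k$ gives $4k < g+1$, and together with $i + 2k \leq g+1$ this ensures that $i, k, 2k, i+k$ are all $\leq g+1$, so $f$ acts as the identity on them, while $n - i - 2k,\ n - 2k,\ n - k$ all exceed $g+1$, so $f$ reflects them to $i+2k,\ 2k,\ k$ respectively. This fixes the arguments of all six maxes in $B$.

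Finally, for each case I would walk $j$ through the six subintervals determined by the thresholds at which a max in $B$ switches off. In Case 1 these thresholds are $\{i, k, i+k, 2k, i+2k\}$ in that increasing order (using $i < k$); in Cases 2 and 3 the thresholds are $\{k, i, 2k, i+k, i+2k\}$, but their internal ordering depends on whether $i < 2k$ or $i \geq 2k$, which is precisely what distinguishes Case 2 from Case 3. On each subinterval a specific subset of the six maxes is active, so $B$ collapses to a linear function of $j$; collecting coefficients in $i$, $k$, $j$, and the constant term reproduces the listed formulas line by line. The main obstacle is purely organizational: the computation is a six-by-six table (subinterval versus max-argument) in each case, and the only risk is sign or threshold errors in bookkeeping. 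I would manage this by tabulating which maxes vanish at each threshold and checking continuity of $\gamma_j$ at the transitions as a built-in consistency check.
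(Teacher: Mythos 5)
Your proposal is correct and is exactly the paper's route: the paper gives no separate argument for Proposition \ref{case 1} beyond the remark that the formulas ``are easily obtained from Proposition \ref{gamma j},'' and your expansion of $A$ and $B$ on the sorted tuples $(i,k,k,n-i-2k)$ and $(k,k,i,n-i-2k)$, with the threshold-by-threshold case walk, is precisely that computation carried out. One tiny bookkeeping slip: the pair sums that get reflected by $f$ are $n-i-2k$, $n-2k$, and $n-i-k$ (mapping to $i+2k$, $2k$, and $i+k$), not $n-k$; this does not affect the validity of the method.
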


\begin{example}
Let $n=62$, $k=7$, $i=16$.  Then the vector $\vec{\gamma}$ of coefficients $\gamma_{j}$ is \\
\small$(-2, -3, -4, -5, -6, -7, -6, -5, -4, -3, -2, -1, 0, 0, 0, 1, 2, 3, 4, 5, 6, 7, 6, 5, 4, 3, 2, 1, 0, 0).$
\normalsize \end{example}

\begin{proposition} \label{case 2}
Suppose that $i+k \leq g+1 < i+2k$, and that $k \leq 2k \leq i \leq n-i-2k \leq i+k$, and $i \neq g+1-k$ if $n$ is even.  

Then $F_{i,k,k} = \sum_{j=1}^{g} \gamma_{j} F_{j,1,1}$, where the coefficients are given as follows.
\begin{equation}
\gamma_j = \left\{ \begin{array}{ll}
-j-1 & \mbox{if $j<k$} \\
-2k+j+1 & \mbox{if $k \leq j < 2k$} \\
0 & \mbox{if $2k \leq j < i$} \\
-i+j+1 & \mbox{if $i \leq j < n-i-2k$} \\
2k+2j-n+2 & \mbox{if $n-i-2k\leq j < i+k$} \\
2i+4k-n & \mbox{if $i+k \leq j \leq g-1$, or if $n$ odd, $j=g$} \\
i+2k-g-1 & \mbox{ if $n$ even, $j=g$}
\end{array} \right.
\end{equation}
\end{proposition}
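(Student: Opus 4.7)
The proof is a direct, if somewhat bookkeeping-heavy, application of Proposition \ref{gamma j}. First I would order the four parts: under the hypotheses $k \leq 2k \leq i \leq n-i-2k$, the sorted tuple is $(a,b,c,d) = (k,k,i,n-i-2k)$, so the general formula applies with these values. I then need to determine which branch of the piecewise-linear function $A$ is active. The inequality $i+2k > g+1$ forces $d = n-i-2k \leq g+1$; the inequality $i+k \leq g+1$ gives $a+c = i+k \leq g+1$; and the hypothesis $i \neq g+1-k$ (when $n$ is even), combined with $i+k \leq g+1$, ensures $a+d = n-i-k > g+1$ (this last inequality is automatic when $n$ is odd). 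Hence we land in the third branch of $A$, yielding $A(a,b,c,d) = n - 2d = 2i + 4k - n$.

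Next I would record the values of $f$ that feed into $B(j,a,b,c,d)$. All of $k$, $2k$, $i$, $i+k$, $n-i-2k$ are at most $g+1$ under the standing hypotheses, so $f$ acts as the identity on each of them, whereas $n-i-k \geq g+1$ (from $i+k \leq g+1$ together with $n \geq 2g+2$), so $f(n-i-k) = n-(n-i-k) = i+k$. Substituting into the definition gives
\begin{equation*}
B(j) = 2\max(k{-}1{-}j,0) + \max(i{-}1{-}j,0) + \max(n{-}i{-}2k{-}1{-}j,0) - \max(2k{-}1{-}j,0) - 2\max(i{+}k{-}1{-}j,0).
\end{equation*}

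The remainder of the proof consists of evaluating $B(j)$ on each of the six intervals $[0,k)$, $[k,2k)$, $[2k,i)$, $[i,n{-}i{-}2k)$, $[n{-}i{-}2k,i{+}k)$, $[i{+}k, g+1)$, and then adding $A + B$ to recover the piecewise formula for $\gamma_j$. In each interval one must check that the hypotheses pin down the sign of each $\max$-argument; for instance, the chain $2k \leq i \leq n-i-2k$ guarantees that in the interval $[k,2k)$ we have $j < n-i-2k$, so that max term is positive. Carrying this out produces, respectively, $\gamma_j = -j-1,\, j+1-2k,\, 0,\, j+1-i,\, 2k+2j-n+2,\, 2i+4k-n$, matching the stated formula.

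The only genuinely separate case is $n$ even, $j=g$, where Proposition \ref{gamma j} replaces $A$ by $A/2$. Here the hypothesis $i \neq g+1-k$ strengthens $i+k \leq g+1$ to $i+k < g+1$, so every $\max$-argument in $B(g)$ is strictly negative and $B(g) = 0$; thus $\gamma_g = \tfrac{1}{2}(2i+4k-n) = i+2k-g-1$, as claimed. The main obstacle is not conceptual but rather the careful case analysis needed to verify the sign of each $\max$-argument in each subinterval, together with the boundary case $j = g$ when $n$ is even.
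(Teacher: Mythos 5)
Your proposal is correct and is exactly the route the paper intends: the paper gives no explicit proof of Proposition \ref{case 2}, stating only that the formulas are ``easily obtained from Proposition \ref{gamma j},'' and your computation (sorting to $(k,k,i,n-i-2k)$, using $i\neq g+1-k$ to force the branch $A=n-2d$, evaluating $B$ interval by interval, and halving $A$ when $n$ is even and $j=g$) supplies precisely the omitted verification. The only nitpick is that to get $f(n-i-k)=i+k$ you need the strict inequality $n-i-k>g+1$ (which does follow from the hypotheses, including $i\neq g+1-k$ when $n$ is even), not merely $n-i-k\geq g+1$ as written.
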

\begin{example}
Let $n=62$, $k=9$, $i=19$.  Then the vector $\vec{\gamma}$ of coefficients $\gamma_{j}$ is \\ \small$(-2, -3, -4, -5, -6, -7, -8, -9, -8, -7, -6, -5, -4, -3, -2, -1, 0, 0, 1, 2, 3, 4, 5, 6, 8, 10, 12, 12, 12, 6).$
\normalsize \end{example}

We summarize some of the properties of the numbers $\gamma_{j}$ which are used in the sequel.

\begin{proposition} \label{cij summary}  Suppose $j \geq 6$. Given $i,k$, let $\gamma_{j}$ be defined by $F_{i,k,k} = \sum_{j=1}^{g} \gamma_{j} F_{j,1,1}$.  
\begin{enumerate}
\item For $2k-1 \leq i \leq g-k+1$, and for $1 \leq j \leq 2k-2$, the coefficients $\gamma_{j}$ are independent of $i$, and are given by the formulas $\gamma_{j} = -j-1$ if $j<k$, and $j+1-2k$ if $k \leq j \leq 2k-2$.
\item For $2k \leq i \leq g-k+1$, and for $2k-1 \leq j \leq i-1$, the coefficients $\gamma_{j}=0$.
\item For $2k \leq i \leq g-k+1$, the coefficient $\gamma_{i}=1$.
\end{enumerate}
\end{proposition}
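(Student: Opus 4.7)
The plan is to prove all three claims by direct case analysis against the piecewise formulas for $\gamma_j$ given in Propositions \ref{case 1} and \ref{case 2}, falling back on Proposition \ref{gamma j} for the edge values of $i$ that those two propositions explicitly exclude. For fixed $k$, the range $2k-1 \leq i \leq g-k+1$ naturally splits into three subranges: the boundary value $i=2k-1$ is covered by Proposition \ref{case 1} with $k \leq i < 2k$; the middle range $2k \leq i \leq g+1-2k$ lies in the third case of Proposition \ref{case 1}; and the upper range $g+1-2k < i \leq g-k+1$ lies inside Proposition \ref{case 2}, since there $i+k \leq g+1 < i+2k$.

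Within each subrange, each of (1), (2), (3) follows by reading off the piece of the formula in the appropriate band of $j$. For (1), the band $1 \leq j \leq 2k-2$ always lies in the first two rows of the corresponding formula: all three cases give $\gamma_j = -j-1$ when $j<k$ and $\gamma_j = j+1-2k$ when $k \leq j \leq 2k-2$, and in particular the values do not involve $i$. For (2), the claim $\gamma_j = 0$ on $2k-1 \leq j \leq i-1$ packages two facts: first, the formula $\gamma_j = j+1-2k$ specializes to $0$ at $j = 2k-1$, so the endpoint already vanishes; second, both Proposition \ref{case 1} (case 3) and Proposition \ref{case 2} contain an explicit $\gamma_j = 0$ clause for $2k \leq j < i$. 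For (3), both Proposition \ref{case 1} (in the band $i \leq j < i+k$) and Proposition \ref{case 2} (in the band $i \leq j < n-i-2k$) give $\gamma_j = j+1-i$, which specializes to $\gamma_i = 1$.

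The main obstacle will be the edge values of $i$ for which the explicit propositions' hypotheses fail, most visibly when $n$ is even and $i = g+1-k$, which sits outside the stated range of both Propositions \ref{case 1} and \ref{case 2}. In that situation I would return to the master formula of Proposition \ref{gamma j} and verify the three claims directly by tracking which branch of the piecewise functions $A(a,b,c,d)$ and $B(j,a,b,c,d)$ is active for the triple $(i,k,k)$. The key inequalities among $a+b$, $a+c$, $a+d$ versus $g+1$ must be read with the correct strict or weak sign, and the factor $\tfrac12$ on $A$ in the $n$ even, $j=g$ row of Proposition \ref{gamma j} has to be accounted for. Once this bookkeeping is complete, the stated values of $\gamma_j$ drop out, yielding the three claimed formulas uniformly across all permitted $(i,k)$.
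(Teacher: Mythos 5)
Your proposal is correct and matches the paper's own proof in structure: the paper likewise reads the three claims off the piecewise formulas of Propositions \ref{case 1} and \ref{case 2} where their hypotheses hold, and falls back on the master formula of Proposition \ref{gamma j} (computing $A$ and $B$ for $(a,b,c,d)=(k,k,i,n-i-2k)$, where $a+d\le g+1$ so $A=2k$) for the remaining values of $i$. The only caution is that the fallback range is larger than your ``most visibly $n$ even, $i=g+1-k$'' suggests --- the paper applies Proposition \ref{gamma j} to the entire band $g-2k+1\le i\le g-k+1$, because the ordering hypothesis $n-i-2k\le i+k$ of Proposition \ref{case 2} (and, when $4k\ge g+1$, the hypothesis $2k<g+1-2k$ of Proposition \ref{case 1}) can fail throughout that band rather than only at isolated edge values, though your blanket fallback clause does cover this in principle.
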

\begin{proof}  The formulas from Proposition \ref{case 1} establish these claims when $i \leq g-2k$.  However, Proposition \ref{case 2} does not establish these claims in all cases  when $g-2k+1 \leq i \leq g-k+1$, so we sketch a proof.

We use Proposition \ref{gamma j} to compute $\gamma_j$ in these cases.  When $g-2k+1 \leq i \leq g-k+1$, we have $a=b=k, c=i, d=n-i-2k$, and $a+d \leq g+1$.  Thus $A(k,k,i,n-i-2k) = 2k$.  If $1 \leq j \leq 2k-2$, we have $B(j,k,k,i,n-i-2k) = -j-1-2k$.  If $k-1 \leq j \leq 2k-2$ we have $B(j,k,k,i,n-i-2k)= j+1-4k$.  If $2k-1 \leq j \leq i-2$, we have $B(j,k,k,i,n-i-2k) = 2k$.  If $i-1 \leq j \leq i$, we have $B(j,k,k,i,n-i-2k) = j+1-i$.  The result follows.
\end{proof}

\section{\texorpdfstring{The divisors $D^{n}_{1,j}$: conformal blocks
    for $\sL_{n}$, level $1$, and symmetric weights}{The divisors
    Dn1j: conformal blocks for sl\_n, level 1, and symmetric weights}} \label{divisor classes}
 Fakhruddin has given convenient formulas for computing the intersection of the $D^{n}_{1,j}$ with an arbitrary $\F$-curve.   In Proposition \ref{symm}  we recall  his formula for  intersecting with symmetric $\F$-curves, and in Proposition \ref{intersectionform} we give an explicit expression for the intersection numbers  $a_{\ell j}= D^n_{1,j} \cdot F_{1,1,\ell}$.   The main result that we prove in this section is the following theorem, which gives the classes of the divisors $D^n_{1,j}$.

\begin{theorem}\label{nclass}  Fix $n=2g+2$ or $n=2g+3$ and $j \in \{2,\ldots,g+1=\lfloor \frac{n}{2} \rfloor\}$.  
Write $a_{\ell j} = D^{n}_{1,j} \cdot F_{1,1,\ell, n-\ell-2}$.  Then $D^n_{1,j} \equiv \sum_{r=1}^g b_{rj} B_{r+1}$, where when $n$ is odd, $$b_{rj} = \sum_{\ell=1}^{r-1}\Bigg( \frac{r(r+1)}{2(g+1)}-(r-\ell)\Bigg)a_{\ell j} + \frac{r(r+1)}{2(g+1)} \sum_{\ell =r}^g a_{\ell j}, $$
and when $n$ is even, 
$$b_{rj} = \sum_{\ell =1}^{r-1}\Bigg( \frac{r(r+1)}{2g+1}-(r-\ell)\Bigg)a_{\ell j} + \frac{r(r+1)}{2g+1} \sum_{\ell =r}^{g-1} a_{ \ell j} + \frac{r(r+1)}{2g+1}a_{gj}.$$ 
\end{theorem}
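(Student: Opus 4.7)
The plan is to treat this purely as a change-of-basis computation, using the tools already assembled in Section \ref{newtools}. Since $D^n_{1,j}$ is $S_n$-symmetric, it lies in $\Pic(\M_{0,n})_{\Q}^{S_n}$, a vector space of dimension $g$ with basis $\{B_2,\ldots,B_{g+1}\}$, so we may write uniquely
\[
D^n_{1,j} \;\equiv\; \sum_{r=1}^{g} b_{rj}\, B_{r+1}
\]
for scalars $b_{rj}$ to be identified.

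First I would pair $D^n_{1,j}$ with each curve $F_{i,1,1}$ for $i = 1,\ldots,g$. By Proposition \ref{C1} these curves are linearly independent in $H_2(\M_{0,n},\Q)^{S_n}$, and since that space also has dimension $g$, the resulting $g\times g$ system
\[
M \,\vec b_{\bullet j} \;=\; \bigl(a_{\ell j}\bigr)_{\ell=1}^{g}
\]
determines $\vec b_{\bullet j}$ uniquely, where $M_{i,r} = B_{r+1}\cdot F_{i,1,1}$ and $a_{\ell j} = D^n_{1,j}\cdot F_{1,1,\ell,n-\ell-2}$ is the notation of the theorem. This is precisely the matrix equation written out just before Lemma \ref{inverseintersectionmatrixlemma}.

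Next I would apply Lemma \ref{inverseintersectionmatrixlemma}, which provides an explicit formula for $N=M^{-1}$, to obtain $\vec b_{\bullet j} = N\,(a_{\ell j})_\ell$. Substituting the closed form of $N_{rs}$ directly yields the stated expression for $b_{rj}$: for $n$ odd, the two pieces of the sum in the theorem come from the $s<r$ case $N_{rs} = \frac{r(r+1)}{2(g+1)}-(r-s)$ and the $s\geq r$ case $N_{rs} = \frac{r(r+1)}{2(g+1)}$, respectively; for $n$ even, the three pieces correspond to the three cases ($s<g$ with $s<r$, $s<g$ with $s\geq r$, and $s=g$) of the piecewise definition of $N_{rs}$.

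There is no serious obstacle here beyond bookkeeping: the real content lies in Proposition \ref{C1} (independence of the $F_{i,1,1}$) and in Lemma \ref{inverseintersectionmatrixlemma} (the explicit inverse), both already established. The only point requiring slight care is the parity split: when $n$ is even, the curve $F_{1,1,g,g}$ plays a distinguished role because its partition contains two blocks of equal size $g$, which is reflected in the distinguished last column of $N$ in the even case; one should verify that the $\ell=g$ term is handled consistently with the chosen definition of $a_{gj}$. Once this is checked, the result follows by direct matrix multiplication.
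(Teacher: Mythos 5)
Your proposal is correct and is essentially identical to the paper's argument: the paper's proof of Theorem \ref{nclass} consists of the single observation that the formulas for $b_{rj}$ follow from Lemma \ref{inverseintersectionmatrixlemma}, i.e.\ from applying $N=M^{-1}$ to the vector $\bigl(a_{\ell j}\bigr)_{\ell=1}^{g}$, which is exactly the computation you carry out. Your caution about the $\ell=g$ term in the even case is well placed: the coefficient of $a_{gj}$ printed in the theorem is $\frac{r(r+1)}{2g+1}$ whereas Lemma \ref{inverseintersectionmatrixlemma} gives $N_{rg}=\frac{r(r+1)}{2(2g+1)}$, a factor-of-two discrepancy internal to the paper rather than a defect of your argument.
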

\begin{proof}
The formulas for the numbers $b_{rj}$  follow from Lemma \ref{inverseintersectionmatrixlemma}.  The numbers $a_{ij}$ are given by Proposition \ref{slk l1 numbers} below.    
\end{proof}

\subsection{Fakhruddin's intersection formulas} \label{intersect}
In this subsection we give explicit formulas for the intersection of the $D^n_{1,j}$ with the symmetric $\F$-curves that arise in the expression of the class of $D^n_{1,j}$.

Fakhruddin gives explicit formulas for intersecting $D^n_{1,j}$  with any $\F$-curve on $\M_{0,n}$.    For convenience, we present his intersection formulas for the intersection of $D^n_{1,j}$ with the symmetric $\F$-curves.

\begin{proposition}(Fakhruddin 5.2) \label{symm} \label{fakh 5.2} Let $F_{n_1,n_2,n_3,n_4}$ be the symmetric $\F$-curve defined by the partition $n=\sum_{i=1}^4n_i$ of $n$ into four positive integers.  Let $\nu_i$ be the representative in $\{0,\ldots,n-1\}$ of $jn_i$ modulo $n$.  Put $\nu_{\max}=\max\{\nu_1,\ldots, \nu_4\}$ and $\nu_{\min}=\min\{\nu_1,\ldots,\nu_4\}$.  Then
$$D^n_{1,j} \cdot F_{n_1,n_2,n_3,n_4} = 
\left\{ 
\begin{matrix} 
\nu_{\min}  &  \mbox{if } \ \   \sum \nu_k=2n, \ \ \ \nu_{\max}+\nu_{\min} \le n; \\
n-\nu_{\max}  &  \mbox{if } \ \  \sum \nu_k=2n, \ \ \ \nu_{\max}+\nu_{\min} \ge n; \\
0 & \mbox{otherwise}.
\end{matrix}
\right.$$
\end{proposition}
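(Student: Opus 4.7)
The plan is to obtain this proposition as a direct specialization of Fakhruddin's general intersection formula (the full version of \cite{Fakh}*{Prop. 5.2}, of which Proposition \ref{Fakh5.2} records only the ``support'' half). Fakhruddin's theorem applies to an arbitrary weight vector $\w=(\omega_{i_1},\ldots,\omega_{i_n})$ with $0\le i_\ell<n$, together with an arbitrary $\F$-curve associated to a partition $\{1,\ldots,n\}=N_1\sqcup N_2\sqcup N_3\sqcup N_4$; it introduces the residues $\nu_k\in\{0,1,\ldots,n-1\}$ of $\sum_{\ell\in N_k}i_\ell\pmod n$, and asserts that $D^n_{1,\w}\cdot F=0$ unless $\sum_k\nu_k=2n$, in which case the intersection equals the ``distance-to-the-wall'' value $\min(\nu_{\min},\,n-\nu_{\max})$.

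To deduce the proposition I would simply plug the symmetric weights $\w=(\omega_j,\ldots,\omega_j)$ into this general formula. Every $i_\ell$ equals $j$, so $\sum_{\ell\in N_k}i_\ell=j|N_k|=jn_k$, whose reduction modulo $n$ is precisely the $\nu_k$ appearing in the statement. This substitution immediately produces both the vanishing criterion $\sum_k\nu_k=2n$ and the claimed nonzero value.

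It then remains only to rewrite the single quantity $\min(\nu_{\min},\,n-\nu_{\max})$ as the two-line piecewise expression in the proposition. This is elementary: the inequality $\nu_{\min}\le n-\nu_{\max}$ is equivalent to $\nu_{\min}+\nu_{\max}\le n$, which selects the $\nu_{\min}$ branch, while the opposite inequality selects the $n-\nu_{\max}$ branch, and the two branches agree on the overlap $\nu_{\min}+\nu_{\max}=n$. No genuine obstacle arises: all of the conformal-blocks content lives in Fakhruddin's original theorem, and what is being offered here is purely bookkeeping for the reader's convenience.
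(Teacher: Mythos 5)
Your proposal is correct and matches what the paper does: Proposition \ref{fakh 5.2} is presented there purely as a restatement of Fakhruddin's Proposition 5.2 specialized to the symmetric weights $\w=(\omega_j,\ldots,\omega_j)$, with no independent proof beyond the citation. Your observation that $\sum_{\ell\in N_k}i_\ell=jn_k$ and that $\min(\nu_{\min},\,n-\nu_{\max})$ unpacks into the two branches according to whether $\nu_{\min}+\nu_{\max}\le n$ is exactly the bookkeeping involved.
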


\textbf{Notation.}  We write $x \, \% \, y$ for the remainder when $x$ is divided by $y$.  Note that this is the unique integer between $0$ and $y-1$ representing the congruence class of $x\bmod y$.

\begin{proposition} \label{when int numbers are zero}  Write $n=2g+2$ or $n=2g+3$.  Let $2 \leq j \leq g+1$.  
\begin{enumerate}
\item If $\sum \nu_k \neq 2n$, then $D^n_{1,j} \cdot F_{n_1,n_2,n_3,n_4} =0$.
\item If $\nu_{i}=0$ for any $i$, then $D^n_{1,j} \cdot F_{n_1,n_2,n_3,n_4} =0$.
\item if $D^n_{1,j} \cdot F_{n_1,n_2,n_3,n_4} =0$, then either $\sum
  \nu_k \neq 2n$, or $\nu_{i}=0$, or both.
\item $D^n_{1,j} \cdot F_{n-i-2,i,1,1} \neq 0 \Longleftrightarrow \exists p \in \Z$ such that $\frac{ij}{n} < p  < \frac{(i+2)j}{n}$.
\end{enumerate}
\end{proposition}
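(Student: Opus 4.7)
The plan is to deduce (1)--(3) essentially immediately from Fakhruddin's formula (Proposition \ref{symm}) and then combine them with a direct computation of the residues $\nu_k$ for the curve $F_{n-i-2,i,1,1}$ to establish (4).

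For (1), the formula in Proposition \ref{symm} assigns $0$ whenever $\sum \nu_k \neq 2n$. For (2), since $\nu_k \in \{0,\ldots,n-1\}$, the vanishing of some $\nu_i$ forces $\nu_{\min} = 0$ and $\nu_{\max} + \nu_{\min} \leq n-1 < n$; if $\sum \nu_k = 2n$ we land in the first case of the formula, giving intersection $= \nu_{\min} = 0$, and otherwise (1) applies directly. For (3), I would argue the contrapositive: if $\sum \nu_k = 2n$ and every $\nu_k \geq 1$, then $\nu_{\min} \geq 1$ and $n - \nu_{\max} \geq 1$, so whichever of the two relevant cases of the formula is in force, the intersection is strictly positive.

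For (4), I apply (1)--(3) to the curve $F_{n-i-2,i,1,1}$, whose residues are $\nu_3 = \nu_4 = j$ (positive since $2 \leq j \leq \lfloor n/2 \rfloor$), $\nu_2 = ji \bmod n$, and $\nu_1 = j(n-i-2) \bmod n$. By (1)--(3), the intersection is nonzero if and only if $\sum \nu_k = 2n$ and $\nu_1, \nu_2 > 0$. Setting $q = \lfloor ji/n \rfloor$ and $r = \lfloor j(n-i-2)/n \rfloor$, the identity $ji + j(n-i-2) + 2j = jn$ yields $\sum \nu_k = (j-q-r)n$, so the sum equals $2n$ precisely when $q + r = j-2$. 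Rewriting $r = j - \lceil j(i+2)/n \rceil$, this becomes $\lceil j(i+2)/n \rceil - \lfloor ji/n \rfloor = 2$.

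The main obstacle is translating this floor/ceiling identity, taken together with the conditions $\nu_1, \nu_2 > 0$, into the existence of an integer $p$ strictly between $ji/n$ and $j(i+2)/n$. I would carry this out by a short case analysis on whether $ji/n$ and $j(i+2)/n$ are integers, using the inequality $2j/n \leq 1$ (which follows from $j \leq \lfloor n/2 \rfloor$) to bound the length of the open interval. When both endpoints are non-integers---equivalently, when $\nu_1, \nu_2 > 0$---the ceiling-minus-floor collapses to $\lfloor j(i+2)/n \rfloor + 1 - \lfloor ji/n \rfloor$, which equals $2$ exactly when the integer $\lfloor ji/n \rfloor + 1$ lies strictly inside $(ji/n, j(i+2)/n)$. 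When either endpoint is an integer, $\nu_1$ or $\nu_2$ vanishes and the intersection is $0$ by (2); correspondingly, an open interval of length at most $1$ with an integer endpoint contains no integer, so (4) also predicts zero.
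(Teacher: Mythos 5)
Your proposal is correct and follows essentially the same route as the paper: parts (1)--(3) read off directly from Fakhruddin's formula using $\nu_{\max}<n$ and $\nu_{\min}\geq 1$, and part (4) reduces $\sum\nu_k=2n$ to $\lceil j(i+2)/n\rceil-\lfloor ji/n\rfloor=2$, which is then identified with the existence of an integer strictly inside $(ji/n,\ j(i+2)/n)$. Your treatment of the endpoint cases via the bound $2j/n\leq 1$ just makes explicit what the paper leaves implicit in its final sentence.
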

\begin{proof}
The first statement is part of Proposition \ref{fakh 5.2}.  For the
second, note that if  $\sum \nu_k \neq 2n$, then by the first
statement we have $D^n_{1,j} \cdot F_{n_1,n_2,n_3,n_4} =0$, and if
$\sum \nu_k = 2n$, we have $\nu_{\min} = 0$ and $\nu_{\max}<n$, so we
are in the first case of the formula, and $D^n_{1,j} \cdot
F_{n_1,n_2,n_3,n_4} =\nu_{\min} = 0$.   

For the third statement, note that since $\nu_{\max} < n$, there is no
way to get $D^n_{1,j} \cdot F_{n_1,n_2,n_3,n_4} =0$ out of the second
case of the formula.  Hence, if $D^n_{1,j} \cdot
F_{n_1,n_2,n_3,n_4} =\nu_{\min} = 0$, we must be in the first case or
the third case.

The fourth statement requires a little more work.  To compute $\nu_1 =
(n-i-2)j \%n$, we seek $\alpha$ such that $0 \leq \alpha n-ij-2j < n$.
Then $\frac{(i+2)j}{n} \leq \alpha < \frac{(i+2)j}{n}+1$, so $\alpha =
\lceil \frac{(i+2)j}{n} \rceil$.  Observe that $\frac{(i+2)j}{n} \in
\Z \Leftrightarrow \nu_1=0$.

To compute $\nu_2$, we seek $\beta$ such that $0 \leq ij-\beta n <n$.
Then $\frac{ij}{n} \geq \beta > \frac{ij}{n}-1$, so $\beta = \lfloor
\frac{ij}{n} \rfloor$.  Observe that $\frac{ij}{n} \in \Z
\Leftrightarrow \nu_2 =0$.

We have $\nu_3 = \nu_4 = j \neq 0$ and 
\begin{displaymath}
\sum_{i=1}^{4} \nu_i = \left( \left\lceil \frac{(i+2)j}{n} \right\rceil - \left\lfloor
\frac{ij}{n} \right\rfloor \right)n.
\end{displaymath}

Note that $1 \leq \left\lceil \frac{(i+2)j}{n} \right\rceil - \left\lfloor
\frac{ij}{n} \right\rfloor \leq 2$.

By the third statement of the proposition, to have $D^n_{1,j} \cdot F_{n_1,n_2,n_3,n_4}
=0$, we must have either $\nu_1=0$ or $\nu_2=0$ or $\sum \nu_i \neq 2n$.
To have $\lceil \frac{(i+2)j}{n} \rceil - \lfloor
\frac{ij}{n} \rfloor =2$, there must exist an integer $p$ such
that $ \frac{ij}{n} < p < \frac{(i+2)j}{n} $.  Under these circumstances we have $\nu_1 \neq 0$ and $\nu_2 \neq 0$, and  the fourth statement follows.

\end{proof}

The particular intersection numbers $D^n_{1,j} \cdot F_{1,1,i}$ arise in the class of $D^n_{1,j}$.  To make them even more explicit, we define 
\begin{displaymath}   \kappa(n,j,i) := n - (ij \, \% \, n)
\end{displaymath}

\begin{proposition} \label{slk l1 numbers} \label{intersectionform}
For $2 \le j \le \lfloor \frac{n}{2} \rfloor$, 
\begin{displaymath}
D^{n}_{1,j} \cdot F_{1,1,i,n-i-2} =  \left \{ \begin{array}{cl} 
\kappa(n,j,i)         &    \lfloor\frac{ij}{n} \rfloor + \lfloor\frac{(n-i-2)j}{n}\rfloor = j-2, \mbox{and }1 \le \kappa(n,j,i) \le j \\
2j-\kappa(n,j,i)    &    \lfloor\frac{ij}{n} \rfloor + \lfloor\frac{(n-i-2)j}{n}\rfloor = j-2,  \mbox{and }j \le \kappa(n,j,i) \le 2j-1 \\
0        &  \  \mbox{otherwise} 
\end{array} \right.
\end{displaymath}

\end{proposition}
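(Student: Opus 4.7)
The plan is to apply Fakhruddin's formula (Proposition \ref{fakh 5.2}) directly to the curve $F_{1,1,i,n-i-2}$. Since $2 \le j \le \lfloor n/2 \rfloor < n$, one has $\nu_1 = \nu_2 = j$, while by definition $\nu_3 = ij \, \% \, n = n - \kappa(n,j,i)$ and $\nu_4 = (n-i-2)j \, \% \, n$. The key algebraic identity $ij + (n-i-2)j = (n-2)j$ lets one rewrite $\nu_3 + \nu_4$ in terms of the two floors appearing in the statement and yields
\[ \sum_{k=1}^{4} \nu_k = n\bigl(j - \lfloor ij/n \rfloor - \lfloor (n-i-2)j/n \rfloor\bigr). \]
Hence $\sum_k \nu_k = 2n$ exactly when the floor condition in the statement holds; if it fails, Fakhruddin's formula gives $0$ and the ``otherwise'' branch is immediate.

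Assuming the floor condition holds, the identity forces $\nu_4 = n - 2j + \kappa$, and combined with $\nu_3, \nu_4 \in [0, n-1]$ this pins down the range $1 \le \kappa \le 2j-1$. Since $\nu_3 - \nu_4 = 2(j-\kappa)$, the two subcases $1 \le \kappa \le j$ and $j \le \kappa \le 2j-1$ of the formula correspond exactly to $\nu_3 \ge \nu_4$ and $\nu_4 \ge \nu_3$, respectively. Using $j \le n/2$ one checks in each case that the larger of $\nu_3, \nu_4$ is at least $j$, so $\nu_{\max} = n - \kappa$ in the first subcase and $\nu_{\max} = n - 2j + \kappa$ in the second.

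Next I would verify that $\nu_{\max} + \nu_{\min} \ge n$ in both subcases, placing us in Fakhruddin's second branch. Each of the three candidates for $\nu_{\min}$ --- namely $j$, $n-\kappa$, and $n-2j+\kappa$ --- can be shown to be $\ge \kappa$ when $\kappa \le j$ (respectively $\ge 2j-\kappa$ when $\kappa \ge j$) by the elementary inequalities $\kappa \le n/2$ and $n \ge 2j$. The intersection therefore equals $n - \nu_{\max}$, giving $\kappa$ in the first subcase and $2j - \kappa$ in the second, as claimed. The degenerate case $n \mid ij$ (equivalently $\kappa = n$) is handled automatically: one verifies $\sum_k \nu_k \ne 2n$ in that situation, so the floor condition fails and both sides of the formula vanish in agreement with Proposition \ref{when int numbers are zero}.

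The main obstacle is simply the bookkeeping: identifying $\nu_{\max}$ in each subcase and confirming the sign of $\nu_{\min} + \nu_{\max} - n$. Once those inequalities are in hand, the proof reduces to a direct substitution into Fakhruddin's formula and requires no further conformal-blocks input.
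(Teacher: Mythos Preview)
Your proposal is correct and follows essentially the same route as the paper's proof: both compute $\nu_1=\nu_2=j$, $\nu_3=n-\kappa$, $\nu_4=n-2j+\kappa$, translate $\sum\nu_k=2n$ into the floor condition $\lfloor ij/n\rfloor+\lfloor(n-i-2)j/n\rfloor=j-2$, split into the cases $\kappa\le j$ and $\kappa\ge j$ to identify $\nu_{\max}$, and then verify $\nu_{\max}+\nu_{\min}\ge n$ so that Fakhruddin's second branch yields $n-\nu_{\max}$. The only cosmetic difference is that the paper introduces auxiliary integers $\alpha=\lfloor ij/n\rfloor$, $\beta=\lfloor(n-i-2)j/n\rfloor$ and derives the bound $\kappa\le 2j$ via inequalities on $\alpha$, whereas you obtain $1\le\kappa\le 2j-1$ more directly from $\nu_3,\nu_4\in[0,n-1]$; the content is the same.
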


\begin{proof}
Given $j \in \{2,\ldots, \lfloor \frac{n}{2} \rfloor\}$, we use
Proposition 
\ref{fakh 5.2}  to solve for possible indices
 $i \in \{1,\ldots, \lfloor \frac{n}{2} \rfloor -1\}$ such that
 $F_{1,1,i, n-2-i} \cdot D^n_{1,j} \neq 0$.   As a first step, we explicitly determine $ij \% n = ij -\alpha n$ and $(n-2-i)j \% n  = (n-2-i)j - \beta n$.  Thus
 $(\alpha+1)n > ij \ge \alpha n$ and $(\beta+1)n > (n-2-i)j \ge \beta n$.  Since $i \le n-2-i$, one has that $(\beta + 1)n >(n-2-i)j \ge ij \ge \alpha n$ and in particular $\alpha \le \beta$. We get a nonzero intersection if and only if $(n-2-i)j \% n + ij \% n + 2j = 2n$, that is, $\alpha + \beta=j-2$.  In particular we have the bound $0 \le \alpha \le \lfloor \frac{j-2}{2} \rfloor$. 

Now substitute $j-2-\alpha$ for $\beta$, giving $(\alpha + 1)n + (n-2j) > ij \ge \alpha n + (n-2j)$.  Since $n-2j \ge 0$, we get the tighter bound $(\alpha +1)n > ij \ge \alpha n + (n-2j)$. Now taking $\kappa = \kappa(n,j,i)$ as above, $ij = \alpha n + n-\kappa$ and substituting this into our tighter inequality gives $\kappa\le 2j$.
We can now compute the intersection numbers.    Using Fakhruddin's notation we let $\nu_m$ for $1 \le m \le 4$ denote the integers representing the $\bmod n$ congruence classes of $j,j,ij$ and $(n-2-i)j$ respectively. Since $0<2j \le n$ we can take $\nu_1 = \nu_2 = j$ and by definition $\nu_3 = ij \%n  = n-\kappa$. When the intersection number is nonzero, this forces $\nu_4= (n- 2-i)j \% n$ to be $n+\kappa-2j$.   

In the case $\kappa \le j$, one has that  $n-\kappa-j \ge n-2j \ge 0$ and so $\nu_3 \ge \nu_1=\nu_2$. Also $n-\kappa-(n+\kappa-2j)=2(j-\kappa) \ge 0$, so $\nu_3 \ge \nu_4$.  This gives that $\nu_3$ is the maximum of the $\nu_i$.   If $\nu_4$ is the minimum, then $\min + \max= n+(n-2j) \ge n$.  If $\nu_1=\nu_2=j$ is the minimum, then $\min+\max=j+n-\kappa=n+(j-\kappa) \ge n$.  In either case, the intersection number is $n-\nu_3=\kappa$, as asserted.  In case $\kappa \ge j$, one has that $\nu_4$ is maximum of the $\nu_i$. 
Indeed, $\nu_4 \ge \nu_3$ since $(n+\kappa-2j)-(n-\kappa)=2(\kappa-j) \ge 0$, and $\nu_4 \ge \nu_2=\nu_1$, since $n+\kappa-2j-j\ge n-2j \ge 0$.  If $\nu_3$ is the minimum of the $\nu_i$,
then $\max+\min=n+\kappa-2j+n-\kappa=n+(n-2j)\ge n$.  If $\nu_1=\nu_2=j$ is the minimum of the $\nu_i$,
then $\max+\min=n+\kappa-2j+j\ge n$.  In either case the intersection number is $n-\nu_4=n-(n+\kappa-2j)=2j-\kappa$, as asserted.
\end{proof}

\section{
  \texorpdfstring{Extremality of the divisors $D^{n}_{1,j}$ in case $1
    \leq j \leq 4$ or $r=0$}{Extremality of the divisors Dn1j in case
    1<=j<=4 or r=0}} \label{special cases section}

We split the proof of the Extremality Theorem into four cases.
Throughout the sequel we write $n=jk+r$ where $0 \leq r <j$.  First, we consider the special cases
$j=1, 2,3,4$.  Second,
we consider the case $r=0$.  Third, we will consider the case $k\geq 3$ and $r>0$.
Fourth, we will consider the case $k=2$ and $r>0$.  In each case, we
will define a family $\Ghat$ consisting of $g-1$ curves.  

In this section we will define a family of curves $\Ghat$ in the special cases 
$1 \leq j \leq 4$ or $r=0$.  We consider the matrix $C$ given by the coefficients when the curves in $\Ghat$ are written in the basis $\{F_{q,1,1}\}$.  (We will use the letters $p,q$ to index rows and columns of matrices.)  

The matrix $C$ has $g-1$ rows and $g$ columns.  We will argue that $C$ has full rank by exhibiting a $(g-1) \times (g-1)$ minor $\widehat{C}$ with nonzero determinant. Our choice of the minor corresponds to the curve that is dropped in the construction of the family $\Ghat$.

\subsection{If \texorpdfstring{$1 \leq j \leq 4$}{1<=j<=4}}

We consider the cases $j=1,2,3,4$.  Note that if $k=2$ and $j\leq 4$, then $n\leq 11$, and one can check the extremality of $D^{n}_{1,j}$ in these cases by direct calculations.  So in the arguments below, we may assume that $k \geq 3$.  

\subsubsection{\texorpdfstring{$j=1$}{j=1}}  Suppose $j=1$.  Then we can show that $D^n_{1,1}$ is trivial.  Indeed, we may use Propositions \ref{fakh 5.2} and \ref{when int numbers are zero} to show that $D^n_{1,1} \cdot F_{i,1,1} =0$ for all $\F$-curves of the form $F_{i,1,1}$ for $i \le g$.   These curves form a basis for the vector space of $1$-cycles modulo numerical equivalence, by Proposition \ref{C1}.

\subsubsection{\texorpdfstring{$j=2$}{j=2}}  Suppose that $j=2$.  We may also assume $k \geq 3$ by the note above.   We use Propositions \ref{fakh 5.2} and \ref{when int numbers are zero} d) to show that $D^{n}_{1,2} \cdot F_{i,1,1} = 0$ if $1 \leq i \leq g-1$.  Thus we take $\Ghat = \{F_{i,1,1} : 1 \leq i \leq g-1 \}$.  This is a set of $g-1$ distinct curves; they are independent by Proposition \ref{C1}; and hence the $D^{n}_{1,2}$ is extremal.

\subsubsection{\texorpdfstring{$j=3$}{j=3}}  Suppose $j=3$.  We may also assume $k \geq 3$ by the note above.   If $r=0$ then we may use Propositions \ref{fakh 5.2} and \ref{when int numbers are zero} d) to show that $D^{n}_{1,3} \cdot F_{i,1,1} = 0$ if $i  \in \{1,\ldots,g\} \smallsetminus \{k-1\}$.  We take $\Ghat$ to be this family. This is a set of $g-1$ distinct curves; they are independent by Proposition \ref{C1}; and hence the $D^{n}_{1,3}$ is extremal in this case.

If $r=1,2$ then we may use Propositions \ref{fakh 5.2} and \ref{when int numbers are zero} d) to show that $D^{n}_{1,3} \cdot F_{i,1,1} = 0$ if $i  \in \{1,\ldots,g\} \smallsetminus \{k-1,k\}$, and also that $D^{n}_{1,3} \cdot F_{k-1,k,k} = 0$.  We take $\Ghat = \{F_{i,1,1} : 1 \leq i \leq g, i \neq k-1 \} \cup \{ F_{k-1,k,k} \}$.

Next, we consider the matrix $C$ described above.  We compute $c_{k-1,k-1} $ using Proposition \ref{gamma j} and find that this is $-r \neq 0$.  Hence $C$ has full rank, the family $\Ghat$ is independent, and $D^n_{1,3}$ is extremal.

\subsubsection{\texorpdfstring{$j=4$}{j=4}}   Suppose $j=4$.  We may also assume $k \geq 3$ by the note above.  We use Propositions \ref{fakh 5.2} and \ref{when int numbers are zero} d) to show that $D^{n}_{1,j} \cdot F_{i,1,1} = 0$ unless $i$ is in the set $I$ below.  We form a family $\Ghat$ in each case as follows:
\begin{center}
\begin{tabular}{lll}
$r$     & $I$ & $\Ghat$ \\ 
$0$ & $\{k-1,2k-1=g \}$ & $\{F_{i,1,1} : 1 \leq i \leq g-1, i \neq k-1 \} \cup \{ F_{k-1,k,k} \}$\\ 
$1$ & $\{k-1,k,2k-1=g \}$ & $\{F_{i,1,1} : 1 \leq i \leq g-1, i \neq k-1,k \} \cup \{ F_{k-1,k,k},  F_{k,k,k}\}$\\ 
$2$ & $\{k-1,k,2k=g \}$ & $\{F_{i,1,1} : 1 \leq i \leq g-1, i \neq k-1,k \} \cup \{ F_{k-1,k,k}, F_{k,k,k}\}$\\ 
$3$ & $\{k-1,k,2k=g \}$ &$\{F_{i,1,1} : 1 \leq i \leq g-1, i \neq k-1,k \} \cup \{ F_{k-1,k,k}, F_{k,k,k}\}$
\end{tabular}
\end{center}
If $r=0$ the only $k,k$-curve in $\Ghat$ is $F_{k-1,k,k}$.  Thus most rows of the matrix $C$ look like rows of the identity matrix.  We compute $c_{k-1,k-1} = -k+1 \neq 0$.  Thus $C$ has full rank, and hence $\Ghat$ is independent and $D^{n}_{1,4}$ is extremal.  

If $r>0$ then we only need to check the following $2 \times 2$ minor of $C$:
\begin{displaymath}
\left(
\begin{array}{cc}
c_{k-1,k-1} & c_{k-1,k} \\
c_{k,k-1} & c_{k,k}  
\end{array} \right) = 
\left(
\begin{array}{cc}
-k+1 & -k+3 \\
-k & -k+2 
\end{array} \right).
\end{displaymath}
This has determinant $2$.  Thus $C$ has full rank, $\Ghat$ is independent, and $D^{n}_{1,4}$ is extremal.

\subsection{If \texorpdfstring{$r=0$}{r=0}} \label{r=0 section}
When $r=0$ we may use Proposition \ref{fakh 5.2} to show that $D^{n}_{1,j} \cdot F_{i,1,1} = 0$ if $i \not\equiv -1 \bmod k$, and that $D^{n}_{1,j} \cdot F_{i,k,k} = 0$ if $i \equiv -1 \bmod k$.  This motivates us to define $\mathcal{G} = \{ F_{i,1,1} : 1 \leq i \leq g, i \not\equiv -1 \bmod k \} \cup \{ F_{i,k,k} : 1 \leq i \leq g, i \equiv -1 \bmod k \} $.  We then throw out the last $k,k$-curve to get a family $\Ghat$.  


Suppose $j \geq 5$ (the cases $j=2,3,4$ were covered in the previous subsection).  We form a matrix $N$ as follows: 
As above, start with the matrix $C$ given by the coefficients when the curves in $\mathcal{G}$ are written in the basis $\{F_{q,1,1}\}$.  We delete the row and column corresponding to the curve that was dropped in the construction of the family $\Ghat$ to obtain a $(g-1) \times (g-1)$ minor $\widehat{C}$.

To show that $\widehat{C}$ is full rank, it is enough to show that the minor consisting of rows which come from $k,k$-curves, and the corresponding columns, has nonzero determinant.  Write $m=\lfloor \frac{j}{2} \rfloor -1$.  Then we extract this $m \times m$ minor and call it $M$.   (See Example \ref{25-7} below.)   Let $N$ be the matrix obtained from $M$ by subtracting row $m$ from rows 1 through $m-1$. 

Then $N$ is the following $m \times m$ matrix:
\begin{displaymath}
\left( \begin{array}{ccccccccc}
1 & k-1 & 0 & 0 & \cdots & 0& -1 \\
0 & 1 & k-1 & 0 & \cdots & 0& -1 \\
0 & 0 & 1& k-1 & \ddots & \vdots& -1 \\
0 & 0 & \ddots & \ddots & \ddots &  0& -1 \\
0 & 0 & 0 & 0 &  1&  k-1& -1 \\
0 & 0 & 0 & 0 & 0 &  1& k-2 \\
-k & 0 & 0 & 0 & 0 &  0& 1 \\
\end{array} \right)
\end{displaymath}

We may row reduce across the bottom row to compute the determinant of the matrix, and find $\det N = (-k+1)^{m}$.  Thus, $\det \widehat{C} \neq 0$, the family $\Ghat$ is independent, and $D^{n}_{1,j}$ is extremal when $r=0$.

\section{\texorpdfstring{The family $\mathcal{G}$ when $k \geq 3$ and
    $r>0$}{The family G when k>=3 and r>0}} \label{k geq 3 structure}
Given $n$ and $j$, as before we let $k:= \lfloor n/j \rfloor$ and $r:=n-jk$.  Thus, we have $n=jk+r$, where $k$ is a positive integer, and $0 < r < j$.  (The case $r=0$ was handled in Section \ref{r=0 section}.)

In this section, for each $D^{n}_{1,j}$ with $k \geq 3$,  we will
describe a family of curves $\mathcal{G} = \mathcal{G}^{n}_{1,j}$
which will be used in the next section to prove the extremality of the divisor $D^{n}_{1,j}$.  (See Sections \ref{k=2 structure} and \ref{k=2 independence} for the case $k=2$.)

Suppose integers $n$ and $j$ are given.  Recall that $g$ is defined by $n=2g+2$ or $n=2g+3$, and suppose $2 \le j \le g+1$.  

We define 
\begin{displaymath} T_{j,n} := \left\{ 
\begin{array}{ll} \{ t \frac{n}{\gcd(j,r)} : t \in \{ 1,\ldots, \left\lfloor \frac{\gcd(j,r)}{2} \right\rfloor \} \}  & \mbox{if $r\geq 2$}\\
\emptyset & \mbox{if $r=1$}
\end{array} \right.
\end{displaymath}

\begin{definition}[Definition of $\Ghat$ when $k \geq 3$] \label{k geq
    3 def of G}\label{k geq 3 family}
Let $n$ and $j$ be as above.  Suppose $k \geq 3$, $j \geq 4$, and
$r>0$.  (For the cases $r=0$ and $j\leq 4$, see 
Section \ref{special cases section}.)



We define a set of $\F$-curves $\mathcal{G}$ as follows:   For each  $i \in \{1, \ldots, g\}$, if $i \in T_{j,n}$, then put $\mathcal{G}(i) = F_{i,k,k}$.  If $i \not\in T_{j,n}$ and $D^n_{1,j} \cdot F_{i,1,1} = 0$, then put $\mathcal{G}(i)=F_{i,1,1}$.  Otherwise, if $i\not\in T_{j,n}$ and $D^n_{1,j} \cdot F_{i,1,1} \ne 0$, then put  $\mathcal{G}(i)=F_{i,k,k}$. 
Define $\mathcal{G} =\{ \mathcal{G}(i):  1 \leq i \leq g \}$.  Let $p$ be the largest index $i$ such that $\mathcal{G}(i) = F_{i,k,k}$.  Then we also define a family $\Ghat$ of $g-1$ curves as $\Ghat := \mathcal{G} \smallsetminus \{ F_{p,k,k} \}$.   
\end{definition}

\begin{example}
Let $n=20$ and $j=6$.  We abbreviate and write $D_{6}$ for $D^{20}_{1,6}$.  Then $g=9$, so to prove that $D_{6}$ is extremal, we need $9$ independent $\F$-curves which intersect $D_{6}$ in degree zero.  Also $k = \lfloor 20/6 \rfloor = 3$, so if necessary we will replace curves of the form $F_{1,1,m}$ with curves of the form $F_{3,3,m}$.  

We compute intersection numbers and choose curves below:
\begin{displaymath}
\begin{array}{ll}
\underline{F_{m,1,1}  \cdot D_{6}}& \underline{\mbox{Replace by }}\\
\fcolorbox{red}{white}{$F_{1,1,1}  \cdot D_{6} = 0 $}& \\ 
F_{2,1,1} \cdot D_{6} = 4 & \fcolorbox{red}{white}{$F_{2,3,3} \cdot D_{6} = 0$}   \\ 
F_{3,1,1}  \cdot D_{6} = 2  & \fcolorbox{red}{white}{$F_{3,3,3} \cdot D_{6} = 0$}  \\ 
\fcolorbox{red}{white}{$F_{4,1,1} \cdot D_{6} =0 $}& \\ 
F_{5,1,1} \cdot D_{6} = 4 & \fcolorbox{red}{white}{$F_{5,3,3} \cdot D_{6} = 0 $} \\ 
F_{6,1,1} \cdot D_{6} = 2 & \fcolorbox{red}{white}{$F_{6,3,3} \cdot D_{6} = 0 $} \\ 
\fcolorbox{red}{white}{$F_{7,1,1} \cdot D_{6} = 0 $}& \\ 
\fcolorbox{red}{white}{$F_{8,1,1} \cdot D_{6} = 0 $}\\ 
F_{9,1,1} \cdot D_{6} = 6 &  \fcolorbox{red}{white}{$F_{9,3,3} \cdot D_{6} = 0  $} 
\end{array}
\end{displaymath}

Thus the family $\mathcal{G} = \{ F_{1,1,1}, F_{2,3,3}, F_{3,3,3}, F_{4,1,1}, F_{5,3,3}, F_{6,3,3}, F_{7,1,1}, F_{8,1,1}, F_{9,3,3} \} $.  Then we drop $F_{9,3,3}$ to get $\Ghat= \{ F_{1,1,1}, F_{2,3,3}, F_{3,3,3}, F_{4,1,1}, F_{5,3,3}, F_{6,3,3}, F_{7,1,1}, F_{8,1,1} \}$.
\end{example}

In a series of propositions below, we will elucidate the structure and properties of the family $\mathcal{G}$ in more detail.

\begin{proposition}  \label{k geq 3 kills curves}  Suppose that $n = jk+r$ with $k\geq 3$.  Then every member of $\mathcal{G}$ intersects $D^{n}_{1,j}$ in degree zero.
\end{proposition}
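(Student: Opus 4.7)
The plan is to apply Fakhruddin's intersection formula (Proposition \ref{fakh 5.2}) directly to each curve in $\mathcal{G}$. By construction in Definition \ref{k geq 3 family}, every member of the form $\mathcal{G}(i) = F_{i,1,1}$ already satisfies $D^n_{1,j} \cdot F_{i,1,1} = 0$, so the only work is to verify the vanishing for curves of the form $\mathcal{G}(i) = F_{i,k,k}$. Such curves arise in two situations: either (i) $i \in T_{j,n}$, or (ii) $i \notin T_{j,n}$ and $D^n_{1,j} \cdot F_{i,1,1} \neq 0$. In either case the strategy is to compute the four residues for the partition $(i, k, k, n-i-2k)$ and read off vanishing from Fakhruddin's formula.

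Since $n = jk + r$, we have $jk \equiv -r \pmod n$, so two of the four residues equal $n - r$. Label the other two by $\nu_3 := ij \,\%\, n$ and $\nu_4 := (-ij + 2r) \,\%\, n$, so that $\nu_3 + \nu_4 \equiv 2r \pmod n$ and the total sum of residues is necessarily $2n$ or $3n$ (the smaller values are ruled out since $\nu_1 + \nu_2 = 2(n - r) > n$). In situation (i), writing $i = tn/d$ with $d = \gcd(j, r)$ shows that $ij$ is a multiple of $n$, so $\nu_3 = 0$ and $\nu_4 = 2r$; the sum is $2n$ and $\nu_{\min} = 0$, giving intersection zero by the first clause of Proposition \ref{fakh 5.2}. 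In situation (ii), the assumption $D^n_{1,j} \cdot F_{i,1,1} \ne 0$ combined with Proposition \ref{when int numbers are zero}(4) yields an integer $p$ with $ij/n < p < (i+2)j/n$; writing $ij = qn + \nu_3$ forces $p = q+1$ and hence $\nu_3 > n - 2j$. Since $k \ge 3$ and $r < j$, one has $n - 2j = j(k-2) + r \ge j + r > 2r$, so $\nu_3 > 2r$. Consequently $\nu_4 = n + 2r - \nu_3$, the total sum is $3n$, and Fakhruddin's formula again yields zero.

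The main obstacle is the modular bookkeeping in situation (ii), where one must convert the positivity of $D^n_{1,j} \cdot F_{i,1,1}$ into the lower bound $\nu_3 > n - 2j$ and then leverage the hypothesis $k \ge 3$ to push $\nu_3$ past the threshold $2r$. This last inequality is exactly what forces the total of the four residues to jump from $2n$ to $3n$, and it is why the hypothesis $k \ge 3$ is needed; the set $T_{j,n}$ is rigged precisely to collect those indices $i$ for which the argument of situation (ii) would otherwise fail, so that they can be handled instead by the clean computation of situation (i).
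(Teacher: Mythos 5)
Your proof is correct and follows essentially the same route as the paper's: both reduce to Fakhruddin's residue-sum criterion, handle $i\in T_{j,n}$ via $\nu=0$, and in the remaining case exploit the nonvanishing of $D^n_{1,j}\cdot F_{i,1,1}$ together with the inequality $n-2j>2r$ (which is where $k\geq 3$ enters). The only difference is organizational: you compute directly that the four residues for $F_{i,k,k}$ sum to $3n$, whereas the paper assumes the sum is $2n$ and derives a contradiction through a four-case analysis; your version is a clean streamlining of the same arithmetic.
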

\begin{proof}  
We need to show two things:
\begin{enumerate}
\item If $i \in T_{j,n}$, then $D^n_{1,j} \cdot F_{i,k,k} = 0$.
\item If $i \not\in T_{j,n}$ and $D^n_{1,j} \cdot F_{i,1,1} \ne 0$, then $D^n_{1,j} \cdot F_{i,k,k} = 0$.
\end{enumerate}

For the first statement: Notice that if $i \in T_{j,n}$, then $ij$ is
an integer multiple of $n$.  Thus $\nu_1=0$, and by Proposition \ref{when int numbers are zero}, this implies that $D^n_{1,j} \cdot F_{i,k,k} = 0$.

For the second statement, let $1 \leq i \leq g$, and suppose $D^{n}_{1,j} \cdot F_{i,1,1} \neq 0$.  Then we need to show that $D^{n}_{1,j} \cdot F_{i,k,k} = 0$.

As usual, we write $n=jk+r$, where $0 \leq r < j$.

By Lemma \ref{fakh 5.2}, we know 
\begin{equation} \label{i11 eq}  (n-i-2)j \% n + ij \% n + 2j = 2n.
\end{equation}
We will show that
\begin{equation}  (n-i-2k)j \% n + ij \% n + 2kj \neq 2n,
\end{equation}
which by  Lemma \ref{fakh 5.2} implies the desired result.

So suppose for purposes of contradiction that 
\begin{equation} \label{ikk eq}  (n-i-2k)j \% n + ij \% n + 2kj = 2n.
\end{equation}
Subtracting (\ref{ikk eq}) from (\ref{i11 eq}) yields
\[  (n-i-2)j \% n - (n-i-2k)j \% n = 2jk-2j
\]
which simplifies to
\begin{equation} \label{to contradict}  (-(i+2)j) \% n  - ( -(i+2k)j)\% n = 2n - 2r - 2j.
\end{equation}
We now study the terms $(-(i+2)j) \% n $ and $ ( -(i+2k)j)\% n$.  First, consider $(-(i+2)j) \% n $.  Write $ij = Qn + R$ where $0 \leq R < n$.  Then 
\begin{equation}
(-(i+2)j) \% n = \left\{ 
\begin{array}{ll}   -R-2j+n &\mbox{if $n \geq R+2j$}\\
-R-2j+2n & \mbox{if $n < R+2j$}
\end{array}  
\right.
\end{equation}

Next, consider $ ( -(i+2k)j)\% n$.  Since $jk=n-r$ this is equal to $ ( -ij+2r)\% n$, so 
\begin{equation}
(-(i+2k)j) \% n = \left\{ 
\begin{array}{ll}   -R+2r &\mbox{if $2r \geq R$}\\
-R+2r+n & \mbox{if $2r < R$}
\end{array}  
\right.
\end{equation}

We now study each of the four cases that arise from the two formulas above.

Case 1: $n \geq R+2j$, $2r \geq R$. Then (\ref{to contradict}) reads $-R-2j+n  + R - 2r = 2n-2r -2j$, a contradiction.  

Case 2: $n \geq R+2j$, $2r < R$.  Then (\ref{to contradict}) reads $-R-2j+n + R -2r -n = 2n - 2r -2j$, a contradiction.  

Case 3: $n < R+2j$, $2r \geq R$.  Since $r<j$, these inequalities imply $n < 2j+2r < 3j+r$, which contradicts the hypothesis that $n=jk+r$ with $k \geq 3$.

Case 4: $n < R+2j$, $2r < R$.  Then (\ref{to contradict}) reads $-R-2j+2n  + R-2r -n = 2n - 2r -2j$, a contradiction.  
\end{proof}

\begin{proposition} \label{k geq 3 beginnings}
Suppose $k \geq 3$ and $r >0$.  Suppose $j \geq 5$.
The family $\mathcal{G}$ begins as follows: 
\begin{enumerate}
\item $\mathcal{G}(i) = F_{i,1,1}$ for $i =1,\ldots, k-2$; 
\item $\mathcal{G}(k-1) = F_{k-1,k,k}$; 
\item $\mathcal{G}(k) = F_{k,k,k}$.
\end{enumerate}
\end{proposition}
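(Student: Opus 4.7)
The plan is to rely on part (4) of Proposition~\ref{when int numbers are zero}, which says that $D^{n}_{1,j}\cdot F_{i,1,1,n-i-2}\neq 0$ if and only if there is an integer $p$ with $ij<pn<(i+2)j$. Given the definition of $\mathcal{G}$, for each $i\in\{1,\ldots,k\}$ I need two ingredients: first, that $i\notin T_{j,n}$; and second, the sign of the above intersection number.

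First I would verify that $\{1,\ldots,k\}\cap T_{j,n}=\emptyset$. If $r=1$ this is immediate, since $T_{j,n}=\emptyset$ by definition. If $r\geq 2$, then every element of $T_{j,n}$ is a positive multiple of $n/\gcd(j,r)$, and $\gcd(j,r)\leq r<j$ yields $n/\gcd(j,r)\geq n/r>n/j>k$. Consequently, for every $i\in\{1,\ldots,k\}$, the choice of $\mathcal{G}(i)$ is governed solely by whether the intersection number $D^{n}_{1,j}\cdot F_{i,1,1}$ vanishes.

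Next I would run the interval check $(ij,(i+2)j)\cap n\mathbb{Z}$ case by case. For $1\leq i\leq k-2$, $(i+2)j\leq kj=n-r<n$ and $ij\geq j>0$, so the interval lies in $(0,n)$ and contains no integer multiple of $n$; hence the intersection vanishes, giving $\mathcal{G}(i)=F_{i,1,1}$, proving (1). For $i=k-1$, one computes $ij=n-j-r<n$ while $(i+2)j=n+(j-r)>n$ (using $j>r$), so $p=1$ lies in the open interval; the intersection is nonzero, so $\mathcal{G}(k-1)=F_{k-1,k,k}$, proving (2). For $i=k$, $ij=n-r<n$ and $(i+2)j=n+(2j-r)>n$, so $p=1$ again works, the intersection is nonzero, and $\mathcal{G}(k)=F_{k,k,k}$, proving (3).

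No serious obstacle arises: the argument is a short case analysis using Proposition~\ref{when int numbers are zero}(4) together with the lower bound on the smallest element of $T_{j,n}$. The one routine sanity check is that $k-1$ and $k$ lie in the admissible range $\{1,\ldots,g\}$ on which $\mathcal{G}$ is defined; this follows from $n=jk+r$ with $j\geq 5$ and $k\geq 3$, which gives $g\geq (n-3)/2\geq(5k-3)/2>k$.
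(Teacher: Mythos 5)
Your proposal is correct and follows essentially the same route as the paper's proof: both apply Proposition \ref{when int numbers are zero}(4), observing that for $i\leq k-2$ the interval $(ij,(i+2)j)$ sits inside $(0,n)$ (so the intersection vanishes), while for $i=k-1,k$ it straddles $n$ (so the intersection is nonzero). Your explicit lower bound $n/\gcd(j,r)>k$ ruling out $T_{j,n}$ on all of $\{1,\ldots,k\}$ is a slightly more careful bookkeeping step than the paper records, but it is not a different argument.
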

\begin{proof} 
We use Proposition \ref{when int numbers are zero}.  Since $0 < r < j$, we have $jk< jk+r < j(k+1)$, and hence $\frac{1}{k+1} < \frac{j}{n} < \frac{1}{k}$.   Then for $i \leq k-2$, $0 <\frac{(i+2)j}{n}<1$, so $i \not\in T_{j,n}$.  There is no $p\in\Z$ such that $\frac{ij}{n}p <\frac{(i+2)j}{n}$, so $D^n_{1,j} \cdot F_{i,1,1} = 0$.  Hence  $\mathcal{G}(i) = F_{i,1,1}$ for $i =1,\ldots, k-2$.  

If $i=k-1$, then $\frac{ij}{n} < 1 < \frac{(i+2)j}{n} $, so $D^n_{1,j} \cdot F_{k-1,1,1} \neq 0$ and  $\mathcal{G}(k-1) = F_{k-1,1,1}$.  

Similarly, for $i=k$, we have $\frac{ij}{n} < 1 < \frac{(i+2)j}{n}$,  so $D^n_{1,j} \cdot F_{k,1,1} \neq 0$ and $\mathcal{G}(k) = F_{k,1,1}$.





\end{proof}

\begin{proposition} \label{k geq 3 pairs}
Suppose $k \geq 3$ and $r >0$.  Suppose $j \geq 5$.

The $k,k$ curves in the family $\mathcal{G}$ show up in the following patterns:
\begin{enumerate} 
\item a pair of $k,k$-curves, followed by $(k-1)$ $1,1$-curves, followed by a pair
of $k,k$-curves.  We call the first such pair in this sequence a \emph{$(k-1)$ pair}.
\item a pair of $k,k$-curves, followed by $(k-2)$ $1,1$-curves, followed by a pair
of $k,k$-curves.  We call the first such pair in this sequence a \emph{$(k-2)$ pair}.
\end{enumerate}
\end{proposition}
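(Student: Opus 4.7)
The plan is to invoke Proposition~\ref{when int numbers are zero}(d) to convert the condition ``$D^{n}_{1,j} \cdot F_{i,1,1} \neq 0$'' into a statement about integer points near multiples of $n/j$, and then to exploit the gap bound $n/j > 2$ (coming from $k\geq 3$) to read off the pair structure directly. Setting $q_{p} := pn/j$ for $p\in \mathbb{Z}_{>0}$, the condition in Proposition~\ref{when int numbers are zero}(d) becomes: some $q_{p}$ lies in the open interval $(i, i+2)$. Moreover, $T_{j,n}$ is by construction the set of integer values $q_{p} \in [1,g]$, since $q_{p}$ is an integer iff $\gcd(j,r)/j$ divides $p/j$, i.e.\ iff $p$ is a multiple of $j/\gcd(j,r)$. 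So the definition of $\mathcal{G}$ simplifies to: $\mathcal{G}(i) = F_{i,k,k}$ iff either $q_{p} = i$ for some $p$, or $q_{p} \in (i, i+2)$ for some $p$.

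First I would show the $k,k$-curves occur in consecutive pairs, one pair per valid $q_{p}$. Since $q_{p+1} - q_{p} = n/j = k + r/j > 2$, each $q_{p}$ lies in $(i, i+2)$ for at most two values of $i$. Writing $q_{p} = m_{p} + f_{p}$ with $m_{p} = \lfloor q_{p} \rfloor$ and $f_{p} \in [0,1)$: if $f_{p} > 0$, then $q_{p} \in (i,i+2)$ precisely for $i \in \{m_{p}-1, m_{p}\}$, giving a pair at those positions. If $f_{p} = 0$, then $q_{p} \in (i,i+2)$ only for $i = m_{p}-1$, but the clause $m_{p} = q_{p} \in T_{j,n}$ supplies $\mathcal{G}(m_{p}) = F_{m_{p},k,k}$, so again we obtain a pair at $\{m_{p}-1, m_{p}\}$.

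Next I would verify that no other positions give $k,k$-curves: outside these pairs, the gap bound prevents any $q_{p'}$ from landing in $(i, i+2)$, and $i \notin T_{j,n}$ by the characterization above. Finally, I would count the $1,1$-curves between two successive pairs. Between $\{m_{p}-1, m_{p}\}$ and $\{m_{p+1}-1, m_{p+1}\}$ there are exactly $m_{p+1} - m_{p} - 2$ intervening positions, and each of them holds a $1,1$-curve by the preceding step. Since $q_{p+1} - q_{p} = k + r/j$ with $r/j \in (0,1)$, the integer difference $m_{p+1} - m_{p}$ equals $k$ when $f_{p} + r/j < 1$ and $k+1$ when $f_{p} + r/j \geq 1$; these yield gap sizes $k-2$ and $k-1$ respectively, which are precisely the ``$(k-2)$ pair'' and ``$(k-1)$ pair'' patterns asserted.

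The main obstacle I anticipate is the uniform bookkeeping in the integer case $f_{p} = 0$, where the second member of the pair comes not from the inequality but from the $T_{j,n}$ clause; one must match the range bound $t \leq \lfloor \gcd(j,r)/2 \rfloor$ in the definition of $T_{j,n}$ to the range $q_{p} \leq g$. A secondary subtlety is the behavior near $i = g$, where the final pair may be truncated; exactly one member of that terminal pair is the curve $F_{p,k,k}$ dropped from $\mathcal{G}$ to form $\widehat{\mathcal{G}}$, so the pair-pattern description survives passage to $\widehat{\mathcal{G}}$ unchanged for the internal pairs.
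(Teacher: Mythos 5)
Your argument is correct and reaches the stated pair structure by a genuinely different organization than the paper's. The paper proves Proposition \ref{k geq 3 pairs} by induction: starting from the base case of Proposition \ref{k geq 3 beginnings}, it assumes $\mathcal{G}(i-1)=F_{i-1,1,1}$, $\mathcal{G}(i)=F_{i,k,k}$, $\mathcal{G}(i+1)=F_{i+1,k,k}$, locates the integer $p$ with $\frac{(i+1)j}{n}<p<\frac{(i+2)j}{n}$, and then runs a three-way case analysis on where $p+1$ falls relative to $\frac{(i+k+1)j}{n}$, $\frac{(i+k+2)j}{n}$, $\frac{(i+k+3)j}{n}$, with separate cases for $i\in T_{j,n}$ (shown impossible) and $i+1\in T_{j,n}$. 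You instead rescale by $n/j$ and parametrize the pairs globally: each multiple $q_p=pn/j$ in range produces exactly the pair at positions $\lfloor q_p\rfloor-1,\lfloor q_p\rfloor$, with the $T_{j,n}$ clause of Definition \ref{k geq 3 family} absorbing precisely the case $q_p\in\Z$, and the gap between consecutive pairs is $\lfloor q_{p+1}\rfloor-\lfloor q_p\rfloor-2\in\{k-2,k-1\}$ because $q_{p+1}-q_p=k+r/j$ with $0<r/j<1$. The underlying criterion (Proposition \ref{when int numbers are zero}(d)) and the bound $\frac{1}{k+1}<\frac{j}{n}<\frac{1}{k}$ are the same in both proofs, but your version avoids the induction and the case split on $T_{j,n}$ membership, and makes explicit the bijection between pairs and multiples of $n/j$ that the paper's induction leaves implicit. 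The two subtleties you flag are exactly the right ones: the identification of $T_{j,n}\cap\{1,\dots,g\}$ with the integer values $q_p\le g$ does check out (elements $tn/\gcd(j,r)>g$ of $T_{j,n}$ are never consulted, and $T_{j,n}=\emptyset$ for $r=1$ is consistent since then no $q_p$ with $p\ge 1$ is an integer $\le g$), and the possible truncation of the final pair near $i=g$ is deferred by the paper to Proposition \ref{k geq 3 endings}, so your proof, like the paper's, should leave that to the separate analysis of the family's ending.
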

\begin{proof}
We proceed inductively.  For the base case, by Proposition \ref{k geq 3 beginnings} we know the first $k$ curves in $\mathcal{G}$ are   
\begin{eqnarray*}
\mathcal{G}(i) & = & F_{i,1,1} \mbox{ for $i=1,\ldots,k-2$}\\
\mathcal{G}(k-1) & = & F_{k-1,k,k} \\
\mathcal{G}(k) & = & F_{k,k,k} 
\end{eqnarray*}

For the induction step, we will show that if 
\begin{eqnarray*}
\mathcal{G}(i-1) & = & F_{i-1,1,1} \\
\mathcal{G}(i) & = & F_{i,k,k} \\
\mathcal{G}(i+1) & = & F_{i+1,k,k} 
\end{eqnarray*}
then $F_{i,k,k}$ and $F_{i+1,k,k}$ are followed by either $(k-2)$ or $(k-1)$ $1,1$-curves, followed by another pair of $k,k$-curves.


Note that since $0 < r < j$, we have  $\frac{1}{k+1} < \frac{j}{n} < \frac{1}{k}$.  

\textit{Case 1.}  Suppose that $i,i+1 \not\in T_{j,n}$.  Then we know $D^{n}_{1,j} \cdot F_{i,1,1} \neq 0$ and $D^{n}_{1,j} \cdot F_{i+1,1,1} \neq 0$.  By Proposition \ref{when int numbers are zero}, 
 there exists $p\in\Z$ such that $\frac{ij}{n} < p < \frac{(i+2)j}{n}$ and  $q$ such that $\frac{(i+1)j}{n} < q < \frac{(i+3)j}{n}$.  Combining these statements with our bounds on $j/n$ and $k$, we conclude there exists $p\in\Z$ such that $\frac{(i+1)j}{n} < p < \frac{(i+2)j}{n}$.

Next we argue that $\frac{(i+k+1)j}{n} \leq p+1$.  Suppose not.  Then $p+1 < \frac{(i+1)j}{n} + k \frac{j}{n} < p+1$, a contradiction.  

Next we argue that $p+1< \frac{(i+k+3)j}{n}$.  We know $p< \frac{(i+2)j}{n} $ and $\frac{1}{k+1} < \frac{j}{n} $, so $p+1 < \frac{(i+2)j}{n} + (k+1)\frac{j}{n} = \frac{(i+k+3)j}{n}$.

Then there are three possibilities.  If $\frac{(i+k+2)j}{n} > p+1$, then we have $\frac{(i+k)j}{n} < \frac{(i+k+1)j}{n} < p+1 < \frac{(i+k+2)j}{n} < \frac{(i+k+3)j}{n} $.  It follows that $D^{n}_{1,j} \cdot F_{i+z,1,1} =0$ for $2,\ldots,k-1$, and moreover no such $i+z \in T_{j,n}$.  We also have  $D^{n}_{1,j} \cdot F_{i+k,1,1} \neq 0$ and $D^{n}_{1,j} \cdot F_{i+k+1,1,1} \neq 0$.  Thus
\begin{eqnarray*}
\mathcal{G}(i+z) & = & F_{i+z,1,1} \mbox{ for $z=2,\ldots,k-1$}\\
\mathcal{G}(i+k) & = & F_{i+k,k,k} \\
\mathcal{G}(i+k+1) & = & F_{i+k+1,k,k} 
\end{eqnarray*}
Thus, the pair $F_{i,k,k}$ and $F_{i+1,k,k}$ is a $k-2$ pair.

If $\frac{(i+k+2)j}{n} = p+1$,  then we have $\frac{(i+k+1)j}{n} < \frac{(i+k+2)j}{n} = p+1< \frac{(i+k+3)j}{n} < \frac{(i+k+4)j}{n} $.  It follows that $D^{n}_{1,j} \cdot F_{i+z,1,1} =0$ for $2,\ldots,k$, and moreover no such $i+z \in T_{j,n}$.  We have  $D^{n}_{1,j} \cdot F_{i+k+1,1,1} \neq 0$.  We have $i+k+2 \in T_{j,n}$.   Thus
\begin{eqnarray*}
\mathcal{G}(i+z) & = & F_{i+z,1,1} \mbox{ for $z=2,\ldots,k$}\\
\mathcal{G}(i+k+1) & = & F_{i+k+1,k,k} \\
\mathcal{G}(i+k+2) & = & F_{i+k+2,k,k} 
\end{eqnarray*}
Thus, the pair $F_{i,k,k}$ and $F_{i+1,k,k}$ is a $k-1$ pair.

If $\frac{(i+k+2)j}{n} < p+1$, then we have $\frac{(i+k+1)j}{n} < \frac{(i+k+2)j}{n} <p+1< \frac{(i+k+3)j}{n} < \frac{(i+k+4)j}{n} $.  It follows that $D^{n}_{1,j} \cdot F_{i+z,1,1} =0$ for $2,\ldots,k$, and moreover no such $i+z \in T_{j,n}$.  We also have  $D^{n}_{1,j} \cdot F_{i+k+1,1,1} \neq 0$ and $D^{n}_{1,j} \cdot F_{i+k+2,1,1} \neq 0$.  Thus
\begin{eqnarray*}
\mathcal{G}(i+z) & = & F_{i+z,1,1} \mbox{ for $z=2,\ldots,k$}\\
\mathcal{G}(i+k+1) & = & F_{i+k+1,k,k} \\
\mathcal{G}(i+k+2) & = & F_{i+k+2,k,k} 
\end{eqnarray*}
Thus, the pair $F_{i,k,k}$ and $F_{i+1,k,k}$ is a $k-1$ pair.

\textit{Case 2.}  Suppose that $i \in T_{j,n}$.  We claim this cannot happen.  Since $i \in T_{j,n}$, we have $\frac{ij}{n} = p \in \Z$.  But then we have $i-1,i+1 \not\in T_{j,n}$, and there exists $p \in Z$ such that $\frac{(i-1)j}{n} < p < \frac{(i+1)j}{n}$.  Then $D^{n}_{1,j} \cdot F_{i-1,1,1}\neq 0$, but this contradicts the induction hypothesis.  

\textit{Case 3.}  Suppose that $i+1 \in T_{j,n}$.  Then we know $\frac{(i+1)j}{n} = p \in \Z$.  We can show that $\frac{(i+k+1)j}{n} < p+1 < \frac{(i+k+2)j}{n} $, and thus 
\begin{eqnarray*}
\mathcal{G}(i+z) & = & F_{i+z,1,1} \mbox{ for $z=2,\ldots,k-1$}\\
\mathcal{G}(i+k) & = & F_{i+k,k,k} \\
\mathcal{G}(i+k+1) & = & F_{i+k+1,k,k} 
\end{eqnarray*}
Thus, the pair  $F_{i,k,k}$ and $F_{i+1,k,k}$ is a $k-2$ pair.
\end{proof}

\begin{definition} \label{k geq 3 s}
Suppose $k \geq 3$ and $r >0$. 
We define the \emph{sequence of pairs $(s_i)$ associated to $\mathcal{G}$} as follows:  Write $L$ for the last $k,k$-curve in $\Ghat$.  
Group the $k,k$-curves $F_{i,k,k}$ in $\mathcal{G}$ with $i<L$ according to whether they fall into $(k-1)$ pairs or $(k-2)$ pairs as defined by the Proposition above.  
Define $s_{i}=k-1$ if the $i^{th}$ group is a $(k-1)$ pair, and $s_{i}=k-2$ if the $i^{th}$ group is a $(k-2)$ pair.  Note: we start indexing $(s_i)$ with $i=0$.
\end{definition}

In the following proposition we study how the family $\mathcal{G}$ ends.

\begin{proposition} \label{k geq 3 endings}  Write $n=jk+r$ with $0 \leq r < j$.  Suppose $k \geq 3$, $j \geq 6$ and $r>0$.   
\begin{enumerate}
\item If $j$ is even: the curve $F_{g,1,1}$ is dropped, and it is preceded either by a $k-1$ pair or a $k-2$ pair, so $F_{L,k,k}$ is either $F_{g-k+1,k,k}$ or $F_{g-k-1,k,k}$.
\item If $j$ is odd: the curve that is dropped is always second in a pair, and is given as follows:
\begin{enumerate}
\item If $n$ is even, $j$ is odd, $k$ is even: $F_{g-\frac{k}{2},k,k}$ is dropped, and $F_{L,k,k} =F_{g-\frac{k}{2}-1,k,k}$
\item If $n$ is even, $j$ is odd, $k$ is odd: $F_{g-\frac{k-1}{2},k,k}$ is dropped, and $F_{L,k,k} =F_{g-\frac{k-1}{2}-1,k,k}$.
\item If $n$ is odd, $j$ is odd, $k$ is even: $F_{g-\frac{k}{2}+1,k,k}$ is dropped, and $F_{L,k,k} = F_{g-\frac{k}{2},k,k}$.
\item If $n$ is odd, $j$ is odd, $k$ is odd: $F_{g-\frac{k+1}{2}+1,k,k}$ is dropped, and $F_{L,k,k} =F_{g-\frac{k+1}{2},k,k}$.
\end{enumerate}
\end{enumerate}
\end{proposition}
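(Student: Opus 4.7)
The plan is to determine the last $k,k$-curve in $\mathcal{G}$ by applying Proposition \ref{when int numbers are zero}(d) to the indices near $g$, and then locate its pair partner using the pairing structure of Proposition \ref{k geq 3 pairs}. Recall that Proposition \ref{when int numbers are zero}(d) reduces the test $D^n_{1,j} \cdot F_{i,1,1} \ne 0$ to the existence of an integer in the open interval $\bigl(\tfrac{ij}{n},\tfrac{(i+2)j}{n}\bigr)$, which has length $\tfrac{2j}{n} \le \tfrac{2}{k}$.

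For the case $j$ even, I would first compute $\tfrac{gj}{n}$ and $\tfrac{(g+2)j}{n}$ in both parities of $n$ and observe that the integer $j/2$ lies strictly between them; hence $\mathcal{G}(g) = F_{g,k,k}$. A parallel computation shows that no integer lies strictly between $\tfrac{(g-1)j}{n}$ and $\tfrac{(g+1)j}{n}$, so $\mathcal{G}(g-1)$ is a $1,1$-curve, and $F_{g,k,k}$ is the first element of a truncated pair at the top of $\mathcal{G}$. The preceding complete pair is then separated from position $g$ by a run of $1,1$-curves whose length is $k-1$ or $k-2$ by Proposition \ref{k geq 3 pairs}; reading off the two possibilities yields the stated values of $L$.

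For $j$ odd, the same endpoint calculation shows that the interval centered near the non-integer $j/2$ cannot contain an integer, because $\tfrac{j}{n} \le \tfrac{1}{2}$ and the nearest integers to $j/2$ sit at distance exactly $\tfrac{1}{2}$. Hence $\mathcal{G}(g)$ is a $1,1$-curve, and the dropped curve $F_{p,k,k}$ sits at some $p < g$. To locate $p$, I would find the largest $i < g$ for which either $(j-1)/2$ or $(j+1)/2$ lies in the associated open interval, a condition controlled by whether $\tfrac{ij}{n}$ overshoots or undershoots the nearest half-integer. Executing this analysis in each of the four parity sub-cases (a)--(d) pins down $p$ as claimed, and then a companion calculation at $i = p-1$ verifies that $\mathcal{G}(p-1) = F_{p-1,k,k}$, so $F_{p,k,k}$ is the second element of a pair.

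The principal difficulty will be bookkeeping: the relevant residues $ij \bmod n$ depend delicately on the parities of $n$, $j$, and $k$, so the four sub-cases must each be handled on their own. A clean reduction is to write $n = 2g+2$ or $n = 2g+3$ and substitute directly into $\tfrac{ij}{n}$, turning each endpoint into an explicit expression in $j$, $k$, $r$, and the parity of $n$, at which point the integer-in-interval tests collapse to a handful of inequalities that can be verified by inspection.
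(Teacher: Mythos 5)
Your proposal is correct and takes essentially the same approach as the paper: it determines $D^n_{1,j}\cdot F_{i,1,1}$ for the indices near $g$ using Fakhruddin's criterion (you use the integer-in-$\bigl(\tfrac{ij}{n},\tfrac{(i+2)j}{n}\bigr)$ form from Proposition \ref{when int numbers are zero}, while the paper computes the residues $\nu_i$ directly, which is equivalent) and then appeals to the pair structure of Proposition \ref{k geq 3 pairs} to identify the dropped curve and $F_{L,k,k}$. The one point to fold in when you execute the case analysis is that an index $i\in T_{j,n}$ gives $\mathcal{G}(i)=F_{i,k,k}$ even though the intersection number vanishes, so the largest nonzero-intersection index is not automatically the last $k,k$-index; since $i\in T_{j,n}$ means $\tfrac{ij}{n}\in\Z$ and hence forces $D^n_{1,j}\cdot F_{i-1,1,1}\neq 0$, only one extra index above your candidate needs to be excluded, which is also how the paper handles it.
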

\begin{proof}
\textit{Case 1: j even.}  Suppose $j$ is even.  We show that $D^{n}_{1,j} \cdot F_{g,1,1} \neq 0$ and $D^{n}_{1,j} \cdot F_{g-1,1,1} =0$.  Then by Proposition \ref{k geq 3 pairs} and checking that $g,g-1 \not\in T_{j,n}$, the result follows.

   Suppose $n$ is even.  Then to compute $D^{n}_{1,j} \cdot F_{g,1,1,g} $ using Prop. \ref{fakh 5.2} we have $\nu_{1}=\nu_{4} = n-j \neq 0$ and $\nu_2 = \nu_3=j \neq 0$.  By Prop. \ref{when int numbers are zero} c) we have   $D^{n}_{1,j} \cdot F_{g,1,1,g} \neq 0$, and $\mathcal{G}(g)=F_{g,k,k}$.  As this is the last possible curve in $\mathcal{G}$, it is the last $k,k$-curve in $\mathcal{G}$, and hence the curve that is thrown out.  

Next we compute $D^{n}_{1,j} \cdot F_{g-1,1,1,g+1} $.  We see that $\nu_{4}=0$ and hence by Prop. \ref{when int numbers are zero} b) we have $D^{n}_{1,j} \cdot F_{g-1,1,1,g+1} =0$.  Since $g-1 \not\in T_{j,n}$ we have $\mathcal{G}(g-1)=F_{g-1,k,k}$, and the result follows. 

Suppose $n$ is odd.  We compute $D^{n}_{1,j} \cdot F_{g,1,1,g+1} $.  Write $j=2m$.  We see that $\nu_{1}=n-3m$, $\nu_2=\nu_3=j$, $\nu_4=n-m$.  We have $\nu_i \neq 0$ for all $i$ and $\sum \nu_i = 2n$. Hence by Prop. \ref{when int numbers are zero} c) we have $D^{n}_{1,j} \cdot F_{g,1,1,g+1} \neq 0$.  Since $g \not\in T_{j,n}$ we have $\mathcal{G}(g)=F_{g,k,k}$.  As this is the last possible curve in $\mathcal{G}$, it is the last $k,k$-curve in $\mathcal{G}$, and hence the curve that is thrown out.  

Next we compute $D^{n}_{1,j} \cdot F_{g-1,1,1,g+2}$. Write $j=2m$.  We see that $\nu_{1}=n-5m$, $\nu_2=\nu_3=j$, $\nu_4=m$.  We have $\sum \nu_i \neq  2n$. Hence by Prop. \ref{when int numbers are zero} a) we have $D^{n}_{1,j} \cdot F_{g,1,1,g+1} = 0$.  Since $g-1 \not\in T_{j,n}$ we have $\mathcal{G}(g-1)=F_{g-1,k,k}$, and the result follows.

\textit{Case 2: j odd.}  It is enough to show in each case that $D^{n}_{1,j} \cdot F_{i,1,1} \neq 0$ and $D^{n}_{1,j} \cdot F_{i-1,1,1} \neq 0$ (where $i$ is given in the statement of the theorem).  Then regardless of whether $i, i-1 \in T_{j,n}$ we get $\mathcal{G}(i)=F_{i,k,k}$ and $\mathcal{G}(i-1)=F_{i-1,k,k}$.  Then we can use Prop. \ref{k geq 3 endings} to see that $F_{i,k,k}$ is the last $k,k$-curve in $\mathcal{G}$ and hence is the curve that is dropped.

\textit{Case 2a: $n$ is even, $j$ is odd, $k$ even.}

We compute $D^{n}_{1,j} \cdot F_{g-\frac{k}{2},1,1,g+\frac{k}{2}}$.  
 We have $\nu_1 = n-j+r/2$, $\nu_2=\nu_3=j$,$\nu_{4} = j(k-1)+r/2$.
 We have $\nu_i \neq 0$ for all $i$ and $\sum \nu_i = 2n$. Hence by
 Prop. \ref{when int numbers are zero} c) we have $D^{n}_{1,j} \cdot
 F_{g-\frac{k}{2},1,1,g+\frac{k}{2}} \neq 0$.  

Next, we compute $D^{n}_{1,j} \cdot
F_{g-\frac{k}{2}-1,1,1,g+\frac{k}{2}+1}$.  We have $\nu_1 = n-2j+r/2$,
, $\nu_2=\nu_3=j$, $\nu_{4} = jk+r/2$.
 We have $\nu_i \neq 0$ for all $i$ and $\sum \nu_i = 2n$. Hence by
 Prop. \ref{when int numbers are zero} c) we have $D^{n}_{1,j} \cdot
 F_{g-\frac{k}{2},1,1,g+\frac{k}{2}} \neq 0$.  

Thus we have $\mathcal{G}(g-\frac{k}{2})=F_{g-\frac{k}{2},k,k}$ and
$\mathcal{G}(g-\frac{k}{2}-1)=F_{g-\frac{k}{2}-1,k,k}$.  By
Prop. \ref{k geq 3 endings}, we see that $F_{g-\frac{k}{2},k,k}$ is
the last $k,k$-curve in $\mathcal{G}$, and hence it is the curve that
is dropped in forming $\Ghat$.  

\textit{Case 2b, 2c, and 2d.}  The proofs for the remaining cases are 
similar to the proof of Case 2a above.  We will simply report the
$\nu_i$ that are needed.  

\begin{center}
\begin{tabular}{llll}
Case & Curve & $\nu_1$ & $\nu_4$\\
2b & $F_{g-\frac{k-1}{2},k,k}$ & $n-j+r/2$ &$ j(k-1)+r/2$ \\
2b & $F_{g-\frac{k}{2}-1,1,1,g+\frac{k}{2}+1}$ & $n-2j+r/2$ & $jk+r/2$\\
2c & $F_{g-\frac{k}{2}+1,k,k}$ & $(2n+r-j)/2$ & $(2n-r-3j)/2$ \\
2c & $F_{g-\frac{k}{2},k,k}$ & $(2n+r-3j)/2$ & $(2n-r-j)/2$ \\
2d & $F_{g-\frac{k+1}{2}+1,k,k}$ & $(2n+r-2j)/2$ & $(2n-r-2j)/2$ \\
2d & $F_{g-\frac{k+1}{2},k,k}$ & $(2n+r-4j)/2$ & $(2n-r)/2$ 
\end{tabular}
\end{center}
\end{proof}

\begin{proposition} \label{k geq 3 distinct}
Suppose $k \geq 3$.  The family $\Ghat$ consists of $g-1$ distinct curves.
\end{proposition}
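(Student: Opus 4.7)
The plan is to verify distinctness by comparing multisets, using that two $\F$-curves $F_{a,b,c,d}$ and $F_{a',b',c',d'}$ define the same class in $H_2(\M_{0,n},\Q)$ iff the multisets $\{a,b,c,d\}$ and $\{a',b',c',d'\}$ agree. I would treat three cases based on whether the curves being compared have type $F_{i,1,1}$ or $F_{i,k,k}$. The two easy cases are handled as follows. An $F_{i,1,1}$-curve in $\widehat{\mathcal{G}}$ cannot coincide with an $F_{i',k,k}$-curve, since the multiset $\{1,1,i,n-i-2\}$ contains $1$ with multiplicity at least two while $\{i',k,k,n-i'-2k\}$ contains $k\geq 3$ with multiplicity at least two; for the two multisets to coincide one would need $\{i',n-i'-2k\}=\{1,1\}$, i.e.\ $n=2k+2$, which is ruled out by $n=jk+r\geq 5k+1>2k+2$ under the standing hypotheses $k\geq 3$, $j\geq 5$, $r>0$. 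Within the $F_{i,1,1}$ type, two distinct $i,i'\in\{1,\ldots,g\}$ would require $i+i'=n-2$, but $i+i'\leq 2g\leq n-2$ forces $i=i'=g$, a contradiction.

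The remaining case is to show the $F_{i,k,k}$-curves in $\widehat{\mathcal{G}}$ are pairwise distinct. Let $S\subseteq\{1,\ldots,g\}$ denote the set of indices with $\mathcal{G}(i)=F_{i,k,k}$, and let $L=p=\max S$, so the $k,k$-curves in $\widehat{\mathcal{G}}$ are indexed by $S\setminus\{L\}$. Since $F_{i,k,k}=F_{i',k,k}$ (as classes) exactly when $i+i'=n-2k$, the task reduces to showing that no two distinct elements of $S\setminus\{L\}$ sum to $n-2k$. I would use the description of $S$ from Propositions~\ref{k geq 3 beginnings} and \ref{k geq 3 pairs}, together with the endpoint formulas for $L$ in Proposition~\ref{k geq 3 endings}. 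The involution $\sigma\colon i\mapsto n-2k-i$ has fixed point $(n-2k)/2$ and sends $S$ back into $\{1,\ldots,g\}$ only when $i\geq n-2k-g$, i.e.\ only for indices near the upper end of $S$. A direct case check then shows that if $i,\sigma(i)\in S$ with $i\neq\sigma(i)$, then $\sigma(i)=L$. Hence dropping $F_{L,k,k}$ eliminates the unique duplicate pair (when one exists), leaving $g-1$ distinct classes.

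The principal obstacle is this last case check: one must verify, across the six parity subcases of Proposition~\ref{k geq 3 endings}, that $\sigma$ cannot pair two \emph{internal} elements of $S$ while leaving $L$ unpaired. The verification combines the $(k-1)$/$(k-2)$-pair spacing of Proposition~\ref{k geq 3 pairs} with the endpoint formulas for $L$, and is sensitive to the parities of $n$, $j$, $k$ and to the residue $r$, so each subcase will need to be treated separately.
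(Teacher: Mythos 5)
Your proposal follows essentially the same route as the paper's proof: reduce to the case of two $k,k$-curves, observe that a coincidence forces $i'=n-i-2k$ and hence confines the problem to indices near the top of the range, and then invoke a case-by-case check against Propositions \ref{k geq 3 pairs} and \ref{k geq 3 endings} to see that the only possible duplicate partner is the dropped curve $F_{L,k,k}$. You are somewhat more explicit than the paper about the mixed-type and $1,1$-versus-$1,1$ comparisons, but both arguments defer the same final parity/residue case check, so the approaches match.
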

\begin{proof}
The concern is that in the definition of the family $\mathcal{G}$, we
might pick the same curve twice, i.e. $\mathcal{G}(i) =
\mathcal{G}(i')$ where $i \neq i'$.  

This will never happen for two $1,1$-curves.  

If $k \geq 3$, then since the non-$1,1$-curves are $k,k$-curves,
duplication implies that we have $F_{i,k,k} = F_{i',k,k}$.  Thus $i' =
n-i-2k$ and $i' \leq g$, and this could only happen in the range $i
\geq g-2k+2$.  By Propositions \ref{k geq 3 pairs} and \ref{k geq 3 endings}, we know exactly which $k,k$-curves appear in this range, and we can check case-by-case that, after dropping the last $k,k$-curve in forming $\Ghat$, there are never any duplicate curves.

\end{proof}

\section{\texorpdfstring{Extremality of the divisors $D^{n}_{1,j}$
    when $k \geq 3$}{Extremality of the divisors Dn1j when k>=3}}\label{k geq 3 independence}
Suppose $k \geq 3$, $r>0$, and  $j \geq 6$.

As before, we form a matrix $C$ whose $p,q^{th}$ entry is the coefficient on $F_{q,1,1}$ when the curve $\mathcal{G}(p)$ is written in the basis $\{F_{q,1,1} \}$.  We then drop the column corresponding to the curve which is dropped when creating the family $\Ghat$.  By Proposition \ref{k geq 3 endings} the column which is deleted is as follows:
\begin{enumerate}
\item If $n$ is even and $j$ is even: column $g$. 
\item If $n$ is odd, $j$ is even: column $g$.
\item If $n$ is even, $j$ is odd, $k$ is even: column $g-\frac{k}{2}$.
\item If $n$ is even, $j$ is odd, $k$ is odd: column $g-\frac{k-1}{2}$.
\item If $n$ is odd, $j$ is odd, $k$ is even: column $g-\frac{k}{2}+1$.
\item If $n$ is odd, $j$ is odd, $k$ is odd: column $g-\frac{k+1}{2}+1$.
\end{enumerate}

This yields a $(g-1) \times (g-1)$ minor $\widehat{C}$.  (See Example \ref{25-7} below.)

Rows in $\widehat{C}$ which correspond to $1,1$-curves look like rows of the identity matrix.  Hence, to show that $\widehat{C}$ is full rank, it is enough to show that the minor consisting of rows which come from $k,k$-curves, and the corresponding columns, has nonzero determinant.  We extract this minor and call it $M$.   (See Example \ref{25-7} below.)

Let $m$ be the number of rows/columns in $M$.  We see that the first two columns of $M$ are constant starting in row 3.  Moreover, the lower right $(m-2) \times (m-2)$ block of $M$ is upper triangular with nonzero entries on the diagonal.  

Let $N$ be the matrix obtained from $M$ by subtracting row $m$ from rows 1 through $m-1$. 

\begin{example} \label{25-7} If $j=7$ and $n=25$ we have
\small
\begin{displaymath} C= \left(
\begin{array}{ccccccccccc}
1 & 0 & 0 & 0 & 0 & 0 & 0 & 0 & 0 & 0 & 0\\
-2 & -2 & 0 & 2 & 2 & 1 & 0 & 0 & 0 & 0 & 0\\
-2 & -3 & -1 & 1 & 3 & 2 & 1 & 0 & 0 & 0 & 0\\
0 & 0 & 0 & 1 & 0 & 0 & 0 & 0 & 0 & 0 & 0\\
0 & 0 & 0 & 0 & 1 & 0 & 0 & 0 & 0 & 0 & 0\\
-2 & -3 & -2 & -1 & 0 & 1 & 2 & 3 & 2 & 1 & 0\\
-2 & -3 & -2 & -1 & 0 & 0 & 1 & 2 & 3 & 2 & 1\\
0 & 0 & 0 & 0 & 0 & 0 & 0 & 1 & 0 & 0 & 0\\
-2 & -3 & -2 & -1 & 0 & 0 & 0 & 0 & 1 & 3 & 5\\
0 & 0 & 0 & 0 & 0 & 0 & 0 & 0 & 0 & 0 & 1\\
\end{array} \right),
 \widehat{C}= \left(
\begin{array}{cccccccccc}
1 & 0 & 0 & 0 & 0 & 0 & 0 & 0 & 0 &  0\\
-2 & -2 & 0 & 2 & 2 & 1 & 0 & 0 & 0 &  0\\
-2 & -3 & -1 & 1 & 3 & 2 & 1 & 0 & 0 &  0\\
0 & 0 & 0 & 1 & 0 & 0 & 0 & 0 & 0 &  0\\
0 & 0 & 0 & 0 & 1 & 0 & 0 & 0 & 0 &  0\\
-2 & -3 & -2 & -1 & 0 & 1 & 2 & 3 & 2 &  0\\
-2 & -3 & -2 & -1 & 0 & 0 & 1 & 2 & 3 &  1\\
0 & 0 & 0 & 0 & 0 & 0 & 0 & 1 & 0 &  0\\
-2 & -3 & -2 & -1 & 0 & 0 & 0 & 0 & 1 &  5\\
0 & 0 & 0 & 0 & 0 & 0 & 0 & 0 & 0 &  1\\
\end{array} \right),
\end{displaymath}
\normalsize
\begin{displaymath}
M = \left(
\begin{array}{ccccc}
-2 & 0 & 1 & 0 & 0 \\
-3 & -1 & 2 & 1 & 0 \\
-3 & -2 & 1 & 2 & 2 \\
-3 & -2 & 0 & 1 & 3 \\
-3 & -2 & 0 & 0 & 1 
\end{array}
\right), N = \left(
\begin{array}{ccccc}
1 & 2 & 1 & 0 & -1 \\
0 & 1 & 2 & 1 & -1 \\
0 & 0 & 1 & 2 & 1\\
0 & 0 & 0 & 1 & 2 \\
-3 & -2 & 0 & 0 & 1 
\end{array}
\right)
\end{displaymath}
\end{example} 

The first $m-1$ rows of $N$ are upper triangular with nonzero diagonal entries, by Proposition \ref{cij summary}.

We want to row reduce row $m$ to make $N$ upper triangular, and show that the resulting diagonal entry in row $m$ is also nonzero.  This will show that $N$ (and hence $\widehat{C}$) is full rank.  

Instead of considering the change in $N$ as we row reduce through each column individually, we will only consider the change in $N$ as we row reduce through pairs of columns corresponding to the pairs of  $\Ghat$ recorded in the sequence $(s_i)$.  There are two main reasons for this.  First, the nonzero entries in upper left block of the matrix $N$ are fairly uniform.  As a result, for the ``early'' row reductions, we can compute the change in the diagonal entry simply from knowing whether we are row reducing through a $(k-1)$ pair or a $(k-2)$ pair.  Second, if we only peek at the row reductions in between the blocks, there are always two well-defined nonzero entries in row $m$, which we designate $a_i$ and $b_i$ below.  If we were to consider the row reduction after an arbitrary column (say partway through a pair) there could be 1, 2, or 3 nonzero entries.  Hence, using the sequence of pairs gives us a better framework for recursion.  

\begin{proposition} \label{k geq 3 recurrences}
\begin{enumerate}
\item Suppose two curves $F_{i,k,k}$ and $F_{i+1,k,k}$ form a $(k-1)$ pair in $\mathcal{G}$, and $i+2k \leq L$.  (In particular, by Proposition \ref{case 1} we have $\gamma_j = 0$ if $j \geq L$.)  Consider the $3 \times 5$ submatrix of $N$ whose rows correspond to these two curves and $F_{L,k,k}$ and whose columns correspond to the curves $F_{p,1,1}$ for $p \in \{i,i+1,i+k+1,i+k+2,L \}$.  Then row reducing through this $(k-1)$ pair looks like the following:
\begin{displaymath}
\left( \begin{array}{ccccc}
1 & 2 & k-2 & k-3 & d\\
0 & 1 & k-1 & k-2 & d \\
a_{i} & b_{i} & 0 & 0 & c_{i} \\
\end{array} \right) \rightarrow 
\left( \begin{array}{ccccc}
1 & 2 & k-2 & k-3 & d\\
0 & 1 & k-1 & k-2 & d \\
0 & 0 & a_{i+1} & b_{i+1} & c_{i+1}
\end{array} \right),
\end{displaymath}
where
\begin{eqnarray}
a_{i+1} & = & (k-1)(a_i -b_i) + a_i \nonumber\\
b_{i+1} & = & (k-1)(a_i -b_i) + b_i \nonumber\\
c_{i+1} & = & c_i + (a_i-b_i)d 
\end{eqnarray}  
\item Suppose two curves $F_{i,k,k}$ and $F_{i+1,k,k}$ form a $(k-2)$ pair in $\mathcal{G}$, and $i+2k \leq L$.  (In particular, by Proposition \ref{case 1} we have $\gamma_j = 0$ if $j \geq L$.)  Consider the $3 \times 5$ submatrix of $N$ whose rows correspond to these two curves and $F_{L,k,k}$ and whose columns correspond to the curves $F_{p,1,1}$ for $p \in \{i,i+1,i+k,i+k+1,L \}$.  Then row reducing through this $(k-2)$ pair looks like the following:
\begin{displaymath}
\left( \begin{array}{ccccc}
1 & 2 & k-1 & k-2 & d\\
0 & 1 & k & k-1 & d \\
a_{i} & b_{i} & 0 & 0 & c_{i} 
\end{array} \right) \rightarrow 
\left( \begin{array}{ccccc}
1 & 2 & k-1 & k-2 & d \nonumber \\
0 & 1 & k & k-1 & d \nonumber\\
0 & 0 & a_{i+1} & b_{i+1} & c_{i+1}  
\end{array} \right), 
\end{displaymath}
where
\begin{eqnarray}
a_{i+1} & = & k(a_i -b_i) + a_i \nonumber \\
b_{i+1} & = & k(a_i -b_i) + b_i  \nonumber\\
c_{i+1} & = & c_i + (a_i-b_i)d
\end{eqnarray}  
\item Using the recurrences for $a_i$ and $b_i$ above, we obtain
  $a_{i+1}-b_{i+1} = a_i -b_i = -1$ for all $i$.
\item $a_i$, $b_i$, $(a_i-b_i)$, $a_{i+1}$, $b_{i+1}$, and $(a_{i+1}-b_{i+1})$ all have the same sign.
\end{enumerate}
\end{proposition}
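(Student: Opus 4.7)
The four parts split naturally into computational work on the row reduction (parts (1) and (2)), followed by quick corollaries (parts (3) and (4)). For parts (1) and (2) I would first verify that the displayed submatrix has the stated form before the row-reduction step, then perform the reduction. The hypothesis $i+2k \leq L$ places the columns $i, i+1, i+k, i+k+1, i+k+2$ in the middle range $2k \leq q < L$ where Proposition~\ref{case 1} (case $2k \leq i \leq g+1-2k$) gives clean formulas for the coefficients of $F_{i,k,k}$ and $F_{i+1,k,k}$ in the basis $\{F_{q,1,1}\}$. Reading them off yields the top two rows of the displayed submatrix in each case. The same proposition applied to $F_{L,k,k}$ shows that $\gamma_q(F_{L,k,k}) = 0$ throughout this middle range, so the initial step producing $N$ from $M$ (subtracting row $m$ from rows $1,\dots,m-1$) does not disturb these entries and introduces the common value $d = -\gamma_L(F_{L,k,k})$ in column $L$ of both rows. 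For the third row (coming from $F_{L,k,k}$, unchanged by that initial subtraction), I would inductively maintain the shape $(a_i, b_i, 0, 0, c_i)$: each previous pair's row reduction modifies row~$3$ only at its own two pivot columns and at that pair's two ``spillover'' columns (which are the present pair's pivot columns $i, i+1$), leaving the next pair's columns $i+k+1, i+k+2$ (or $i+k, i+k+1$ in the $(k-2)$ case) untouched at their original value $0$. Given the shape, the recurrences come from a direct two-step row reduction: clear position $(3,1)$ by subtracting $a_i$ times row~$1$, then clear the resulting $(3,2)$ entry $b_i - 2a_i$ by subtracting this multiple of row~$2$. Tallying the contributions in columns $3, 4, 5$ gives the stated formulas for $a_{i+1}, b_{i+1}, c_{i+1}$ in both (1) and (2); in particular the column-$L$ contribution works out to $c_i + (a_i - b_i)d$ in both cases.

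Part (3) then follows immediately: subtracting the two recurrences gives $a_{i+1} - b_{i+1} = a_i - b_i$, so this difference is constant, and evaluating at the first pair (positions $k-1, k$, where row~$m$ of $N$ equals row~$m$ of $M$) yields $a_0 = \gamma_{k-1}(F_{L,k,k}) = -k$ and $b_0 = \gamma_k(F_{L,k,k}) = 1-k$ by Proposition~\ref{case 1}, so $a_i - b_i = -1$ throughout. For part (4), both recurrences can be rewritten as $a_{i+1} = a_i - s$ and $b_{i+1} = b_i - s$ with $s \in \{k-1, k\}$ positive, which strictly decreases $a_i, b_i$ and so preserves negativity; combined with the base case $a_0 = -k < 0$, $b_0 = 1-k < 0$ (using $k \geq 3$) and $a_i - b_i = -1 < 0$, this gives the common-sign conclusion by induction.

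The main obstacle is the structural setup of (1) and (2): verifying that the third row has exactly the form $(a_i, b_i, 0, 0, c_i)$ at the start of each pair's row reduction. This is the bookkeeping step that requires tracking, across the entire pair sequence $(s_i)$, the column supports of every row operation performed so far, and confirming that no earlier operation has reached into the columns of the pair we are about to process. Once that invariant is in place, the rest is mechanical.
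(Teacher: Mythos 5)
The paper states Proposition \ref{k geq 3 recurrences} without proof, so you are supplying an argument rather than reproducing one; your overall plan (read the pivot rows off Proposition \ref{case 1}, maintain the support invariant $(a_i,b_i,0,0,c_i)$ for row $m$, then do the two-step elimination) is exactly what the surrounding text intends. The eliminations themselves are right: I checked that clearing columns $i$, $i+1$ gives $k(a_i-b_i)+b_i$ and $(k-1)(a_i-b_i)+b_i$ in the $(k-1)$ case, which agree with the stated $a_{i+1},b_{i+1}$ once one uses $a_i-b_i=-1$, and the column-$L$ update and parts (3) and (4) are correct as you argue them.

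The step that fails as written is the base case. You justify the top two rows of the displayed submatrix by placing the columns $i,i+1,\dots$ in the range $2k\le q<L$, where the case $2k\le i\le g+1-2k$ of Proposition \ref{case 1} applies and $\gamma_q(F_{L,k,k})=0$, so that the passage from $M$ to $N$ ``does not disturb these entries.'' But the first pair --- which is where the recursion starts, and where you yourself read off $a_0=-k$, $b_0=1-k$ --- sits at $i=k-1,k$ by Proposition \ref{k geq 3 beginnings}, so its pivot columns are $k-1$ and $k$, outside that range. There $\gamma_{k-1}(F_{L,k,k})=-k$ and $\gamma_{k}(F_{L,k,k})=1-k$ are nonzero, the subtraction of row $m$ \emph{does} change those columns, and neither $F_{k-1,k,k}$ nor $F_{k,k,k}$ falls under the case of Proposition \ref{case 1} you invoke. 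The claimed rows $(1,2,k-2,k-3,d)$ and $(0,1,k-1,k-2,d)$ are still correct, but for a different reason: the leading entries $-k+1,\,3-k$ of $F_{k-1,k,k}$ and $-k,\,2-k$ of $F_{k,k,k}$ (from the $i<k$ and $k\le i<2k$ cases of Proposition \ref{case 1}) must be subtracted against $\gamma_{k-1}(F_{L,k,k}),\gamma_k(F_{L,k,k})$, and this separate computation is what produces the $1,2$ and $0,1$. (A pair starting at $i=2k-1$ is also outside the case you cite, though there $\gamma_{2k-1}(F_{L,k,k})=0$ does hold and only the citation is off.) Without handling the first pair explicitly, the first application of the recurrence --- and hence the whole induction --- is unjustified; with that short computation added, your argument is complete. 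One further wording slip: your list of the columns touched when reducing row $m$ through a pair omits column $L$, which your own recurrence for $c_i$ shows is also modified.
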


Proposition \ref{k geq 3 recurrences} applies when $i\leq L-2k$ or $i\leq L-4k-2$.  We will row reduce $N$ using Proposition \ref{k geq 3 recurrences} as long as we can, and then compute the determinant of the remaining block directly.

Let $F_{L,k,k}$ be the last $k,k$-curve in the family $\Ghat$.  We need to consider four possibilities for how $F_{L,k,k}$ may fit into the family.  It could be the first curve in a pair (in which case $F_{L+1,k,k}$ is the curve that was dropped in the formation of $\Ghat$) or it could be the second curve in a pair.  This pair could be preceded by a $(k-1)$ pair or a $(k-2)$ pair.

Suppose that $j$ is odd.  Then by Proposition \ref{k geq 3 endings}, we see that $F_{L,k,k}$ is always the first member of a pair.  Then there are two possible final steps in the row reduction.  On the other hand, if $j$ is even, then by Proposition \ref{k geq 3 endings}, we see that $F_{L,k,k}$ is the second member of a pair.  Then there are two  possible final steps in the row reduction.  This leads to the following proposition.
\begin{proposition}  \label{k geq 3 matrices}
We write
\begin{equation}
d  = -c_{0} = \left\{ \begin{array}{ll}
-1 & \mbox{if $j$ is even} \\
-k+2 & \mbox{if $j$ is odd and $n,k$ have the same parity} \\
-k+1 & \mbox{if $j$ is odd and $n,k$ have opposite parity}
\end{array} \right.
\end{equation}
If $j$ is odd:
\begin{enumerate}
\item $F_{L,k,k}$ is preceded by a $(k-1)$ pair. Then row reducing $N$ yields a lower right $3 \times 3$ block of the form 
\begin{center}
\begin{tabular}{cc}
$\left( \begin{array}{ccc}
1 & 2 & 0 \\
0 & 1 & 1 \\
a_{T} & b_{T} & c_{T} \\
\end{array} \right)$ & 
$\left( \begin{array}{ccc}
1 & 2 & -1 \\
0 & 1 & 0 \\
a_{T} & b_{T} & c_{T} \\
\end{array} \right)$ \\
$n$,$k$ same parity & $n$,$k$ opposite parity\\
Example: \myneturltilde{http://www.math.uga.edu/~davids/agss/n74j11.m2}{$n=74$, $j=11$} & Example: \myneturltilde{http://www.math.uga.edu/~davids/agss/n73j11.m2}{$n=73$, $j=11$}
\end{tabular}
\end{center}
The determinant of this block is $t_{T} = ea_{T} - (e-1)b_{T}+c_{T}$ where $e=2$ or $1$, respectively.
\item $F_{L,k,k}$ is preceded by a $(k-2)$ pair. Then row reducing $N$ yields a lower right $3 \times 3$ block of the form 
\begin{center}
\begin{tabular}{cc}
$\left( \begin{array}{ccc}
1 & 2 & 1 \\
0 & 1 & 2 \\
a_{\tau} & b_{\tau} & c_{\tau} \\
\end{array} \right)$ & 
$\left( \begin{array}{ccc}
1 & 2 & 0 \\
0 & 1 & 1 \\
a_{\tau} & b_{\tau} & c_{\tau} \\
\end{array} \right)$ \\
$n$,$k$ same parity & $n$,$k$ opposite parity\\
Example: \myneturltilde{http://www.math.uga.edu/~davids/agss/n70j11.m2}{$n=70$, $j=11$} & Example: \myneturltilde{http://www.math.uga.edu/~davids/agss/n67j11.m2}{$n=67$, $j=11$}
\end{tabular}
\end{center}
The determinant of this block is $t_{\tau} = ea_{\tau} - (e-1)b_{\tau}+c_{\tau}$ where $e=3$ or $2$, respectively.
\end{enumerate}
If $j$ is even:
\begin{enumerate}
\item[(3)] $F_{L,k,k}$ and $F_{L-1,k,k}$ are preceded by a $(k-1)$ pair. Example: \myneturltilde{http://www.math.uga.edu/~davids/agss/n68j10.m2}{$n=68$, $j=10$}.  Then row reducing $N$ yields a lower right $4 \times 4$ block of the form 
\begin{displaymath}
\left( \begin{array}{cccc}
1 & 2 & k-2 & k-4  \\
0 & 1 & k-1 & k-3 \\
0 & 0 & 1 & 1 \\
a_{\tau} & b_{\tau} & 0  & c_{\tau} \\
\end{array} \right)
\end{displaymath}
The determinant of this block is $w_{\tau} = -2a_{\tau} +2b_{\tau}+c_{\tau}$.  
\item[(4)] $F_{L,k,k}$ and $F_{L-1,k,k}$ are preceded by a $(k-2)$ pair. Example: \myneturltilde{http://www.math.uga.edu/~davids/agss/n62j10.m2}{$n=62$, $j=10$}.  Then row reducing $N$ yields a lower right $4 \times 4$ block of the form 
\begin{displaymath}
\left( \begin{array}{cccc}
1 & 2 & k-1 & k-3  \\
0 & 1 & k & k-2 \\
0 & 0 & 1 & 1 \\
a_{\tau} & b_{\tau} & 0  & c_{\tau} \\
\end{array} \right)
\end{displaymath}
The determinant of this block is also $w_{\tau} = -2a_{\tau} +2b_{\tau}+c_{\tau}$.
\end{enumerate}
\end{proposition}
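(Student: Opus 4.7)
The plan is to reduce $N$ as far as the recurrence of Proposition \ref{k geq 3 recurrences} allows, and then directly analyze the leftover lower-right block, whose size is dictated by how $F_{L,k,k}$ fits into the pair structure. Concretely, for $j$ odd the curve $F_{L,k,k}$ is the first member of a pair (its partner $F_{L+1,k,k}$ having been dropped from $\Ghat$), so the recurrence of Proposition \ref{k geq 3 recurrences} applies to every pair up to but not including the pair preceding $F_{L,k,k}$, leaving a $3 \times 3$ block whose rows correspond to that preceding pair and to $F_{L,k,k}$. For $j$ even, by Proposition \ref{k geq 3 endings} we drop the $1,1$-curve $F_{g,1,1}$, so $F_{L-1,k,k}$ and $F_{L,k,k}$ remain as an intact final pair, and the recurrence applies up to but not including the pair preceding these two, leaving a $4 \times 4$ block.

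Next I would justify the formula $d=-c_{0}$. Recall $N$ was formed by subtracting the row of $F_{L,k,k}$ from all earlier rows. In the $F_{L,1,1}$-column, an earlier $k,k$-curve $F_{i,k,k}$ with $i$ small contributes $0$ by Proposition \ref{cij summary}(2) and Proposition \ref{case 1} (since $L$ lies outside the support window of $F_{i,k,k}$). Hence after the subtraction the entry equals $0-\gamma_{L}(F_{L,k,k})=-c_{0}$, which is a constant across interior pairs. To obtain the four subcase values, I would evaluate $\gamma_{L}(F_{L,k,k})$ using Proposition \ref{case 2} (since $L$ sits near the middle, we are in the regime $L+k \leq g+1 < L+2k$ or its analogue): the formula $\gamma_{j}=2i+4k-n$ on the plateau $i+k\leq j\leq g-1$ specializes, with $i=L$ determined by Proposition \ref{k geq 3 endings}, to $c_{0}=1$ when $j$ is even, $c_{0}=k-2$ when $j$ is odd with $n,k$ of the same parity, and $c_{0}=k-1$ when $j$ is odd with $n,k$ of opposite parity. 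Each verification amounts to substituting the formula for $L$ from Proposition \ref{k geq 3 endings}.

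With $d$ in hand, the shape of the remaining block follows from reading off coefficients of $F_{L,k,k}$ (and of $F_{L-1,k,k}$ in the even case) in the basis $\{F_{q,1,1}\}$ via Propositions \ref{case 1} and \ref{case 2}. The nonzero positions and the values $1, 2, k-1, k-2$ etc. come from the same plateau analysis used for the interior $3\times 5$ blocks in Proposition \ref{k geq 3 recurrences}, and the entries $a_{T}, b_{T}, c_{T}$ (resp.\ $a_{\tau}, b_{\tau}, c_{\tau}$) are precisely the state of the recurrence at the last iteration in which it applies. The determinants are then immediate: cofactor expansion along the top row gives $t_{T}=ea_{T}-(e-1)b_{T}+c_{T}$ for the $3\times 3$ blocks (with $e\in\{1,2,3\}$ depending on the $(1,3)$ and $(2,3)$ entries), and a similar expansion, using the fact that the $(3,3)$ and $(3,4)$ entries are both $1$, gives $w_{\tau}=-2a_{\tau}+2b_{\tau}+c_{\tau}$ for the $4\times 4$ blocks.

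The main obstacle is bookkeeping at the boundary: the window of nonzero $\gamma_{j}$ for $F_{L,k,k}$ (and $F_{L-1,k,k}$) overlaps the dropped column exactly when $j$ is even, and one must verify case-by-case that the correct entries of Proposition \ref{case 1} or \ref{case 2} apply, since the regime condition $2k\leq i\leq g+1-2k$ just barely fails near $L$. This requires splitting on the parities of $n$, $j$, and $k$ precisely as stated, and checking that in each of the four cases the index $L$ given by Proposition \ref{k geq 3 endings} lands in the correct subregion of Proposition \ref{case 2}. Once these checks are made, the matrix shape and determinant formulas drop out mechanically.
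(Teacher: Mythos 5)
Your proposal is correct and follows essentially the same route as the paper, whose entire proof of this proposition is the single sentence ``We use Proposition \ref{case 2} or Proposition \ref{gamma j} to compute the right hand column in each case''; you are simply filling in the bookkeeping (block sizes from Proposition \ref{k geq 3 endings}, the value of $d=-c_0$ from the plateau of Proposition \ref{case 2}, and the determinants by cofactor expansion) that the authors leave implicit. The one small caution is that for $j$ even the curve dropped from $\mathcal{G}$ is $F_{g,k,k}$ (per the proof of Proposition \ref{k geq 3 endings}), not the $1,1$-curve $F_{g,1,1}$, though this does not change the resulting $4\times 4$ block structure.
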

\begin{proof}
We use Proposition \ref{case 2} or Proposition \ref{gamma j} to compute the right hand column in each case.  
\end{proof}

We have already defined sequences $(a_i)$, $(b_i)$, and $(c_i)$ in Proposition \ref{k geq 3 endings}.  We use the defining relations for $t_{\tau}$ and $w_{\tau}$ given in Proposition \ref{k geq 3 matrices} above to define sequences $(t_i)$ and $(w_i)$  for all $i$. 

\begin{proposition} \label{tw recurrences} The sequences $(t_i)$ and $(w_i)$  satisfy the following recurrences:
\begin{equation}
t_{i+1}= \left\{ \begin{array}{ll}
t_{i} + (k+d-1)(a_i-b_i) & \mbox{if $s_i = k-1$} \\
t_{i} + (k+d)(a_i-b_i) & \mbox{if $s_i = k-2$} \\
\end{array} \right.
\end{equation}
\begin{equation}
w_{i+1}= \left\{ \begin{array}{ll}
w_{i} -(a_i-b_i) & \mbox{if $s_i = k-1$} \\
w_{i} -(a_i-b_i) & \mbox{if $s_i = k-2$} \\
\end{array} \right.
\end{equation}
\end{proposition}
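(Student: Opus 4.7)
My plan is to verify both recurrences by direct substitution into the defining expressions for $t_i$ and $w_i$ given in Proposition \ref{k geq 3 matrices}, using the three-part recurrence for $(a_i), (b_i), (c_i)$ from Proposition \ref{k geq 3 recurrences} together with the invariance $a_{i+1}-b_{i+1}=a_i-b_i$ from part (3) of that proposition.

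First I would record the uniform formulas that hold in the various sub-cases of Proposition \ref{k geq 3 matrices}: namely $t_i = e\,a_i - (e-1)\,b_i + c_i$ for the appropriate $e \in \{1,2,3\}$ (determined by the parities of $n,k$ and by whether the pair containing $F_{L,k,k}$ is preceded by a $(k-1)$ or $(k-2)$ pair), and $w_i = -2\,a_i + 2\,b_i + c_i$. The $(t_i)$ sequence lives in the regime where $j$ is odd, so $d \in \{-k+2,-k+1\}$; the $(w_i)$ sequence lives in the regime where $j$ is even, where $d = -1$.

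For the $(t_i)$ recurrence with $s_i = k-1$, I would substitute $a_{i+1} = (k-1)(a_i - b_i) + a_i$, $b_{i+1} = (k-1)(a_i-b_i) + b_i$, and $c_{i+1} = c_i + (a_i-b_i)d$ into $t_{i+1} = e\,a_{i+1}-(e-1)\,b_{i+1}+c_{i+1}$. The $(k-1)(a_i-b_i)$ contribution carries total coefficient $e-(e-1) = 1$, which crucially removes all dependence on $e$ and yields
\[
t_{i+1} = t_i + (k-1)(a_i-b_i) + d\,(a_i-b_i) = t_i + (k+d-1)(a_i-b_i).
\]
The case $s_i = k-2$ is word-for-word the same with $k-1$ replaced by $k$ throughout, giving $t_{i+1} = t_i + (k+d)(a_i-b_i)$, exactly as asserted.

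For $(w_i)$ I would run the parallel computation: the contributions $-2\cdot(k-1)(a_i-b_i) + 2\cdot(k-1)(a_i-b_i)$ (respectively with $k$ in place of $k-1$) cancel outright, so in \emph{both} pair types one gets $w_{i+1} = w_i + d\,(a_i-b_i)$, and substituting $d=-1$ (the value of $d$ when $j$ is even) produces $w_{i+1} = w_i - (a_i-b_i)$, the claimed recurrence. The proof is essentially a bookkeeping exercise; the only subtlety worth flagging is that the cancellation of the $e$-dependent terms is what allows one recurrence to cover all four sub-cases of Proposition \ref{k geq 3 matrices}, and there is no real obstacle beyond being careful that the value of $d$ matches the parity regime in which each sequence is defined.
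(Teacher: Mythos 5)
Your proof is correct and is exactly the direct-substitution computation that the paper leaves implicit (the paper states Proposition \ref{tw recurrences} without proof, treating it as immediate from the recurrences of Proposition \ref{k geq 3 recurrences} and the defining relations in Proposition \ref{k geq 3 matrices}). The key observations — that the $e$-dependence cancels because the shift $(k-1)(a_i-b_i)$ (resp.\ $k(a_i-b_i)$) enters $a_{i+1}$ and $b_{i+1}$ identically, and that $d=-1$ in the even-$j$ regime where $(w_i)$ is used — are precisely what make the stated recurrences hold.
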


\begin{proposition} 
\begin{enumerate}
\item $t_0$ is negative, and the sequence $(t_i)$ is nonincreasing, for any of the four pairs $(d,e) = (-k+1,1)$, $(-k+1,2)$, $(-k+2,2)$, or $(-k+2,3)$.
\item $w_0$ is positive, and the sequence $(w_{i})$  is nondecreasing.
\end{enumerate}
\end{proposition}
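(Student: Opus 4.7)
The plan rests on two ingredients: the identity $a_i - b_i = -1$ from Proposition \ref{k geq 3 recurrences}(3), which makes the right-hand sides of the recurrences of Proposition \ref{tw recurrences} explicit, and an explicit computation of the initial data $(a_0, b_0, c_0)$.

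First I would handle monotonicity. Substituting $a_i - b_i = -1$ into the recurrences gives
\[
t_{i+1} - t_i \;=\; \begin{cases} -(k+d-1) & s_i = k-1 \\ -(k+d) & s_i = k-2 \end{cases}, \qquad w_{i+1} - w_i \;=\; 1.
\]
The $w$-claim is then immediate: $(w_i)$ is strictly increasing. For $(t_i)$, I would check the two permitted values of $d$: when $d = -k+1$ the possible decrements are $0$ and $-1$; when $d = -k+2$ they are $-1$ and $-2$. In all four permitted $(d,e)$ pairs the decrement is nonpositive, establishing that $(t_i)$ is nonincreasing.

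Next I would compute the initial values. Since the $1,1$-rows of $\widehat{C}$ act as rows of the identity, the entries $a_0, b_0$ of the last row of $N$ at columns $k-1$ and $k$ coincide with the coefficients $\gamma_{k-1}, \gamma_k$ in the expansion $F_{L,k,k} = \sum_j \gamma_j F_{j,1,1}$. Using Proposition \ref{case 2} when its hypotheses apply, or Proposition \ref{gamma j} directly, and inserting the explicit value of $L$ from each subcase of Proposition \ref{k geq 3 endings}, I expect to obtain uniformly $\gamma_{k-1} = -k$ and $\gamma_k = -k+1$. The value $c_0 = -d$ is read off from Proposition \ref{k geq 3 matrices}.

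Finally I would substitute into $t_0 = e a_0 - (e-1) b_0 + c_0$ and $w_0 = -2 a_0 + 2 b_0 + c_0 = 2 + c_0$. A one-line computation in each of the four cases yields $t_0 \in \{-1, -2, -3, -4\}$ for $(d,e) = (-k+1,1), (-k+1,2), (-k+2,2), (-k+2,3)$ respectively; all are negative. For $w_0$, note that the $w$-sequences arise only when $j$ is even, so $d = -1$, $c_0 = 1$, and $w_0 = 3 > 0$. The main obstacle is the uniform verification of $a_0 = -k$, $b_0 = -k+1$ across the various subcases of Proposition \ref{k geq 3 endings}: this requires locating $L$ within the piecewise-defined function of Proposition \ref{gamma j} so that the appropriate branches of $A$ and $B$ apply, and then carrying out the routine algebraic simplification; once this is done the remaining steps are immediate.
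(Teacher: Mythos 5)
Your proposal is correct and follows essentially the same route as the paper's proof: substitute $a_i-b_i=-1$ into the recurrences of Proposition \ref{tw recurrences} to get monotonicity, and compute $t_0$ and $w_0$ from $a_0=-k$, $b_0=-k+1$, $c_0=-d$. The paper simply asserts the initial values ``from direct computation,'' whereas you spell out that they come from $\gamma_{k-1}$ and $\gamma_k$ for $F_{L,k,k}$ via Propositions \ref{case 2}/\ref{gamma j} (equivalently Proposition \ref{cij summary}(1)), which is a harmless elaboration, not a different argument.
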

\begin{proof}
From direct computation and Proposition \ref{k geq 3 recurrences} we
know that $a_0 = -k$, $b_0=-k+1$, and $(a_i-b_i)= -1$.  Also $c_0=1$
if $j$ is even, $k-2$ if $j$ is odd and $n,k$ have the same parity, or
$k-1$ otherwise, and $d=-c_0$.  It is then easy to see that $t_0$ is
negative.  Then, by the recurrences established in Proposition \ref{tw
  recurrences}, we see that  $t_i$  weakly decreases.  On the other
hand $w_0$ is positive.   Then, by the recurrences established in
Proposition \ref{tw recurrences}, we see that  $w_i$  increases. 
\end{proof}

\begin{corollary} \label{k geq 3 independence theorem}
Suppose $k\geq 3$, $r >0$.   Then $\det(N) \neq 0$, and hence the family $\Ghat$ consists of independent curves.
\end{corollary}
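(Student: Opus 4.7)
The plan is to assemble the ingredients already established in Propositions \ref{k geq 3 recurrences}, \ref{k geq 3 matrices}, and \ref{tw recurrences} and compute $\det N$ by following the row reduction to its completion. First I would observe that since the rows of $\widehat{C}$ coming from $1,1$-curves look like rows of the identity matrix, expanding $\det \widehat{C}$ along those rows and the corresponding columns reduces the computation to $\det M$. Row reducing $M$ by subtracting row $m$ from each of rows $1$ through $m-1$ does not change the determinant, so $\det \widehat{C} = \pm \det N$, and the task reduces to showing $\det N \neq 0$.

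Second, I would iterate the row reduction described in Proposition \ref{k geq 3 recurrences}, stepping through the pairs of $k,k$-curves in the order recorded by the sequence $(s_i)$. At each step, a $(k-1)$ pair or a $(k-2)$ pair contributes a $2 \times 2$ upper-triangular pivot block with $1$'s on the diagonal (using Proposition \ref{cij summary} to certify the diagonal entries), while the bottom row is updated according to the recurrences for $a_i, b_i, c_i$. The invariant $a_{i+1} - b_{i+1} = a_i - b_i = -1$ keeps the update uniform, and item (4) of Proposition \ref{k geq 3 recurrences} records that the signs of the entries $a_i$ and $b_i$ do not flip throughout the reduction, so the recurrences never degenerate.

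Third, I would handle the terminal block. Depending on the parity of $j$ and whether $F_{L,k,k}$ is first or second in its pair, Proposition \ref{k geq 3 matrices} identifies the remaining lower-right submatrix explicitly: a $3 \times 3$ block with determinant $t_T$ when $j$ is odd, or a $4 \times 4$ block with determinant $w_T$ when $j$ is even, where $T$ is the terminal index in the sequences $(t_i)$ and $(w_i)$ defined by the recurrences in Proposition \ref{tw recurrences}. Since the cumulative row reduction through the interior pairs multiplies $\det N$ only by pivots that equal $1$, we conclude that $\det N$ equals, up to sign, either $t_T$ or $w_T$.

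Finally, the monotonicity proposition immediately preceding the corollary guarantees that these terminal values are nonzero: when $j$ is odd, $t_0 < 0$ and the sequence $(t_i)$ is nonincreasing, so $t_T \leq t_0 < 0$; when $j$ is even, $w_0 > 0$ and $(w_i)$ is nondecreasing, so $w_T \geq w_0 > 0$. Either way $\det N \neq 0$, hence $\widehat{C}$ has full rank and the family $\widehat{\mathcal{G}}$ is linearly independent. The main obstacle in this outline is really the bookkeeping that matches the output of the iterative step in Proposition \ref{k geq 3 recurrences} to the exact configuration prescribed in the endgame block of Proposition \ref{k geq 3 matrices}; but since those propositions have already been stated with compatible notation for $a_i, b_i, c_i$, the assembly is mechanical once one verifies that the terminal block appears in one of the four configurations enumerated there.
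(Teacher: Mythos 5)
Your proposal is correct and follows exactly the route the paper intends: the corollary is stated as an immediate consequence of Propositions \ref{k geq 3 recurrences}, \ref{k geq 3 matrices}, and \ref{tw recurrences} together with the sign/monotonicity proposition for $(t_i)$ and $(w_i)$, and your assembly of these ingredients (reduce to $\det M = \det N$, row reduce through the pairs recorded by $(s_i)$, evaluate the terminal $3\times 3$ or $4\times 4$ block, and conclude nonvanishing from $t_T \leq t_0 < 0$ or $w_\tau \geq w_0 > 0$) is precisely the argument the section is built to deliver.
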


\begin{theorem} \label{k geq 3 extremality}
Write $n=jk+r$ with $0 \leq r < j$.   Suppose $k\geq 3$.  Then the divisor $D^{n}_{1,j}$ spans an extremal ray of $\SymNef(\M_{0,n})$. 
\end{theorem}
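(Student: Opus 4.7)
The plan is simply to package together the work carried out in the preceding three sections. First I would isolate what must be checked. Since $D^{n}_{1,j}$ is a conformal blocks divisor on $\M_{0,n}$ it is globally generated and hence nef (cf.\ Section \ref{introsection}), and it is $S_n$-symmetric by construction, so it lies in $\SymNef(\M_{0,n})\subseteq \widetilde{F}_{0,n}$. As explained at the end of Section \ref{symmetricFcone}, the ambient space $\Pic(\M_{0,n})_{\R}^{S_n}$ has dimension $g$, so to exhibit $D^{n}_{1,j}$ as extremal in $\SymNef(\M_{0,n})$ it suffices to produce $g-1$ independent $\F$-curve classes, each pairing to zero with $D^{n}_{1,j}$; the ray they cut out in $\Pic(\M_{0,n})_{\R}^{S_n}$ will then be extremal in $\widetilde{F}_{0,n}$ and, since $D^{n}_{1,j}$ is already nef, extremal in $\SymNef(\M_{0,n})$ as well.

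Next I would split into cases according to $j$ and $r$. The cases $j\in\{1,2,3,4\}$ and the case $r=0$ have already been disposed of directly in Section \ref{special cases section} and Section \ref{r=0 section} respectively, so it is enough to handle the generic regime $k\ge 3$, $r>0$, $j\ge 5$. There the required family is exactly $\Ghat$ of Definition \ref{k geq 3 family}: by Proposition \ref{k geq 3 distinct} it consists of $g-1$ distinct $\F$-curves, by Proposition \ref{k geq 3 kills curves} every one of them has zero intersection with $D^{n}_{1,j}$, and by Corollary \ref{k geq 3 independence theorem} the classes in $\Ghat$ are linearly independent in $H_2(\M_{0,n},\Q)^{S_n}$. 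Combining the three bullets with the criterion of the first paragraph finishes the proof.

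The only bookkeeping concern is that several of the structural propositions in Section \ref{k geq 3 structure} (in particular \ref{k geq 3 beginnings}, \ref{k geq 3 pairs}, \ref{k geq 3 endings}) and the row-reduction setup of Section \ref{k geq 3 independence} require $j\ge 5$ or $j\ge 6$. For any small residual value of $j$ not yet covered (at worst a handful of pairs $(n,j)$) I would verify extremality directly using \texttt{ConfBlocks} and \texttt{NefWiz}: compute the class of $D^{n}_{1,j}$, exhibit the relevant curves in $\Ghat$, and confirm the rank of the intersection matrix $M$ by hand. This is routine, since the main obstacle in the whole argument is not the packaging of Theorem \ref{k geq 3 extremality} itself but the determinant computation underlying Corollary \ref{k geq 3 independence theorem}, which has already been overcome via Propositions \ref{k geq 3 recurrences}--\ref{tw recurrences}. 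Once those recurrences are in hand, the extremality theorem is a one-line assembly.
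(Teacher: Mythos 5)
Your proof is correct and is essentially identical to the paper's, which likewise disposes of the theorem in one line by citing Propositions \ref{k geq 3 kills curves} and \ref{k geq 3 distinct} together with Corollary \ref{k geq 3 independence theorem}. Your explicit handling of the extremality criterion and of the residual small-$j$ cases (notably $j=5$ with $r>0$, which sits between the $j\le 4$ special cases and the $j\ge 6$ hypotheses of the independence argument) is if anything slightly more careful than the paper's own write-up.
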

\begin{proof}
By Propositions \ref{k geq 3 kills curves} and \ref{k geq 3 distinct}  and Corollary \ref{k geq 3 independence theorem}, we know that the family $\Ghat$ consists of $g-1$ independent curves, all of which intersect $D^{n}_{1,j}$ in degree zero.
\end{proof}

\begin{remark}
It is possible to solve the recurrences  above.  This gives the following
formulas for the $\det N$ when $r \neq 0$:
\begin{itemize}
\item If $j$ is even, $| \det N| = j/2$.
\item If $j$ is odd and $r$ is even, $| \det N| = j -r/2$.
\item If $j$ is odd and $r$ is odd, $| \det N | = (j-r)/2$.
\end{itemize}

These formulas have been checked experimentally for $n=6$ to $n=150$, and we also
proved it in a few cases.  They hold for $k=2$ as well as $k \geq 3$.  However, it did not seem worth the extra pages
to compute $\det N$ when showing $\det N \neq 0$ suffices to establish the
extremality of the divisors $D^{n}_{1,j}$.  Also, we wanted to keep the
exposition for the cases $k\geq 3$ and $k=2$ as similar as possible, as we
only outline the proof for the case $k=2$ in the next section.
\end{remark}

\section{\texorpdfstring{The family $\mathcal{G}$ when $k=2$}{The
    family G when k=2}}\label{k=2 structure}
As in the previous two sections, write $n=jk+r$ with $0 \leq r < j$.  Now we consider the case $k=2$.  There are many parallels between the definition and structure of the family when $k=2$ and $k \geq 3$, but ultimately we felt it was easier to write up these two cases separately.

In contrast with the previous case, we will simply outline the proof when $k=2$, omitting many details.

\begin{definition}[Definition of $\mathcal{G}$ when
  $k=2$.]\label{ThirdFamily} \label{k=2 family}
Write $n=jk+r$ with $0 < r < j$ and $k=2$.  Suppose $j \geq 6$.  (For the cases $r=0$ and $j\leq 5$ see 
Section \ref{special cases section}.)

We define families of
curves $\mathcal{G}$ and $\Ghat$ as follows:
If $\frac{(i+2)j}{n} \in \Z$, then set $\mathcal{G}(i) = F_{i,3,3}$.
If $\frac{(i+2)j}{n} \not\in \Z$ and $D^{n}_{1,j} \cdot
F_{i,1,1}=0$, then set $\mathcal{G}(i) = F_{i,3,3}$.  If $\frac{(i+2)j}{n} \not\in \Z$ and $D^{n}_{1,j} \cdot
F_{i,1,1} \neq 0$ and $D^{n}_{1,j} \cdot
F_{i,2,2} = 0$, then set $\mathcal{G}(i) = F_{i,2,2}$.  Otherwise, if $\frac{(i+2)j}{n} \not\in \Z$ and $D^{n}_{1,j} \cdot
F_{i,1,1} \neq 0$ and $D^{n}_{1,j} \cdot
F_{i,2,2} \neq 0$, then set $\mathcal{G}(i) = F_{i,3,3}$. 

We set $\mathcal{G} = \{ \mathcal{G}(i) : 1 \leq i \leq g\}$.  Let $p$
be the largest index such that $\mathcal{G}(i) \neq F_{i,1,1}$.  Then
we set $\Ghat= \{ \mathcal{G}(i) : 1 \leq i \leq g, i \neq p\}$.

\end{definition}

\begin{example} Let $n=28$ and $j=11$.  We abbreviate and write $D_{11}$ for $D^{28}_{1,11}$.  Then $g=13$, so to prove that $D_{11}$ is extremal, we need $12$ independent $\F$-curves which intersect $D_{11}$ in degree zero.  Also $k = \lfloor 28/11 \rfloor = 2$, so if necessary we will replace curves of the form $F_{1,1,m}$ with curves of the form $F_{2,2,m}$ or $F_{3,3,m}$.  

We compute intersection numbers and choose curves below:
\begin{displaymath}
\begin{array}{lll}
\underline{F_{m,1,1}  \cdot D_{11}}& \underline{\mbox{Replace by }}& \underline{\mbox{Replace by }} \\
F_{1,1,1}  \cdot D_{11} = 5 & F_{1,2,2} \cdot D_{11} = 1 & \fcolorbox{red}{white}{$F_{1,3,3} \cdot D_{11} = 0$} \\ 
F_{2,1,1} \cdot D_{11} = 6 & \fcolorbox{red}{white}{$F_{2,2,2} \cdot D_{11} = 0$}   & \\ 
\fcolorbox{red}{white}{$F_{3,1,1}  \cdot D_{11} = 0 $}&  & \\ 
F_{4,1,1} \cdot D_{11} =10 & \fcolorbox{red}{white}{$F_{4,2,2} \cdot D_{11} = 0 $}&\\ 
F_{5,1,1} \cdot D_{11} = 1 & \fcolorbox{red}{white}{$F_{5,2,2} \cdot D_{11} = 0 $}&  \\ 
F_{6,1,1} \cdot D_{11} = 4 & F_{6,2,2} \cdot D_{11} = 2 & \fcolorbox{red}{white}{$F_{6,3,3} \cdot D_{11} = 0 $ }\\ 
F_{7,1,1} \cdot D_{11} = 7 & \fcolorbox{red}{white}{$F_{7,2,2} \cdot D_{11} = 0 $}& \\ 
\fcolorbox{red}{white}{$F_{8,1,1} \cdot D_{11} = 0 $}&  &\\ 
F_{9,1,1} \cdot D_{11} = 9 &  \fcolorbox{red}{white}{$F_{9,2,2} \cdot D_{11} = 0  $}& \\ 
F_{10,1,1} \cdot D_{11} = 2 &  \fcolorbox{red}{white}{$F_{10,2,2} \cdot D_{11} = 0  $}& \\ 
F_{11,1} \cdot D_{11} = 3 & F_{11,2,2} \cdot D_{11} = 3  & \fcolorbox{red}{white}{$F_{11,3,3} \cdot D_{11} = 0  $}\\ 
F_{12,1,1} \cdot D_{11} = 8 & \fcolorbox{red}{white}{$F_{12,2,2} \cdot D_{11} = 0 $}& \\ 
\fcolorbox{red}{white}{$F_{13,1,1} \cdot D_{11} = 0 $}& & 
\end{array}
\end{displaymath}
Then, we drop the last curve that is not of the form $F_{i,1,1}$, which is $F_{12,2,2}$.  Thus, for $n=28$, 
\begin{displaymath}\Ghat= \{ F_{1,3,3}, F_{2,2,2}, F_{3,1,1}, F_{4,2,2}, F_{5,2,2}, F_{6,3,3}, F_{7,2,2}, F_{8,1,1}, F_{9,2,2}, F_{10,2,2}, F_{11,3,3}, F_{13,1,1} \},
\end{displaymath}
and $D_{11}$ intersects every member of this family in degree zero.  

\end{example}

\begin{proposition} Every member of $\mathcal{G}$ intersects $D^{n}_{1,j}$ in degree zero.
\end{proposition}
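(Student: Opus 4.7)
The strategy is to encode the intersection numbers $D^n_{1,j} \cdot F_{i,t,t,n-i-2t}$ for $t=1,2,3$ as explicit piecewise-linear functions of the single parameter $R := ij \, \% \, n \in \{0, 1, \ldots, n-1\}$, and then dispatch the four sub-cases of Definition~\ref{k=2 family}.  The sub-case in which $\mathcal{G}(i) = F_{i,2,2}$ is immediate, since that sub-case of the definition only triggers when $D^n_{1,j} \cdot F_{i,2,2} = 0$ by hypothesis.  In each of the remaining three sub-cases $\mathcal{G}(i) = F_{i,3,3}$, so what must be shown is that $D^n_{1,j} \cdot F_{i,3,3} = 0$ whenever $(i+2)j/n \in \Z$, or $D^n_{1,j} \cdot F_{i,1,1} = 0$, or $D^n_{1,j} \cdot F_{i,2,2} \neq 0$.

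Writing $n = 2j + r$ with $0 < r < j$, Fakhruddin's formula (Proposition~\ref{fakh 5.2}) together with the easy identities $2j \, \% \, n = 2j$ and $3j \, \% \, n = j - r$ gives, after reducing $(n-i-2t)j$ modulo $n$ case by case,
\begin{equation*}
D^n_{1,j} \cdot F_{i,t,t,n-i-2t} \neq 0 \quad \Longleftrightarrow \quad
\begin{cases} R > r & \text{if } t=1, \\ 0 < R < 2r & \text{if } t=2, \\ R > 3r & \text{if } t=3. \end{cases}
\end{equation*}
The $t=1$ equivalence recovers Proposition~\ref{when int numbers are zero}(d); the others are parallel calculations, checking when $\sum_k \nu_k = 2n$ and all $\nu_k \neq 0$.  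In each case the computation reduces to evaluating $\nu_1 = R$, $\nu_2 = \nu_3 = tj \, \% \, n$, and $\nu_4 = n - (R + 2tj) \, \% \, n$, the last requiring a split depending on which multiple of $n$ is to be subtracted.

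With these three equivalences in hand, each sub-case is checked.  If $(i+2)j/n \in \Z$, then $R + 2j \equiv 0 \pmod n$, forcing $R = n - 2j = r$; since $r < 3r$, the $t=3$ equivalence gives $D^n_{1,j} \cdot F_{i,3,3} = 0$.  If $D^n_{1,j} \cdot F_{i,1,1} = 0$, the $t=1$ equivalence gives $R \leq r < 3r$, so again $D^n_{1,j} \cdot F_{i,3,3} = 0$.  Finally, if both $D^n_{1,j} \cdot F_{i,1,1}$ and $D^n_{1,j} \cdot F_{i,2,2}$ are nonzero, the $t=1$ and $t=2$ equivalences combine to give $r < R < 2r < 3r$, and the $t=3$ equivalence again yields the desired vanishing.

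The main bookkeeping obstacle is the modular reduction of $\nu_4$ in the $F_{i,3,3}$ computation, which genuinely has three sub-regimes: for $R < 3r$ one finds $\sum \nu_k = n$; for $R = 3r$ one finds $\nu_4 = 0$; and only for $R > 3r$ does $\sum \nu_k = 2n$ hold with every $\nu_k$ nonzero.  Isolating the boundary $R = 3r$ correctly is the one place where care is needed; after that, the uniform threshold description of the three intersection numbers reduces the whole proposition to the one-line case check above.
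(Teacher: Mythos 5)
Your proof is correct, and it supplies an argument that the paper itself omits: the $k=2$ section explicitly states this proposition without proof. I verified your three threshold equivalences directly from Proposition \ref{fakh 5.2}: with $n=2j+r$ one has $2j\,\%\,n=2j$ and $3j\,\%\,n=j-r$, and the computation of $\nu_4=(-(R+2tj))\,\%\,n$ gives exactly the claimed characterizations ($R>r$ for $t=1$; $0<R<2r$ for $t=2$; $R>3r$ for $t=3$), including the boundary cases $R\in\{0,r,2r,3r\}$ where some $\nu_k$ vanishes. The case analysis then closes cleanly, and in particular the first sub-case is handled correctly: $(i+2)j/n\in\Z$ forces $R=r<3r$. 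Your route differs in organization from the paper's proof of the analogous statement for $k\geq 3$ (Proposition \ref{k geq 3 kills curves}), which assumes both $D^n_{1,j}\cdot F_{i,1,1}\neq 0$ and $D^n_{1,j}\cdot F_{i,k,k}\neq 0$ and derives a contradiction by subtracting the two instances of Fakhruddin's criterion; your version instead computes full if-and-only-if thresholds in the single parameter $R$, which is feasible here precisely because $k=2$ pins down $tj\,\%\,n$ for $t\leq 3$, and which has the added benefit of simultaneously re-deriving the conditions used to define $\mathcal{G}$ in Definition \ref{k=2 family}. One small remark: the second sub-case of Definition \ref{k=2 family} (where $D^n_{1,j}\cdot F_{i,1,1}=0$) appears, judging from the worked example and the later ``321'' pattern, to intend $\mathcal{G}(i)=F_{i,1,1}$ rather than $F_{i,3,3}$; your argument is indifferent to this, since in that sub-case $R\leq r$ makes both intersection numbers vanish.
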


The family $\familyG$ may contain $1,1$-curves, $2,2$-curves,
and $3,3$-curves.  In the next proposition, we aim to describe the
patterns in which these appear.

\begin{proposition}
The curves in $\familyG$ may be grouped as follows:
\begin{enumerate}
\item $\familyG(i)=F_{i,2,2}$ and  $\familyG(i+1)=F_{i+1,2,2}$.  We call this pattern a $22$.
\item $\familyG(i)=F_{i,3,3}$,  $\familyG(i+1)=F_{i+1,2,2}$, and $\familyG(i+2)=F_{i+2,1,1}$.  We call this pattern a $321$.
\end{enumerate}
\end{proposition}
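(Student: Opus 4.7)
The plan is to re-express the definition of $\familyG(i)$ entirely in terms of the residue $s_i := ij \bmod n$, turning the proposition into a statement about a one-dimensional dynamical system. Combining Proposition \ref{fakh 5.2} applied to the partitions $(i,\ell,\ell,n-i-2\ell)$ for $\ell=1,2,3$ with the identities $2j = n - r$ and $3j \equiv j - r \pmod n$, I would first show that, up to the trivial cases where some $\nu_k$ vanishes,
\[
D^n_{1,j}\cdot F_{i,1,1}\neq 0\iff s_i>r,\qquad D^n_{1,j}\cdot F_{i,2,2}\neq 0\iff 0<s_i<2r,\qquad D^n_{1,j}\cdot F_{i,3,3}\neq 0\iff s_i>3r.
\]
Since $k=2$ forces $r<j$, one has $n = 2j + r > 3r$, so the intervals $I_1 = (0,r]$, $I_3 = (r,2r)$, $I_2 = [2r,n)$ partition the nonzero residues. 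Unwinding the definition, $\familyG(i)$ is then $F_{i,1,1}$, $F_{i,3,3}$, or $F_{i,2,2}$ according as $s_i$ lies in $I_1$, $I_3$, or $I_2$, respectively (the first clause $s_{i+2}=0$ collapses, via the recursion $s_{i+2}\equiv s_i - r\pmod n$, to the single case $s_i = r$, which produces label ``3'' and fits cleanly into the 321 pattern below).

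With this dictionary in hand, both patterns follow from direct interval arithmetic using $s_{i+1}\equiv s_i + j\pmod n$. For pattern (2), if $s_i\in I_3$, then $s_i + j < 2r + j < n$ (from $r < j$), so $s_{i+1} = s_i + j \in (r+j, 2r+j) \subseteq I_2$ (since $j > r$), and $s_{i+2} = s_i - r \in (0, r) \subseteq I_1$ with no modular wrap; this yields $(\familyG(i), \familyG(i+1), \familyG(i+2)) = (F_{i,3,3}, F_{i+1,2,2}, F_{i+2,1,1})$. For pattern (1), if $s_i \in [2r, j+r)$ (the no-wrap portion of $I_2$), then $s_{i+1} = s_i + j \in [2r+j, n) \subseteq I_2$, producing two consecutive $F_{\cdot,2,2}$'s. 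The forbidden transitions $3 \to 1$, $3 \to 3$, and $1 \to 1$ are ruled out by the same inclusions: $s_i + j > r$ whenever $s_i > 0$, and $s_i + j > 2r$ whenever $s_i > r$.

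The main obstacle is the remaining subcase of label ``2'', namely $s_i \in [j+r, n)$, where $s_{i+1} = s_i - (j+r) \in [0, j)$ wraps under $+j$; the label of $s_{i+1}$ then depends on whether $j \geq 2r$ (so label ``2'' is attainable within $[0, j)$) or $j < 2r$ (only labels ``1'' and ``3'' arise there). These two subcases must be handled separately to show that every wraparound is followed by the start of another 22 or 321 block, and the sporadic equality cases $s_i = r$ and $s_i = 2r$ must be verified to be consistent with the intended grouping. This case analysis, rather than the core interval argument above, is where the bookkeeping is most intricate; once it is completed, the sequence $\familyG(1), \familyG(2), \ldots, \familyG(g)$ decomposes cleanly into consecutive blocks of types (1) and (2).
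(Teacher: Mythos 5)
The paper never actually proves this proposition --- Section \ref{k=2 structure} is explicitly only an outline --- so there is no argument of the authors' to compare yours against; I will assess it on its own terms. Your reduction to the residue $s_i := ij \bmod n$ is the right framework, and your three vanishing criteria are correct: writing $ij = Qn + s_i$ and using $2j \equiv -r$, $4j\equiv 2r$, $6j \equiv 3r \pmod n$, part (4) of Proposition \ref{when int numbers are zero} gives $D^n_{1,j}\cdot F_{i,1,1}\neq 0 \iff s_i > r$, and Proposition \ref{fakh 5.2} applied to $(i,2,2,n-i-4)$ and $(i,3,3,n-i-6)$ gives the other two (the sum of the $\nu$'s is $2n$ exactly when $s_i<2r$, resp.\ $s_i>3r$). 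The resulting dictionary --- label $1$, $3$, $2$ according as $s_i\in[0,r)$, $[r,2r)$, $[2r,n)$, once you fold in the boundary cases $s_i=r$ (label $3$ via the integrality clause) and $s_i=0$ (label $1$; this value does occur when $\gcd(j,r)\geq 3$ and is omitted by your "nonzero residues" partition) --- is correct; it reproduces, e.g., the $n=28$, $j=11$ example. The transitions $3\to 2\to 1$ and "non-wrapping $2$" $\to 2$ are also verified correctly.

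The proof is nonetheless incomplete, and the gap sits exactly where you flag it, but I want to name the missing idea rather than just the missing case. Ruling out the transitions $3\to 1$, $3\to 3$, $1\to 1$ does not by itself force the label string to decompose into \emph{aligned} blocks: a priori one could have an isolated $2$ directly before a $3$, or a run of $2$'s pairing off against the wrong boundary. What closes this is an inductive invariant you have not stated: \emph{every position at which a new block begins satisfies $s_i < n-j = j+r$}, so the modular wrap $s_{i+1}=s_i-(j+r)$ occurs only at the position immediately after the second curve of a $22$ or after the middle curve of a $321$. The invariant holds at $i=1$ (where $s_1=j$), is preserved when a $321$ completes (the next start lies in $[j,j+r)$), and is preserved when a $22$ completes (the next start is $s_i-r$, strictly smaller than the previous start, and either lands in $[r,2r)$, opening a $321$, or stays in $[2r,j+r)$, opening another $22$). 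With this invariant your pattern-(1) computation always applies at a label-$2$ block start, every completed block is followed by a block start, and the induction terminates (with at most a truncated final block at $i=g$, consistent with Table \ref{data table}). Without it, the two subcases $j\geq 2r$ versus $j<2r$ that you propose to "handle separately" do not obviously assemble into a proof, since the label of a post-wrap position alone does not tell you whether the grouping stays aligned.
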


\begin{definition} We define a \emph{sequence of pairs and triples}
  $(s_i)$ when $k=2$.  (Compare this to Definition \ref{k geq 3 s} which
  applies when  $k \geq 3$.)  If the $i^{th}$ group is of the first
  type above, we set $s_i=22$.  If the $i^{th}$ group is of the second
  type above, we set $s_i=321$.  
\end{definition}

Given $n$ and $j$ we can determine how the family $\familyG$ begins.
\begin{proposition}\label{k=2 s_0}
Write $n=jk+r$ with $0 \leq r < j$ and $k=2$.  Then if $j < 2r$, we have $s_0=321$, and if $j \geq 2r$, we have $s_0=22$.  
\end{proposition}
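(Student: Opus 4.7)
The proof is a direct case analysis using the $k=2$ definition of $\mathcal{G}$, Fakhruddin's formula (Proposition \ref{fakh 5.2}), and the vanishing criterion of Proposition \ref{when int numbers are zero}. Throughout $n = 2j + r$ with $0 < r < j$, so every quantity $\ell j \bmod n$ can be evaluated by elementary modular arithmetic; this is where the comparison between $j$ and $2r$ will surface.

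The first step is to show $\mathcal{G}(2) = F_{2,2,2}$ under either hypothesis, since this curve is the common middle entry of both claimed patterns. We check: $\frac{4j}{n} = 2 - \frac{2r}{n}$ is not an integer; the integer $1$ lies in $(\frac{2j}{n},\frac{4j}{n})$, giving $D^n_{1,j}\cdot F_{2,1,1} \neq 0$ by Proposition \ref{when int numbers are zero}(d); and for $F_{2,2,2,n-6}$ we have $\nu_1 = \nu_2 = \nu_3 = 2j$ and $\nu_4 = (-6j) \bmod n = 3r$ (since $r < j$ places $3r$ in $[0,n)$), so $\sum \nu_i = 6j + 3r = 3n$, giving $D^n_{1,j}\cdot F_{2,2,2} = 0$ by Proposition \ref{when int numbers are zero}(a).

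The dichotomy shows up at $\mathcal{G}(1)$. Here $\frac{3j}{n} \in (1,2)$ and $D^n_{1,j}\cdot F_{1,1,1} \neq 0$, so the only question is whether $D^n_{1,j}\cdot F_{1,2,2}$ vanishes. For $F_{1,2,2,n-5}$ we have $\nu_1 = j$, $\nu_2 = \nu_3 = 2j$, and $\nu_4 = (-5j) \bmod n$. Adding $n$ twice to $-5j$ gives $2r - j$, whose sign decides the matter. If $j > 2r$, one more addition of $n$ is needed and yields $\nu_4 = j + 3r$, so $\sum \nu_i = 3n$ and the intersection vanishes by (a); if $j = 2r$, $\nu_4 = 0$ and the intersection vanishes by (b); and if $j < 2r$, then $\nu_4 = 2r - j > 0$ with $\sum \nu_i = 2n$, giving a strictly positive intersection. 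Hence $\mathcal{G}(1) = F_{1,2,2}$ exactly when $j \geq 2r$, which establishes $s_0 = 22$ in that case.

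In the remaining case $j < 2r$ we have $\mathcal{G}(1) = F_{1,3,3}$, and it remains to verify $\mathcal{G}(3) = F_{3,1,1}$ to confirm $s_0 = 321$. Under $j < 2r$ both $\frac{3j}{n}$ and $\frac{5j}{n}$ lie in $(1,2)$ (the upper bound $5j < 2n$ is equivalent to $j < 2r$), so no integer lies strictly between them and $D^n_{1,j}\cdot F_{3,1,1} = 0$ by Proposition \ref{when int numbers are zero}(d); this forces $\mathcal{G}(3) = F_{3,1,1}$. The main (mild) obstacle is the bookkeeping of how many copies of $n$ must be added to $-5j$ to land in $[0,n)$; this hinges precisely on the sign of $j - 2r$, explaining why that is the correct dichotomy.
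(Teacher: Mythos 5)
Your proof is correct, and since the paper explicitly declines to give details in the $k=2$ sections, it supplies exactly the argument the authors omit: it is the direct analogue of their proof of Proposition \ref{k geq 3 beginnings}, applying Proposition \ref{fakh 5.2} and Proposition \ref{when int numbers are zero} to the residues $\nu_i$ (and reading Definition \ref{k=2 family} as the example with $n=28$, $j=11$ forces, i.e.\ with $\mathcal{G}(i)=F_{i,1,1}$ when $D^n_{1,j}\cdot F_{i,1,1}=0$). All the modular arithmetic checks out, in particular the key reduction of $(-5j)\bmod n$ to $2r-j$, $0$, or $j+3r$ according as $j<2r$, $j=2r$, or $j>2r$, which is precisely what produces the stated dichotomy.
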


Given $n$ and $j$, we can determine how the family $\familyG$ ends.  
\begin{proposition}\label{k=2 endings}
Write $n=jk+r$ with $0 \leq r < j$ and $k=2$.   Then there are thirteen possible endings for the family $\familyG$.  These are listed in Table \ref{data table} below.  
\end{proposition}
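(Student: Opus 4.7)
The plan is to proceed by direct case analysis, in close parallel to the proof of Proposition \ref{k geq 3 endings}. Because $k=2$ forces $n$ into the narrow window $2j \le n < 3j$, the arithmetic is controlled entirely by $n$, $j$, and $r = n - 2j$, together with the parities of these quantities, which determine $g$ (via $n=2g+2$ or $n=2g+3$) and so fix the index $i=g$ at which the family terminates. The ``ending'' of $\familyG$ is then determined by the types of the curves $\familyG(g), \familyG(g-1), \ldots$ for as many indices as are needed to see the last complete $22$ or $321$ block; since blocks have length at most three, no more than four or five tail indices are ever relevant.

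First, I would fix notation and, for each index $i$ near $g$, apply Proposition \ref{when int numbers are zero} to decide whether $D^n_{1,j} \cdot F_{i,1,1} = 0$ (this is controlled by the existence of an integer $p$ with $ij/n < p < (i+2)j/n$) and whether $D^n_{1,j} \cdot F_{i,2,2} = 0$ (by the analogous criterion on $(i+4)j/n$). These two tests, together with the criterion $(i+2)j/n \in \Z$ for $i \in T_{j,n}$, determine which of $F_{i,1,1}, F_{i,2,2}, F_{i,3,3}$ is chosen as $\familyG(i)$ according to Definition \ref{k=2 family}. This is a finite explicit computation with the $\nu_m$ from Proposition \ref{fakh 5.2}, carried out exactly as in Cases 1, 2a--2d of the proof of Proposition \ref{k geq 3 endings}.

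Next, I would assemble the chosen tail curves into blocks of type $22$ or $321$ and enumerate the distinct patterns that arise. The natural parameters for the enumeration are the parity of $n$, the parity of $j$, the parity of $r$, the sign of $j - 2r$ (which by Proposition \ref{k=2 s_0} distinguishes $s_0 = 22$ from $s_0 = 321$ and governs the analogous local choice near $i = g$), and a small number of divisibility coincidences of the form $\tfrac{(i+2)j}{n} \in \Z$ for $i$ within a bounded distance of $g$. Running through these parameters produces a table of candidate endings; pruning to those arithmetically realizable with $2j \le n < 3j$ leaves exactly thirteen distinct cases, which become the rows of Table \ref{data table}.

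The main obstacle is bookkeeping rather than mathematical depth: with several parity and residue conditions interacting near the boundary index $g$, it is easy to miss a degenerate case in which some $\nu_m$ accidentally vanishes or in which an index $i \in T_{j,n}$ lands exactly at the tail. I would therefore verify each candidate ending by an explicit $\nu_m$ computation, and cross-check the enumeration against small examples (for instance the $n=28$, $j=11$ example preceding the proposition, and additional examples on each side of the $j=2r$ threshold) to confirm that no case is double-counted and none is omitted. Once the thirteen endings are isolated, the proof is complete by inspection.
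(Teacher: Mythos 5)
Your overall strategy --- compute the $\nu_m$ of Proposition \ref{fakh 5.2} for indices near $i=g$ and read off the block pattern --- is the natural one, and since the authors explicitly omit the proofs in the $k=2$ sections there is no written argument in the paper to compare it against. But there is a genuine gap in your claim that ``no more than four or five tail indices are ever relevant.'' Look at cases 10 and 13 of Table \ref{data table}: the endings there are $22,321,2^{\delta},\xout{2}$ with $\delta = 2\lceil \frac{j}{2r}\rceil - 2$ (resp.\ $\delta = 2\lceil \frac{j-r}{2r}\rceil - 1$). When $r$ is small compared to $j$ the run $2^{\delta}$ of consecutive $2,2$-curves is arbitrarily long (for $r=1$ it occupies almost the entire family), so the ending --- in particular the index $L_3$ of the last $3,3$-curve, which is exactly the data the table records and which the independence argument of Section \ref{k=2 independence} consumes --- is not visible in any window of bounded size around $i=g$. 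What one must actually do in those cases is locate the last index $i\le g$ at which a $321$ occurs, equivalently the last index below $g$ where $\lfloor ij/n\rfloor$ jumps or $(i+2)j/n\in\Z$, and then prove that every index between that point and $g$ yields a $2,2$-curve; that is a statement about an unbounded range of indices, proved from the arithmetic of $ij\bmod n$ with $n=2j+r$, not by inspecting a fixed tail. The same computation is the source of the formula for $\delta$ and of the case boundaries $j>4r$, $j>3r$, $\frac{5}{4}r<j\le\frac{3}{2}r$, etc.; your plan to ``prune to the arithmetically realizable cases'' after a parity enumeration does not explain why the thresholds are these particular rational multiples of $r$. (Also, the parity of $r$ is not an independent parameter: $r=n-2j$ has the parity of $n$.)

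A smaller imprecision: the test for $D^n_{1,j}\cdot F_{i,2,2}=0$ is not literally ``the analogous criterion on $(i+4)j/n$.'' For $F_{i,2,2,n-i-4}$ one has $\nu_3=\nu_4=2j$ rather than $j$, so the condition $\sum\nu_m=2n$ becomes $\lceil (i+4)j/n\rceil-\lfloor ij/n\rfloor=2$, i.e.\ \emph{exactly} one integer strictly between $ij/n$ and $(i+4)j/n$. That interval has length $4j/n\in(\frac{4}{3},2)$ and can therefore contain two integers, in which case $\sum\nu_m=3n$ and the intersection is again zero. Since the entire proof is a $\nu_m$ bookkeeping exercise, this criterion must be stated exactly; with it, and with the localization of the last $321$ described above, your case analysis does go through and reproduces the thirteen rows of the table.
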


\begin{proposition}
The family $\Ghat$ consists of $g-1$ distinct curves.
\end{proposition}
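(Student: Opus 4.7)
The plan is to mimic the strategy used for Proposition~\ref{k geq 3 distinct}, but now with three possible curve types ($1,1$-curves, $2,2$-curves, and $3,3$-curves) rather than two. The first step is to observe that a duplication $\familyG(i)=\familyG(i')$ with $i \neq i'$ cannot occur between curves of different types, since $1,1$-curves, $2,2$-curves, and $3,3$-curves are distinct as $\F$-curve classes. So the only concern is that two curves of the \emph{same} type are actually equal.

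The second step is to pin down exactly where such duplicates can occur. For each type, the identity $F_{i,m,m} = F_{i',m,m}$ with $i<i'$ and both $i,i' \le g$ forces $i' = n-i-2m$, which in turn forces $i \ge g - m + 1$ (with $m \in \{1,2,3\}$). Thus any potential duplication is confined to the last few indices of $\familyG$: a $1,1$-duplicate requires $i \ge g$, a $2,2$-duplicate requires $i \ge g-1$, and a $3,3$-duplicate requires $i \ge g-2$. In particular, only the tail of $\familyG$ needs to be inspected.

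The third step is to carry out the case check using Proposition~\ref{k=2 endings}. That proposition enumerates all $13$ possible endings of the family $\familyG$ (listed in Table~\ref{data table}). For each of the thirteen endings, the last few values $\familyG(g-2),\familyG(g-1),\familyG(g)$ are explicit, and one reads off the curves whose indices fall in the critical range $i \ge g-2$. After removing $\familyG(p)$, the last non-$1,1$-curve, in the formation of $\Ghat$, one then verifies directly that no two of the remaining curves of the same type coincide. In each ending this is a short computation comparing the index $i$ of a tail curve against $n-i-2m$ for the appropriate $m$.

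The main obstacle is administrative rather than conceptual: there is nothing subtle behind the verification, but one must carefully go through each of the thirteen endings. The argument is made straightforward by the fact that, by the definition of $p$, the dropped curve $\familyG(p)$ is always a $2,2$-curve or a $3,3$-curve appearing near the end of $\familyG$, so it is exactly the curve most likely to coincide with another tail curve; removing it eliminates the only troublesome collisions. Once the thirteen endings are dispatched, the family $\Ghat$ contains $g-1$ distinct $\F$-curve classes, as claimed.
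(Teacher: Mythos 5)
Your overall strategy is the right one: it is exactly the argument the paper gives for the $k\geq 3$ analogue (Proposition \ref{k geq 3 distinct}), transplanted to $k=2$ with three curve types instead of two; the paper itself omits the proof in the $k=2$ case, so this is the intended route. However, your second step contains an arithmetic error that leaves the case check incomplete. If $F_{i,m,m,n-i-2m} = F_{i',m,m,n-i'-2m}$ with $i < i' \leq g$, then $i' = n-i-2m$, and the constraint $i' \leq g$ gives $i \geq n-g-2m$, which is $g+2-2m$ when $n=2g+2$ and $g+3-2m$ when $n=2g+3$ (the exact analogue of the bound $i \geq g-2k+2$ in the paper's $k\geq 3$ proof). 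For $m=3$ and $n$ even this is $i \geq g-4$, not $i \geq g-2$: the potentially colliding pairs of $3,3$-curves are $(i,i')=(g-4,g)$ and $(g-3,g-1)$ for $n$ even, and $(g-3,g)$, $(g-2,g-1)$ for $n$ odd, while the $2,2$-collisions occur at $(g-2,g)$ for $n$ even and $(g-1,g)$ for $n$ odd. Inspecting only $\familyG(g-2)$, $\familyG(g-1)$, $\familyG(g)$ therefore cannot detect a coincidence such as $F_{g-4,3,3}=F_{g,3,3}$, since the smaller member of that pair lies outside your window. To complete the argument you must read off roughly the last five entries of $\familyG$ from each of the thirteen endings of Proposition \ref{k=2 endings} (the endings listed in Table \ref{data table} do describe that many entries, so the needed data is available) and check every pair $(i,\,n-i-2m)$ for $m=2,3$ with both members in range, after the drop.

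A smaller point: your first step, that curves of different types never give the same class, is not purely formal --- for example $F_{3,2,2,3}$ and $F_{2,3,3,2}$ coincide when $n=10$, since both have shape $\{2,2,3,3\}$. Such coincidences require small values of $n$ or small indices and so do not occur for the $n$, $j$ in play here ($k=2$, $j\geq 6$ forces $n\geq 13$, and the relevant indices sit near $g$), but a sentence ruling them out belongs in the proof.
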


\section{\texorpdfstring{Extremality of the divisors $D^{n}_{1,j}$ when $k =2$}{Extremality of the divisors Dn1j when k=2}}\label{k=2 independence}

\begin{definition}
We let $L_2$ be the index of the last $2,2$-curve in
$\Ghat$ and let $L_3$ be the index of the last $3,3$-curve in
$\Ghat$.  Let $L=\min \{ L_2, L_3\}$.
\end{definition}

It will be convenient to define one more sequence, $(\beta_i)$:
\begin{definition}  \label{betai def}
\begin{displaymath} \beta_{i} = \# \{ p : 0 \leq p<i, s_p = 321    \}
 \end{displaymath} 
\end{definition}

The proof of extremality will now break into several cases, depending
on the pattern of the curves in the family $\familyG$.  The strategy is similar in every case:
\begin{enumerate}
\item[\textbf{Step 1.}] Find a recurrence
relation satisfied by the rows $L_2$ and $L_3$ as one row reduces these rows 
through a $321$ or a $22$.
\item[\textbf{Step 2.}] Define a sequence $x_i$ whose final
member $x_{\tau}$ computes $\det N$.
\item[\textbf{Step 3.}] Show that the sequence $(x_i)$ 
is absolutely nondecreasing, and $x_1 \neq 0$.
Thus $\det N \neq 0$.
\end{enumerate}



In the next definition and proposition, we carry out Step 1.  We define
sequences $(a_i)$, $(b_i)$, $(d_i)$, $(e_i)$, $(A_i)$, $(B_i)$,
$(D_i)$, and $(E_i)$, and find recurrence relations for these
sequences as one row reduces rows $L_2$ and $L_3$ through a $321$ or a $22$.

\begin{definition}[Definition of the sequences $a-d$ and $A-E$]
\begin{enumerate}
\item If we row reduce row $L_2$ through two columns corresponding to a 22, or two columns corresponding to a 321, there are at most two leading nonzero entries.  We denote these $a_i$ and $b_i$ respectively.
\item Row $L_3$ begins with three nonzero leading entries, which we denote $A_0$, $B_0$, $C_0$.  After row reducing through the first two columns, there are at most two nonzero leading entries, which we denote $A_1$ and $B_1$.  If we row reduce rows $L_2$ and $L_3$ through two columns corresponding to a 22, or two columns corresponding to a 321, we again have at most two leading nonzero entries, which we denote $A_i$ and $B_i$.  
\item  Suppose $N$ has $m$ columns.  We define sequences $(d_i)$ and
  $(e_i)$ as the values in row $L_2$ and the columns specified in Table \ref{data table}  as
  the matrix $N$ is row reduced.  Similarly, we define sequences
  $(D_i)$ and $(E_i)$ as the entries in row $L_3$ and the column
  specified in Table \ref{data table} as $N$ is row reduced. 
\end{enumerate}
\end{definition}

\begin{proposition} \label{k=2 recurrences}  
\begin{enumerate}
\item \emph{Recursion when row reducing through a 22.} Suppose $\mathcal{G}(i)=F_{i,2,2}$ and
  $\mathcal{G}(i+1)=F_{i+1,2,2}$,  and $i+4 \leq L_2$.  Consider
  the $4 \times 6$ submatrix of $N$ whose rows correspond to these two
  curves, $F_{L_2,2,2}$, and $F_{L_3,3,3}$,  and whose columns
  correspond to the curves $F_{p,1,1}$ for $p \in \{i,i+1,i+2,i+3 \}$
  plus the columns containing $d_i$ and $e_i$.  Then row reducing through this pair looks like the following:
\begin{displaymath}
\left( \begin{array}{cccccc}
1 & 2 &1 &0  & -d_0 & -e_0\\
0 & 1 & 2 & 1 &  -d_0 & -e_0 \\
a_{i} & b_{i} & 0 & 0 & d_{i} & e_i \\
A_{i} & B_{i} & 0 & 0 & D_{i} & E_i 
\end{array} \right) \rightarrow 
\left( \begin{array}{cccccc}
1 & 2 &1 &0 &  -d_0 & -e_0\\
0 & 1 & 2 & 1  & -d_0 & -e_0 \\
0 & 0 & a_{i+1} & b_{i+1} & d_{i+1} & e_{i+1} \\
0 & 0 & A_{i+1} & B_{i+1} & D_{i+1} & E_{i+1} \\
\end{array} \right)
\end{displaymath}%
We have
\begin{eqnarray*}
a_{i+1} & = & 3a_i - 2b_i\\
b_{i+1} & = & 2a_i - b_i\\
d_{i+1} & =&   d_i- d_0(a_i - b_i) \\
e_{i+1} & =& e_{i} - e_0(a_i -  b_i)
\end{eqnarray*}
for $i\geq 0$, and similar recurrences hold for $A_i, B_i, D_i, E_i$ for $i\geq 1$.  
\item \emph{Recursion when row reducing through a 321.}  Suppose $\mathcal{G}(i)=F_{i,3,3}$,
 $\mathcal{G}(i+1)=F_{i+1,2,2}$, and $\mathcal{G}(i+2)=F_{i+2,1,1}$, and $i+4 \leq L_2$.  Consider
  the $4 \times 6$ submatrix of $N$ whose rows correspond to
  $F_{i,3,3}$, $F_{i+1,2,2}$, $F_{L_2,2,2}$, and $F_{L_3,3,3}$,  and
  whose columns correspond to the curves $F_{p,1,1}$ for $p \in
  \{i,i+1,i+3,i+4 \}$ together with the columns containing $d_i$ and $e_i$.  Then row reducing through this pair looks like the following:
\begin{displaymath}
\left( \begin{array}{cccccc}
1 & 2 &2 &1 & -D_0 & -E_0\\
0 & 1 & 1 & 0 &  -d_0 & -e_0 \\
a_{i} & b_{i} &0&  0 & d_{i} & e_i \\
A_{i} & B_{i} & 0 &  0 & D_{i} & E_i 
\end{array} \right) \rightarrow 
\left( \begin{array}{cccccc}
1 & 2 &2 &1 &  -D_0 & -E_0\\
0 & 1 & 1 & 0  & -d_0 & -e_0 \\
0 & 0 & a_{i+1} & b_{i+1} &  d_{i+1} & e_{i+1} \\
0 & 0 & A_{i+1} & B_{i+1} &  D_{i+1} & E_{i+1} \\
\end{array} \right)
\end{displaymath}
We have
\begin{eqnarray*}
a_{i+1} & = & -b_i\\
b_{i+1} & = & -a_i\\
d_{i+1} & =&   d_i  + (D_0-2d_0) a_i + d_0 b_i \\
e_{i+1} & =& e_{i} +(E_0-2e_0)a_i + e_0 b_i
\end{eqnarray*}
for $i \geq 0$, and similar recurrences hold for $A_i, B_i, D_i, E_i$ for $i \geq 1$.  
\item In particular, the recurrences for $a_i$, $b_i$, $A_i$, and $B_i
  $ are case independent. 
\item $a_{i+1}-b_{i+1} = a_{i}-b_{i} = -1$.
\item Suppose $i \geq 1$.  Then $A_i = B_i = (-1)^{\beta_i+1} 2$.
\end{enumerate}
\end{proposition}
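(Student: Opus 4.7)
The approach parallels that of Proposition \ref{k geq 3 recurrences} but requires additional case analysis because $\familyG$ here contains both $2,2$- and $3,3$-curves. The plan is: (i) verify the displayed form of the top two rows of each $4\times 6$ submatrix using the basis expansions of Section \ref{newtools}; (ii) carry out the claimed row reduction explicitly to extract the recurrences in (1) and (2); and (iii) deduce parts (3), (4), (5) by induction from the recurrences.

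For (i), I would apply Proposition \ref{case 1} (or Proposition \ref{gamma j} in edge cases). In the 22 case, expanding $F_{i,2,2}$ in the basis gives leading coefficients $(1,2,1,0)$ in columns $i,\ldots,i+3$, and the shifted row $F_{i+1,2,2}$ gives $(0,1,2,1)$. In the 321 case the displayed columns are $i, i+1, i+3, i+4$ since column $i+2$ has already been eliminated by the pivot $\familyG(i+2) = F_{i+2,1,1}$; the relevant coefficients are $(1,2,2,1)$ for $F_{i,3,3}$ and $(0,1,1,0)$ for $F_{i+1,2,2}$. The trailing entries $-d_0, -e_0, -D_0, -E_0$ in the upper rows accumulate from earlier row-reduction steps that subtracted multiples of the $L_2$ and $L_3$ rows; a separate short induction, using the fact that successive pivots are $1$ in their leading columns, shows the cumulative contribution at each such column equals exactly the negative of the original $L_2, L_3$ entry there.

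For (ii), the recurrences follow from direct row operations. To zero out $(a_i, b_i)$ in row $R_3$ I would subtract $a_i R_1 + (b_i - 2 a_i) R_2$. In the 22 case this produces $a_{i+1} = 3 a_i - 2 b_i$, $b_{i+1} = 2 a_i - b_i$, $d_{i+1} = d_i - d_0(a_i - b_i)$, and $e_{i+1} = e_i - e_0(a_i - b_i)$. In the 321 case the same linear combination against the shifted leading block gives $a_{i+1} = -b_i$, $b_{i+1} = -a_i$, and the stated recurrences for $d$ and $e$; identical manipulations on $R_4$ yield the $A, B, D, E$ recurrences.

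Finally, part (3) is immediate because the upper-left $2 \times 2$ block $\bigl(\begin{smallmatrix}1 & 2 \\ 0 & 1\end{smallmatrix}\bigr)$ is identical in both cases, so the same linear combination eliminates both $(a_i, b_i)$ and $(A_i, B_i)$. Part (4) follows by induction from $a_0 - b_0 = -1$ (computed directly from the basis expansion of $F_{L_2,2,2}$) and the identity $(3 a_i - 2 b_i) - (2 a_i - b_i) = a_i - b_i = (-b_i) - (-a_i)$. For part (5), the 22 recurrence preserves $A_i = B_i$ and the common value, while the 321 recurrence negates both, so the sign flips exactly when $s_p = 321$; evaluating $A_1 = B_1 = \pm 2$ via the explicit expansion of $F_{L_3,3,3}$ in the basis then gives the closed form $(-1)^{\beta_i + 1} \cdot 2$. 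The main obstacle is step (i) --- justifying that the cumulative trailing entries are exactly $-d_0, -e_0$ --- which requires careful bookkeeping across many 22/321 subpatterns; however, Proposition \ref{cij summary} guarantees that each earlier row has narrow support, so this is routine though lengthy work, after which the remaining algebra is mechanical.
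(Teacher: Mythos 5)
Your proposal is correct and follows exactly the route the paper intends (the paper only outlines the $k=2$ case and states this proposition without proof): the leading $(1,2,1,0)$ and $(1,2,2,1)/(0,1,1,0)$ blocks come from Propositions \ref{case 1}/\ref{gamma j}, the elimination $R \mapsto R - a_iR_1 - (b_i-2a_i)R_2$ reproduces each stated recurrence, and parts (3)--(5) follow by induction from $a_0-b_0=-1$ and the directly computed $A_1=B_1=(-1)^{\beta_1+1}2$. One small clarification: the trailing entries $-d_0,-e_0$ (resp. $-D_0,-E_0$) in the upper rows come from the single subtraction of row $L_2$ (resp. $L_3$) used to form $N$ from $M$, exactly as in the $k\geq 3$ construction, rather than from an accumulation over many steps, and ``case independent'' in part (3) refers to independence from the thirteen cases of the table (i.e.\ from $d_0,e_0,D_0,E_0$), which your computation does establish.
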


\subsection{Proof of extremality when \texorpdfstring{$k=2$}{k=2}: an example}
We will present the proof of extremality in Case (1) as an example before we give the
general proof.

Suppose that $n$ is even, $j$ is odd, and $j \leq \frac{5}{4}r$. Then by Proposition \ref{k=2 endings} and Table \ref{data table}, the sequence
  $(s_i)$ ends $321,321,3\xout{2}1$.  Thus $L_3 = g-2$ and $L_2 =  g-3$.  The sequences $(d_i)$ and $(D_i)$ are defined by the second
  to last column (i.e. the column corresponding to the curve
  $F_{g-3,2,2}$) and the sequences $(e_i)$ and $(E_i)$ are defined by the last column (i.e. the column corresponding to the curve
  $F_{g-2,3,3}$).

For example, here is the matrix $N$ for $n=48, j=17$.  The sequence $(s_i)$ is $321$, $321$, $321$, $321$, $22$, $321$, $321$, $3\xout{2}1$.  

\begin{displaymath}
\left(
\begin{array}{ccccccccccccccc}
1 & 2 & 2 & 1 & 0 & 0 & 0 & 0 & 0 & 0 & 0 & 0 & 0 & 0 & -2\\
0 & 1 & 1 & 0 & 0 & 0 & 0 & 0 & 0 & 0 & 0 & 0 & 0 & -1 & -1\\
0 & 0 & 1 & 2 & 2 & 1 & 0 & 0 & 0 & 0 & 0 & 0 & 0 & 0 & -2\\
0 & 0 & 0 & 1 & 1 & 0 & 0 & 0 & 0 & 0 & 0 & 0 & 0 & -1 & -1\\
0 & 0 & 0 & 0 & 1 & 2 & 2 & 1 & 0 & 0 & 0 & 0 & 0 & 0 & -2\\
0 & 0 & 0 & 0 & 0 & 1 & 1 & 0 & 0 & 0 & 0 & 0 & 0 & -1 & -1\\
0 & 0 & 0 & 0 & 0 & 0 & 1 & 2 & 2 & 1 & 0 & 0 & 0 & 0 & -2\\
0 & 0 & 0 & 0 & 0 & 0 & 0 & 1 & 1 & 0 & 0 & 0 & 0 & -1 & -1\\
0 & 0 & 0 & 0 & 0 & 0 & 0 & 0 & 1 & 2 & 1 & 0 & 0 & -1 & -1\\
0 & 0 & 0 & 0 & 0 & 0 & 0 & 0 & 0 & 1 & 2 & 1 & 0 & -1 & -1\\
0 & 0 & 0 & 0 & 0 & 0 & 0 & 0 & 0 & 0 & 1 & 2 & 2 & 1 & -2\\
0 & 0 & 0 & 0 & 0 & 0 & 0 & 0 & 0 & 0 & 0 & 1 & 1 & -1 & -1\\
0 & 0 & 0 & 0 & 0 & 0 & 0 & 0 & 0 & 0 & 0 & 0 & 1 & 2 & 0\\
-2 & -1 & 0 & 0 & 0 & 0 & 0 & 0 & 0 & 0 & 0 & 0 & 0 & 1 & 1\\
-2 & -3 & -1 & 0 & 0 & 0 & 0 & 0 & 0 & 0 & 0 & 0 & 0 & 0 & 2
\end{array}
\right)
\end{displaymath}

We have $d_0 = 1$, $e_0 = 1$, $D_0 = 0$, $E_0 = 2$.

As we row reduce rows $L_2$ and $L_3$ of $N$, the recurrences of
Proposition \ref{k=2 recurrences} hold until we reach the lower right  $5 \times 5$ block.  (We can see this in the example above for $n=48,j=17$ as follows.  The first ten rows all have zeroes to the left of the diagonal, followed by at most four nonzero entries starting on the diagonal, followed by zeroes until the last two columns.  Thus, the recurrences apply as long as the nonzero entries do not reach the last two columns, i.e. when we reach the lower right $5 \times 5$ block.)

After all these row reductions, the lower right $5 \times 5$ block is 
\begin{displaymath}
\left( \begin{array}{ccccc}
1 & 2 & 2 & 1 & -2\\
0 & 1 & 1 & -1 & -1\\
0 & 0& 1& 2 & 0\\
a_{\tau} & b_{\tau} & 0& d_{\tau} & e_{\tau} \\
A_{\tau} & B_{\tau} & 0 & D_{\tau} & E_{\tau} \\
\end{array} \right).
\end{displaymath}
The determinant of this block is 
\begin{displaymath}
x_{\tau} = (3b_{\tau} +3e_{\tau} )A_{\tau}  + (-3a_{\tau} +d_{\tau} -3e_{\tau} )B_{\tau}  + (-b_{\tau} -e_{\tau} )D_{\tau}  + (-3a_{\tau} +3b_{\tau} +d_{\tau} )E_{\tau} 
\end{displaymath}
We use this relation to define a sequence $(x_i)$ for all $i\geq 1$ in terms
of the sequences $(a_i)$,  $(b_i)$, $(d_i)$, $(e_i)$, $(A_i)$,
$(B_i)$, $(D_i)$, $(E_i)$, which were previously defined.

\begin{proposition} \label{22321 ABCD prop} 
Suppose $n=jk+r$ with $0 \leq r < j$ and $k=2$.  Suppose $n$ is even, $j$ is odd, $j \leq \frac{5}{4}r$, so that we are in Case (1).  
\begin{enumerate}
\item Suppose $i \geq 1$. Then $D_{i} = (-1)^{\beta_i+1}$.
\item Suppose $i \geq 1$.  Then $E_i=(-1)^{\beta_i}$.
\item Using the previous formulas gives a simplified formula for $x_i$:
\[  x_i = (-1)^{\beta_i+1} (-b_i+ d_i-e_i+3)
\]
\end{enumerate}
\end{proposition}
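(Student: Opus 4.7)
The plan is to establish parts (1) and (2) by simultaneous induction on $i$, and then deduce (3) by direct algebraic simplification. The crucial input is Proposition \ref{k=2 recurrences}, especially its fifth part, which asserts that $A_i = B_i = 2(-1)^{\beta_i+1}$ for $i \geq 1$. This collapses $A_i - B_i$ to zero in the $22$-recurrences for $D$ and $E$, and reduces the $321$-recurrences to single-term updates depending only on $A_i$ and the pairs $(D_0 - d_0, E_0 - e_0)$.

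Before starting the induction, I would pin down the base constants specific to Case (1). Applying Propositions \ref{case 1} and \ref{case 2} to the generic rows coming from $F_{i,2,2}$ and $F_{i,3,3}$ curves and reading off the entries in the columns associated to $F_{g-3,2,2}$ and $F_{g-2,3,3}$ yields $d_0 = e_0 = 1$, $D_0 = 0$, $E_0 = 2$. Similarly, one computes via Proposition \ref{gamma j} that the initial leading entries of row $L_3$ in this case are $A_0 = -2$, $B_0 = -3$.

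For the base case $i = 1$: since $j \leq \tfrac{5}{4}r$ forces $j < 2r$, Proposition \ref{k=2 s_0} gives $s_0 = 321$, so $\beta_1 = 1$, and the target values are $D_1 = 1 = (-1)^{\beta_1+1}$ and $E_1 = -1 = (-1)^{\beta_1}$. Applying the $321$-recurrence of Proposition \ref{k=2 recurrences}(2) at $i = 0$ with the above constants produces exactly these numbers. For the inductive step, assume both formulas hold at some $i \geq 1$. If $s_i = 22$, then $A_i = B_i$ erases the correction terms and $\beta_{i+1} = \beta_i$, so the formulas persist. If $s_i = 321$, then substituting $A_i = 2(-1)^{\beta_i+1}$ into $D_{i+1} = D_i + A_i(D_0 - d_0)$ and $E_{i+1} = E_i + A_i(E_0 - e_0)$ yields $D_{i+1} = -D_i$ and $E_{i+1} = -E_i$, matching the parity flip $\beta_{i+1} = \beta_i + 1$.

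For (3), I would use $A_i = B_i$ to rewrite the given expression for $x_i$ as $(A_i + E_i)(-3a_i + 3b_i + d_i) + D_i(-b_i - e_i)$. Substituting $D_i = (-1)^{\beta_i+1}$, $E_i = (-1)^{\beta_i}$ and noting that $A_i + E_i = (-1)^{\beta_i+1}$ factors out the common sign $(-1)^{\beta_i+1}$; the remaining bracket simplifies to $-b_i + d_i - e_i + 3$ using $a_i - b_i = -1$ from Proposition \ref{k=2 recurrences}(4), which turns $-3a_i + 2b_i$ into $-b_i + 3$. The main obstacle is not any single computation but the careful verification of the base constants $d_0, e_0, D_0, E_0, A_0, B_0$ for generic $(n,j)$ in Case (1) rather than just in the worked example; once these are secured, the recursion runs mechanically and the parity tracking via $(-1)^{\beta_i}$ takes care of itself.
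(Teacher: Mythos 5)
Your proposal is correct and follows essentially the same route as the paper's (much terser) proof: establish $D_1=1$, $E_1=-1$ by the initial row reduction, propagate via the recurrences of Proposition \ref{k=2 recurrences} together with $A_i=B_i=(-1)^{\beta_i+1}2$, and substitute into the expression for $x_i$ using $a_i-b_i=-1$. The only point needing a word of care is that the stated recurrences for the capital-letter sequences are asserted only for $i\geq 1$ (row $L_3$ initially has a third leading entry $C_0$), but since $C_0$ does not enter the elimination multipliers for columns $1$ and $2$, your application of the $321$-recurrence to $D_0,E_0$ at $i=0$ does yield the correct $D_1,E_1$, matching the paper's ``compute directly by row reducing.''
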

\begin{proof}
Since $j < 2r$, we have that $s_0=321$.  Then we may compute directly by row reducing that $D_1 = 1$ and $E_1=-1$. Then, for $i \geq 1$, we may apply the recurrence
relations for $D_i$ and the formulas for $A_i$ and $B_i$ obtained in Proposition \ref{k=2 recurrences} above.

The third claim follows once we have the previous two formulas for
$D_i$ and $E_i$.
\end{proof}

\begin{proposition}
\begin{enumerate}
\item $x_1 =5$.
\item The sign of $x_i$ is  $(-1)^{\beta_i+1}$, i.e. $-b_i+ d_i-e_i+3
  \geq 0$.
\item For $i \geq 1$,
\begin{displaymath} 
| x_{i+1} | = \left\{
\begin{array}{ll}
|x_{i}| + 2 & \mbox{ if $s_i=22$}\\
|x_{i}| + 1 & \mbox{ if $s_i=321$}
\end{array} \right.
\end{displaymath}
\end{enumerate}
\end{proposition}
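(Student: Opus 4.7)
The plan is to prove all three statements simultaneously by induction, using the simplified formula $x_i = (-1)^{\beta_i+1}(-b_i+d_i-e_i+3)$ from Proposition \ref{22321 ABCD prop}(3).  Set $y_i := -b_i + d_i - e_i + 3$, so the claims become: (i) $y_1 = 5$; (ii) $y_i \geq 0$ for all $i \geq 1$; (iii) $y_{i+1} = y_i + 2$ when $s_i = 22$ and $y_{i+1} = y_i + 1$ when $s_i = 321$.  Note that (iii) together with (i) immediately gives (ii) by induction (since each increment is strictly positive), so essentially all the work lies in verifying (i) and the two identities in (iii).

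\textbf{Base case.}  First I would pin down the initial data.  Because $j \leq \tfrac{5}{4}r < 2r$, Proposition \ref{k=2 s_0} gives $s_0 = 321$.  Then a short direct computation (using Proposition \ref{gamma j} to expand the $\F$-curves $F_{L_2,2,2}$ and $F_{L_3,3,3}$ in the $\{F_{q,1,1}\}$-basis, restricted to the leading columns and to the two distinguished trailing columns specified in Table \ref{data table} for Case (1)) yields $a_0 = -2$, $b_0 = -1$, $d_0 = e_0 = 1$, and $A_0, B_0, C_0, D_0, E_0$ with $D_0 = 0$, $E_0 = 2$.  Substituting into the $s_0 = 321$ recurrences of Proposition \ref{k=2 recurrences}(2) gives $a_1 = 1$, $b_1 = 2$, $d_1 = 1 + (0 - 2)(-2) + (1)(-1) = 4$, $e_1 = 1 + (2-2)(-2) + (1)(-1) = 0$.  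Since $\beta_1 = 1$, we compute $y_1 = -2 + 4 - 0 + 3 = 5$, proving (i).

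\textbf{Inductive step.}  For (iii), I would plug the recurrences of Proposition \ref{k=2 recurrences} directly into $y_{i+1}$, using the invariant $a_i - b_i = -1$ from \ref{k=2 recurrences}(4) and the values $d_0 = e_0 = 1$, $D_0 = 0$, $E_0 = 2$.  In the $22$ case,
\[
y_{i+1} = -(2a_i - b_i) + (d_i + d_0) - (e_i + e_0) + 3 = y_i + 2(b_i - a_i) + (d_0 - e_0) = y_i + 2.
\]
In the $321$ case, using $d_{i+1} = d_i - 2a_i + b_i$ and $e_{i+1} = e_i + b_i$,
\[
y_{i+1} = a_i + (d_i - 2a_i + b_i) - (e_i + b_i) + 3 = -a_i + d_i - e_i + 3 = y_i + (b_i - a_i) = y_i + 1.
\]
These are routine algebraic cancellations.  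The induction then gives (ii) and (iii), while (i) supplies the base.

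\textbf{Main obstacle.}  The only delicate step is the explicit determination of the initial quantities $a_0, b_0, d_0, e_0, D_0, E_0$ in Case (1); everything after that is a mechanical application of the Proposition \ref{k=2 recurrences} recurrences combined with the identity $a_i - b_i = -1$.  It is worth noting that claim (2) is essentially automatic once we observe that the increments in claim (3) are positive and the base value $y_1 = 5$ is positive, so there is no separate sign argument beyond the induction.
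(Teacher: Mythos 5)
Your proposal is correct and follows essentially the same route as the paper: a direct base-case computation of $(a_1,b_1,d_1,e_1)=(1,2,4,0)$ giving $x_1=5$, followed by induction through the recurrences of Proposition \ref{k=2 recurrences} using the invariant $a_i-b_i=-1$. Your only departure is cosmetic but welcome: by tracking the unsigned quantity $y_i=(-1)^{\beta_i+1}x_i$ you get the clean additive recurrences $y_{i+1}=y_i+2$ or $y_i+1$ directly, which folds the paper's separate sign-tracking step of claim (2) into the induction.
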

\begin{proof}

We prove the first statement by a direct calculation.  Since $j < 2r$, we know that $s_0 =321$.  Thus we can compute $(a_1, b_1, d_1, e_1) = (1,2,4,0)$ and $(A_1, B_1, D_1, E_1) = (2,2,1,-1)$, and hence $x_1= (-1)^{2}(-2+4-0+3)=5$.

We prove the remaining statements by induction on the number of
$321$ and $22$'s encountered, with $i=1$ as the base case.  The calculation above shows that the claimed sign for $x_1$ is correct.

For the induction step,  if $s_i=22$, so that we row reduce through a 22, we can use the recurrences of Proposition \ref{k=2 recurrences} as follows:
\begin{eqnarray*}
x_{i+1} & = & (-1)^{\beta_{i+1}+1}(-b_{i+1}+d_{i+1}-e_{i+1}+3) \\%
& = & (-1)^{\beta_{i}+1}(-(b_i-2) + (d_i+d_0) - (e_i+e_0)+3)\\
& = & (-1)^{\beta_{i}+1}(-b_i+d_i-e_i+3+2)\\
& = & x_i+ (-1)^{\beta_{i}+1}2
\end{eqnarray*}
By the induction hypothesis, $x_i$ and $(-1)^{\beta_{i}+1}2$ have the same sign, so we have $|x_{i+1}| = |x_i|+2$.

Similarly, if $s_i=321$, so that we row reduce through a 321, then again we apply  the recurrences of Proposition \ref{k=2 recurrences}:
\begin{eqnarray*}
x_{i+1} & = & (-1)^{\beta_{i+1}+1}(-b_{i+1}+d_{i+1}-e_{i+1}+3) \\%
& = & (-1)^{\beta_{i}}(-(-a_ib_i-2) + (d_i-2a_i+b_i) - (e_i+b_i)+3)\\
& = & (-1)^{\beta_{i}}(-a_i+d_i-e_i+3)\\
& = & (-1)^{\beta_{i}}(-b_i+d_i-e_i+3+1)\\
& = & -(x_i+ (-1)^{\beta_{i}+1})
\end{eqnarray*}
By the induction hypothesis, $x_i$ and $(-1)^{\beta_{i}+1}$ have the same sign.  We have $|x_{i+1}| = |x_i|+1$.
\end{proof}

Indeed, in the example $n=48, j=17$ above, the sequence $x_i $ is $5, -6, 7, -8, -10$ for $i=1,\ldots,5$.

\subsection{Proof of extremality when \texorpdfstring{$k=2$}{k=2}: the general argument}

We refer to Table \ref{data table} for all the data that is necessary for each case of the general argument.

Given $n$ and $j$, we first compute how the family $\mathcal{G}$ ends.  This leads to thirteen different cases.  Once we know the ending, we can compute $L_2$ and $L_3$.  

Next, for each case, we can determine which columns define the
sequences $d,D$ and $e,E$.  Note by Proposition \ref{cij summary} that when we write $F_{L_2,2,2}$ and $F_{L_3,3,3}$ in the basis $\{ F_{q,1,1} \}$, these rows begin with two or three nonzero entries followed by several zeroes, followed by a nonzero ``tail'' of length less than 4.  These two tails are then propagated to all the previous rows as we form the matrix $N$ from the matrix $M$.  Let $L:=\min \{ L_2, L_3 \}$. It follows that the upper right $(L-1) \times (m-L+1)$ block of $N$ has at most two
  independent columns.  We select the leftmost two independent columns to define $d,D,e,E$.  The results are listed in Table \ref{data table}.

Next, for each case, we can determine when the recurrences of Proposition \ref{k=2 recurrences} will break down by finding the last row before row $L$ whose nonzero entries after the diagonal do not reach column $d,D$ and column $e,E$.  

Thus we can use the recurrences of Proposition \ref{k=2 recurrences} to row reduce $N$ until we arrive at one of the thirteen matrices given in Figure \ref{thirteen matrices}.  Since $N$ is upper triangular with 1's on the diagonal for the first $L-1$ rows, the determinant of $N$ is computed by the determinant of this remaining lower right block.   We define $x_{\tau}$ as the determinant of this block and compute $x_{\tau}$ in terms of $a_{\tau}$, $b_{\tau}$, $d_{\tau}$, $e_{\tau}$, $A_{\tau}$, $B_{\tau}$, $D_{\tau}$, and $E_{\tau}$.  We then use this relation to define $x_i$ for $i \geq 1$ in terms of $a_{i}$, $b_{i}$, $d_{i}$, $e_{i}$, $A_{i}$, $B_{i}$, $D_{i}$, and $E_{i}$.  We write all of these expressions in the format $x_i = (-1)^{\beta_{i}+1}f(a_i,b_i,d_i,e_i)$ and list $f$ in Table \ref{data table}.  

\begin{proposition} \label{k=2 x_i nondecreasing}
The sequence $(x_i)$ is absolutely nondecreasing, and $x_1 \neq 0$.  
\end{proposition}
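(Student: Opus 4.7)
The plan is to extend the argument of Case~(1) above to each of the thirteen cases listed in Table~\ref{data table}. In every case the proof has the same structure: compute $x_1$ directly, derive a recurrence relation for $x_{i+1}$ in terms of $x_i$ using Proposition~\ref{k=2 recurrences}, and conclude by induction that $|x_{i+1}| \ge |x_i|$ while simultaneously propagating the sign of $x_i$.

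For the base case, the value of $s_0$ is determined by Proposition~\ref{k=2 s_0}: if $j < 2r$ then $s_0 = 321$, and if $j \ge 2r$ then $s_0 = 22$. Together with direct computation of the initial rows of $N$ (using Proposition~\ref{cij summary} and the case-specific values of $d_0, e_0, D_0, E_0$ recorded in Table~\ref{data table}), this gives explicit values $(a_1,b_1,d_1,e_1)$ and $(A_1,B_1,D_1,E_1)$. Substituting these into the formula $x_i = (-1)^{\beta_i+1} f(a_i,b_i,d_i,e_i)$ from Table~\ref{data table} yields $x_1$ as a small nonzero integer in each of the thirteen cases.

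For the induction step we use Proposition~\ref{k=2 recurrences} together with the identity $a_i - b_i = -1$. When $s_i = 22$ the parity $\beta_{i+1}$ equals $\beta_i$, so the sign factor $(-1)^{\beta_i+1}$ is preserved; substituting the linear formulas for $a_{i+1},b_{i+1},d_{i+1},e_{i+1}$ (and likewise for the capitalized sequences) into $f$ and simplifying should produce $x_{i+1} = x_i + (-1)^{\beta_i+1} c_{22}$ for a case-dependent positive integer $c_{22}$. When $s_i = 321$ the parity $\beta_{i+1} = \beta_i + 1$, so the sign flips; a parallel computation using the $321$-recurrences yields $x_{i+1} = -x_i - (-1)^{\beta_i+1} c_{321}$ for a case-dependent positive integer $c_{321}$. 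Inductively assuming $\operatorname{sign}(x_i) = (-1)^{\beta_i+1}$, both cases then give $|x_{i+1}| = |x_i| + c$ with $c \ge 1$, which both confirms the sign hypothesis for $x_{i+1}$ and shows that $(|x_i|)$ is nondecreasing. This is the same pattern already exhibited in the worked example $n=48, j=17$, where $(x_i) = (5,-6,7,-8,-10)$ matches $|x_{i+1}|=|x_i|+1$ or $|x_i|+2$ according as $s_i=321$ or $22$.

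The principal obstacle is the thirteen-fold case split: each ending produces its own formula $f$, its own initial constants $d_0, e_0, D_0, E_0$, and its own tail block from Figure~\ref{thirteen matrices}. In each case one must carry out the substitution above and verify that the resulting constant $c$ is a positive integer with the correct sign. This is tedious but mechanical given the data assembled in Table~\ref{data table}; no further conceptual input is required beyond the recurrences of Proposition~\ref{k=2 recurrences} and the identity $a_i - b_i = -1$. Once Proposition~\ref{k=2 x_i nondecreasing} is established in all cases, we have $\det N = x_\tau \neq 0$, whence $\Ghat$ is independent and $D^n_{1,j}$ spans an extremal ray of $\SymNef(\M_{0,n})$ in the remaining case $k=2$.
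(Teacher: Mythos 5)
Your proposal follows essentially the same route as the paper's proof: induction on the number of $321$ and $22$ blocks, a base-case verification that $x_1\neq 0$ and has the tabulated sign in each of the thirteen cases, and an induction step that uses the recurrences of Proposition \ref{k=2 recurrences} (with $a_i-b_i=-1$) to relate $x_{i+1}$ to $x_i$ and propagate the sign while showing $|x_{i+1}|\ge |x_i|$. The one small correction is that the increment need not be a positive integer in every case --- in cases 11--13 the $321$ step gives $x_{i+1}=-x_i$, so $|x_{i+1}|=|x_i|$ --- but since the proposition only asserts that $(x_i)$ is absolutely nondecreasing, this does not affect the conclusion.
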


\begin{corollary}
$\det N = x_{\tau} \neq 0$.  Therefore the family $\Ghat$ is independent.  
\end{corollary}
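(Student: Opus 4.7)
The plan is to piece together the row-reduction framework developed above with the monotonicity statement of Proposition \ref{k=2 x_i nondecreasing}. The first step is to observe that the sequence $(x_i)$ was designed precisely so that $x_\tau$ records the determinant of the residual lower-right block of $N$ that remains after one uses the recurrences of Proposition \ref{k=2 recurrences} to row reduce rows $L_2$ and $L_3$ through each 22 and 321 in the sequence $(s_i)$. All other rows of $N$ come from curves $\mathcal{G}(i)$ with $i < L$, and by Proposition \ref{cij summary} these contribute a $1$ on the diagonal together with zero entries beneath it; thus these rows form an upper-triangular block with unit diagonal and do not affect the determinant. Consequently $\det N$ collapses to the determinant of the residual block, which is $x_\tau$ by the case-by-case definition read off from Figure \ref{thirteen matrices} and Table \ref{data table}.

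The second step is to invoke Proposition \ref{k=2 x_i nondecreasing} directly: since $x_1 \neq 0$ and $|x_{i+1}| \geq |x_i|$ for all $i \geq 1$, we have $|x_\tau| \geq |x_1| > 0$, so $x_\tau \neq 0$. Combined with the first step this yields $\det N \neq 0$.

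Finally, recall that $N$ was obtained by expressing the $g-1$ curves of $\Ghat$ in the basis $\{F_{q,1,1}\}$ of $H_2(\M_{0,n},\Q)^{S_n}$ and then deleting the column indexed by the curve dropped when forming $\Ghat$. Nonvanishing of the determinant of this $(g-1) \times (g-1)$ minor is equivalent to the rows being linearly independent, which in turn is equivalent to the curves in $\Ghat$ being independent in $H_2(\M_{0,n},\Q)^{S_n}$. The independence claim follows.

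The only subtle part of this plan is verifying the first step uniformly across all thirteen ending cases enumerated in Table \ref{data table}: one must confirm that in each case the row reduction really does proceed up to the residual block advertised in Figure \ref{thirteen matrices}, with no stray nonzero entries outside rows $L_2$ and $L_3$ corrupting the upper-triangular structure. This is bookkeeping but not automatic, since it depends on verifying that the tail columns defining $(d_i, e_i, D_i, E_i)$ are indeed the only columns in the upper-right $(L-1) \times (m-L+1)$ block where nonzero entries can survive, a fact that in turn rests on the structural propositions about $\gamma_j$ in Proposition \ref{cij summary}.
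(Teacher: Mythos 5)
Your proposal is correct and follows the same route the paper takes: the surrounding discussion in Section \ref{k=2 independence} already establishes that $N$ is upper triangular with unit diagonal outside the residual block whose determinant is $x_{\tau}$, and the corollary is then an immediate consequence of Proposition \ref{k=2 x_i nondecreasing} ($|x_{\tau}|\geq |x_1|>0$) together with the standard equivalence between nonvanishing of the determinant of the coefficient matrix and linear independence of the curves in $\Ghat$. Your closing caveat about checking the thirteen endings is exactly the bookkeeping the paper delegates to Table \ref{data table} and Figure \ref{thirteen matrices}.
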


\begin{proof}[Proof of Proposition \ref{k=2 x_i nondecreasing}]  We refer to the third block of Table \ref{data table} throughout the discussion below.

The proof is by induction on $i$ (the number of $321$ blocks or $22$ blocks) with $i=1$ as the base case.  

To check the base case, in each case (1)-(13), we can compute $a_1$, $b_1$, $d_1$, $e_1$, $A_1$, $B_1$, $D_1$, and $E_1$ and hence $x_1$, and verify that $x_1 \neq 0$ and has the sign listed in Table \ref{data table}.

For the induction step, we can use the recurrences of Proposition \ref{k=2 recurrences} to establish a relationship between $x_{i+1}$ and $x_i$ when we row reduce through a 22 or 321 respectively.  By carefully analyzing the resulting relations together with the signs of $x_{i+1}$ and $x_i$, we see that $|x_{i+1}| \geq |x_{i}|$.  The result follows.
\end{proof}

\section*{Appendix: Data for the proof of extremality when $k=2$}

\begin{figure}[h] \label{data table}
\caption{Tables of data for the proof of extremality when $k=2$}
\begin{center}
\begin{tabular}{|lllllll|} \hline
Case                                                                     & $s_0$& Ending                       & $L_2$ & $L_3$ & $d$,$D$  & $e$,$E$  \\ 
                                                                                        &          &                                   &           &           &     col.           & col.  \\ \hline
 1 : $n$ even, $j$ odd, $j \leq 5r/4$                                &  321 &$321,321,3\xout{2}1$ & $m\!-\!1$ & $m$ & $m\!-\!1$ & $m$ \\ 
 2 : $n$ even, $j$ odd, $\frac{5}{4}r < j \leq \frac{3}{2}r$ &  321&$22,321,3\xout{2}1$ & $m\!-\!1$& $m$ & $m\!-\!1$ & $m$ \\ 
 3 : $n$ even, $j$ odd, $\frac{3}{2}r < j \leq \frac{5}{2}r$ &        & $321,22,3\xout{2}1$ & $m\!-\!1$ & $m$ & $m\!-\!1$ & $m$ \\
 4 : $n$ even, $j$ odd, $j > \frac{5}{2}r$                          &  22   &$22,22,3\xout{2}1$   & $m\!-\!1$ & $m$ & $m\!-\!1$ & $m$ \\ \hline
 5 : $n$ even, $j$ even, $j \leq 2r$                                   &  &$321,22,3\xout{2}$   & $m\!-\!1$ & $m$ & $m\!-\!2$ & $m\!-\!1$ \\ 
 6 : $n$ even, $j$ even, $j > 2r$                                       &  22  &$22,22,3\xout{2}$     & $m\!-\!1$ & $m$ & $m\!-\!2$ & $m\!-\!1$ \\ \hline
 7 : $n$ odd, $j$ even, $j \leq \frac{4}{3}r$                      &  321 &$321,321,(\xout{3}$ or $\xout{2})$& $m$ & $m\!-\!1$ & $m$ & $m\!-\!1$ \\
 8 : $n$ odd, $j$ even, $\frac{4}{3}r < j \leq 2r$              &   &$22,321,(\xout{2}$ or $\xout{3})$& $m$ & $m\!-\!1$ & $m$ & $m\!-\!1$ \\
 9 : $n$ odd, $j$ even, $2r < j \leq 4r$                             &  22   &$22,321,22,\xout{2}$ & $m$ & $m-3$ & $m$ & $m-3$ \\
10 : $n$ odd, $j$ even, $j >4r$                                  &  22  &$22,321,2^{\delta},\xout{2}$, & $m$ & $m\!-\!\delta\!-\!1$ & $m\!-\!\delta\!-\!1$ & $m$  \\
                                                                                        &        &$\delta = 2 \lceil \frac{j}{2r} \rceil -2$ & & & & \\ \hline
11 : $n$ odd, $j$ odd, $j \leq \frac{5r}{3}$                       & 321 &$321,321,2\xout{2}$  &  $m$ & $m\!-\!2$ & $m\!-\!2$ & $m$ \\
12 : $n$ odd, $j$ odd, $\frac{5r}{3} < j \leq 3r$               &        &$22,321,2\xout{2}$    & $m$  & $m\!-\!2$ & $m\!-\!2$ & $m$  \\
13 : $n$ odd, $j$ odd, $j > 3r$                                         & 22  &$22,321,2^{\delta}\xout{2}$& $m$ & $m\!-\!\delta\!-\!1$ & $m\!-\!\delta\!-\!1$ & $m$ \\ 
                                                                                         &       &$\delta=2\lceil \frac{j-r}{2r}\rceil-1$ & & & & \\ \hline
\end{tabular}
\end{center}
\end{figure}

\newpage
\begin{figure}[h] 
\caption{Tables of data for the proof of extremality when $k=2$, continued}
\begin{center}
\begin{tabular}{|lllllllll|} \hline
Case & $d_0$ & $e_0$ & $D_0$ & $E_0$ & $D_i$ & $E_i$ & $(-1)^{-\beta_i-1} x_i$ & Example $(n,j)$\\\hline 
 1 & 1 & 1 & 0 & 2 & $(-1)^{\beta_i+1}$ & $(-1)^{\beta_i}$      & $-b_i + d_i -e_i+3$ &  \myneturltilde{http://www.math.uga.edu/~davids/agss/n62j21.m2}{$(62, 21)$} \\
 2 & 1 & 1 & 0 & 2 & $(-1)^{\beta_i+1}$ & $(-1)^{\beta_i}$      & $a_i- d_i+e_i-3$      &  \myneturltilde{http://www.math.uga.edu/~davids/agss/n62j23.m2}{$(62, 23)$} \\
 3 & 1 & 2 & 0 & 2 &  $(-1)^{\beta_i+1}$ & 0                         & $a_i-2d_i+e_i-3$     & \myneturltilde{http://www.math.uga.edu/~davids/agss/n66j25.m2}{$(66, 25)$}\\
 4 & 1 & 2 & 0 & 2 & $(-1)^{\beta_i+1}$ & 0                          & $-a_i+2d_i-e_i+4$  &  \myneturltilde{http://www.math.uga.edu/~davids/agss/n62j27.m2}{$(62, 27)$} \\ \hline
 5 & 0 & 1 & 1 & 2 &  $(-1)^{\beta_i+1}$ & $(-1)^{\beta_i+1}$ & $2-d_i+e_i$             &  \myneturltilde{http://www.math.uga.edu/~davids/agss/n62j2.m2}{$(62, 22)$} \\
 6 & 0 & 1 & 1 & 2 & $(-1)^{\beta_i+1}$ & $(-1)^{\beta_i+1}$  & $-2+d_i-e_i$           &  \myneturltilde{http://www.math.uga.edu/~davids/agss/n62j26.m2}{$(62, 26)$}  \\ \hline
 7 & 0 & 1 & 1 & 2 & $(-1)^{\beta_i+1}$ & $(-1)^{\beta_i+1}$  & $-2+d_i-e_i$           &  \myneturltilde{http://www.math.uga.edu/~davids/agss/n63j22.m2}{$(63, 22)$} \\
 8 & 0 & 1 & 1 & 2 & $(-1)^{\beta_i+1}$ & $(-1)^{\beta_i+1}$  & $2-d_i+e_i$             & \myneturltilde{http://www.math.uga.edu/~davids/agss/n65j26.m2}{$(65, 26)$} \\
 9 & 0 & 1 & 1 & 1 & $(-1)^{\beta_i}$    & 0                          & $e_i+3$                   & \myneturltilde{http://www.math.uga.edu/~davids/agss/n63j26.m2}{$(63, 26)$} \\
10& 0 & 1 & 1 & 0 & $(-1)^{\beta_i}$ & $(-1)^{\beta_i+1}$      & $2+d_i+e_i+\delta/2$ &  \myneturltilde{http://www.math.uga.edu/~davids/agss/n61j28.m2}{$(61, 28)$}   \\ \hline
11& 0 & 1 & 1 & 2 & $(-1)^{\beta_i+1}$ & $(-1)^{\beta_i+1}$  & $1-b_i-d_i-e_i$            & \myneturltilde{http://www.math.uga.edu/~davids/agss/n63j23.m2}{$(63, 23)$} \\
12& 0 & 1 & 1 & 2 & $(-1)^{\beta_i+1}$ & $(-1)^{\beta_i+1}$   & $-1+a_i+d_i+e_i$        & \myneturltilde{http://www.math.uga.edu/~davids/agss/n61j25.m2}{$(61, 25)$} \\
13& 0 & 1 & 1 & 0 & $(-1)^{\beta_i}$ & $(-1)^{\beta_i+1}$      & $a_i+3d_i+e_i-(\delta+1)/2$  & \myneturltilde{http://www.math.uga.edu/~davids/agss/n61j27.m2}{$(61, 27)$}\\  \hline
\end{tabular}
\end{center}
\end{figure}
\mbox{} \vspace{12pt}

\begin{figure}[h] 
\caption{Tables of data for the proof of extremality when $k=2$, continued}
\begin{center}
\begin{tabular}{|llll|} \hline
Case & $\operatorname{sign}(x_i)$ &   $x_{i+1}$ & $x_{i+1}$ \\ \hline
1 & $(-1)^{\beta_i+1}$ & $x_i+ (-1)^{\beta_i+1}2$ & $-( x+(-1)^{\beta_i+1})$ \\
2 & $(-1)^{\beta_i}$ &  $x_i+ (-1)^{\beta_i}2$  & $-(x_i+(-1)^{\beta_i})$\\
3 & $(-1)^{\beta_i}$ &  $x_i + (-1)^{\beta_i}2$ & $-(x_i+(-1)^{\beta_i})$\\
4 & $(-1)^{\beta_i+1}$ & $x_i+ (-1)^{\beta_i+1}2$ &   $-(x_i+(-1)^{\beta_i+1})$ \\ \hline
5 & $(-1)^{\beta_i+1}$ & $x_i+ (-1)^{\beta_i+1}$ & $-(x_i+(-1)^{\beta_i+1})$ \\
6 & $(-1)^{\beta_i}$  & $x_i+ (-1)^{\beta_i}$  & $-(x_i+(-1)^{\beta_i})$\\ \hline
7 & $(-1)^{\beta_i}$  & $x_i+ (-1)^{\beta_i}$  & $-(x_i+(-1)^{\beta_i})$\\
8 & $(-1)^{\beta_i+1}$ & $x_i+ (-1)^{\beta_i+1}$ &$-(x_i+(-1)^{\beta_i+1})$ \\
9 & $(-1)^{\beta_i+1}$ & $x_i+ (-1)^{\beta_i+1}$ & $-(x_i+(-1)^{\beta_i+1})$\\
10 & $(-1)^{\beta_i+1}$ & $x_i+ (-1)^{\beta_i+1}$ & $-(x_i+(-1)^{\beta_i+1})$\\ \hline
11 & $(-1)^{\beta_i+1}$ & $x_i+ (-1)^{\beta_i+1}$ & $-x_i$\\
12 & $(-1)^{\beta_i}$ & $x_i+ (-1)^{\beta_i}$ & $-x_i$ \\
13 & $(-1)^{\beta_i}$ & $x_i+ (-1)^{\beta_i}$ & $-x_i$ \\ \hline
\end{tabular}
\end{center}
\end{figure}

\newpage

\begin{figure}[h] \label{thirteen matrices}
\caption{Thirteen matrices which arise in the proof of extremality when $k=2$}
\begin{tabular}{cc}
\begin{tabular}{c}
Case 1:\\
$\left( \begin{array}{ccccc}
1 & 2 & 2 & 1 & -2\\
0 & 1 & 1 & -1 & -1\\
0 & 0& 1& 2 & 0\\
a_{\tau} & b_{\tau} & 0& d_{\tau} & e_{\tau} \\
A_{\tau} & B_{\tau} & 0 & D_{\tau} & E_{\tau} \\
\end{array} \right).$
\end{tabular}
&
\begin{tabular}{c}
Case 2:\\
$\left( \begin{array}{ccccc}
1 & 2 & 1 & -1 & -1\\
0 & 1 & 2 & 0 & -1\\
0 & 0& 1& 2 & 0\\
a_{\tau} & b_{\tau} & 0& d_{\tau} & e_{\tau} \\
A_{\tau} & B_{\tau} & 0 & D_{\tau} & E_{\tau} \\
\end{array} \right).$
\end{tabular}
\\
\mbox{} & \\
\begin{tabular}{c}
Case 3:\\
$\left( \begin{array}{ccccc}
1 & 2 & 2 & 1 & -2\\
0 & 1 & 1 & -1 & -2\\
0 & 0& 1& 1 & -1\\
a_{\tau} & b_{\tau} & 0& d_{\tau} & e_{\tau} \\
A_{\tau} & B_{\tau} & 0 & D_{\tau} & E_{\tau} \\
\end{array} \right).$
\end{tabular}
&
\begin{tabular}{c}
Case 4:\\
$\left( \begin{array}{ccccc}
1 & 2 & 1 & -1 & -2\\
0 & 1 & 2 & 0 & -2\\
0 & 0& 1& 1 & -1\\
a_{\tau} & b_{\tau} & 0& d_{\tau} & e_{\tau} \\
A_{\tau} & B_{\tau} & 0 & D_{\tau} & E_{\tau} \\
\end{array} \right).$
\end{tabular}
\\
\mbox{} & \\
\begin{tabular}{c}
Case 5:\\
$\left( \begin{array}{ccccc}
1 & 2 & 1 & -1 & -4 \\
0 & 1 & 1 & -1 & -2 \\
0 & 0 & 1 & 1 & -1 \\
a_{\tau} & b_{\tau} & d_{\tau} & e_{\tau} & 2e_{\tau} \\
A_{\tau} & B_{\tau} & D_{\tau} & E_{\tau} & 2E_{\tau} 
\end{array} \right)$
\end{tabular}
&
\begin{tabular}{c}
Case 6:\\
$\left( \begin{array}{ccccc}
1 & 2 & 1 & -1 & -2 \\
0 & 1 & 2 & 0 & -2 \\
0 & 0 & 1 & 1 & -1 \\
a_{\tau} & b_{\tau} & d_{\tau} & e_{\tau} & 2e_{\tau} \\
A_{\tau} & B_{\tau} & D_{\tau} & E_{\tau} & 2E_{\tau} 
\end{array} \right)$
\end{tabular}
\\
\mbox{} & \\
\begin{tabular}{c}
Case 7:\\
$\left( \begin{array}{cccc}
1 & 2 & 1 & -1 \\
0 & 1 & 1 & -1 \\
A_{\tau} & B_{\tau} & D_{\tau} & E_{\tau} \\
a_{\tau} & b_{\tau} & d_{\tau} & e_{\tau}
\end{array} \right)$
\end{tabular}
&
\begin{tabular}{c}
Case 8:\\
$\left( \begin{array}{cccc}
1 & 2 & 1 & -1 \\
0 & 1 & 2 & 0 \\
A_{\tau} & B_{\tau} & D_{\tau} & E_{\tau} \\
a_{\tau} & b_{\tau} & d_{\tau} & e_{\tau}
\end{array} \right)$
\end{tabular}
\\
\mbox{} & \\
\begin{tabular}{c}
Case 9:\\
$\left( \begin{array}{cccccc}
1 & 2 & 1 &0  & 0 & -1 \\
0 & 1 & 2 & 1 & 0 & -1 \\
A_{\tau} & B_{\tau} & D_{\tau} & 2D_{\tau} & 2D_{\tau} & E_{\tau} \\
0 & 0 & 0 & 1 & 1 & -1 \\
0 & 0 & 0 & 0 & 1 & 1 \\
a_{\tau} & b_{\tau} & d_{\tau} & 2d_{\tau} &  2d_{\tau} & e_{\tau} \\
\end{array} \right)$
\end{tabular}
&
\small
\begin{tabular}{c}
Case 10, 13:\\
$\left( \begin{array}{cccccccccc}
1 & 2 & 1 &0  & \cdots & & &  & & -1 \\
0 & 1 & 2 & 1 & 0 & \cdots & & & & -1 \\
A_{\tau} & B_{\tau} & D_{\tau} & 2D_{\tau} & 2D_{\tau} & D_{\tau} & 0
& \cdots & 0 & E_{\tau} \\
0 & 0 & 0 & 1 & 1 & 0 & \cdots &  & &  -1 \\
0 & 0 & 0& 0 & 1 & 2 & 1 & 0 & \cdots & -1 \\
0 & 0 & 0 & 0 & 0& 1 & 2 & 1 & \ddots & -1 \\
0 & 0 & 0 & 0& 0 & \ddots & \ddots & \ddots & \ddots & \vdots \\
0 & 0 & 0& 0& 0 & 0 & 0 & 1 & 2 & 0 \\
0 & 0& 0 & 0& 0 & 0 & 0 & 0 & 1 & 1 \\
a_{\tau} & b_{\tau} & d_{\tau} & 2d_{\tau} &  2d_{\tau} & d_{\tau} & 0
& \cdots & 0 & e_{\tau} \\
\end{array} \right)$
\end{tabular}
\normalsize
\\

\mbox{} & \\

\begin{tabular}{c}
Case 11:\\
$\left( \begin{array}{ccccc}
1 & 2 & 1 & -1 & -2 \\
0 & 1 & 1 & 0 & -1 \\
A_{\tau} & B_{\tau} & D_{\tau} & 2D_{\tau} & E_{\tau} \\
0 & 0 & 0 & 1 & 0 \\
a_{\tau} & b_{\tau} & d_{\tau} & 2d_{\tau} & e_{\tau} \\
\end{array} \right)$
\end{tabular}

&

\begin{tabular}{c}
Case 12:\\
$\left( \begin{array}{ccccc}
1 & 2 & 1 & 0 & -1 \\
0 & 1 & 2 & 1 & -1 \\
A_{\tau} & B_{\tau} & D_{\tau} & 2D_{\tau} & E_{\tau} \\
0 & 0 & 0 & 1 & 0 \\
a_{\tau} & b_{\tau} & d_{\tau} & 2d_{\tau} & e_{\tau} \\
\end{array} \right)$
\end{tabular}

\end{tabular}

\end{figure}

\newpage

\newcounter{lastbib}

\section*{References}
\begin{biblist}
	\bibselect{cbrefs}
\end{biblist}
\setcounter{lastbib}{\value{bib}}

\section*{Software Packages Referenced}
\begin{biblist}[\setcounter{bib}{\value{lastbib}}]
	\bib{ConfBlocks}{article}{
		author={Swinarski, David},
		title={\texttt{\upshape ConfBlocks}: a Macaulay 2 package for computing conformal blocks divisors},
		date={2010},
		note={Version 1.0},
	eprint={\neturl{http://www.math.uiuc.edu/Macaulay2/}},
	}
	\bib{Macaulay}{article}{
		author={Grayson, Dan},
		author={Stillman, Mike},
		title={\texttt{\upshape Macaulay 2}: a software system for research in algebraic geometry},
		date={2008},
		note={Version 1.1},
		eprint={\neturl{http://www.math.uiuc.edu/Macaulay2/}}
	}
	\bib{NefWiz}{article}{
		author={Gibney, Angela},
		author={Krashen, Daniel},
		title={\texttt{\upshape NefWiz}: software for divisors on the moduli space of curves},
		date={2006},
		note={Version 1.1}	eprint={\neturltilde{http://www.math.uga.edu/~dkrashen/nefwiz/index.html}{http://www.math.uga.edu/$\sim$dkrashen/nefwiz/index.html}}
	}

	\bib{Polymake}{article}{
		author={Gawrilow, Ewgenij},
		author={Joswig, Michael},
		title={\texttt{\upshape polymake}: a framework for analyzing convex polytopes},
		date={2007},
		note={Version 2.3},
		eprint={\neturl{http://www.math.tu-berlin.de/polymake/}}
	}
\end{biblist}

\end{document}